\definecolor{mydarkblue}{rgb}{0,0.08,0.45}
\newtheorem{theorem}{Theorem}
\crefname{theorem}{theorem}{Theorems}
\Crefname{Theorem}{Theorem}{Theorems}
\newaliascnt{lemma}{theorem}
\newtheorem{lemma}[lemma]{Lemma}
\crefname{lemma}{lemma}{lemmas}
\Crefname{Lemma}{Lemma}{Lemmas}
\newaliascnt{corollary}{theorem}
\newtheorem{corollary}[corollary]{Corollary}
\crefname{corollary}{corollary}{corollaries}
\Crefname{Corollary}{Corollary}{Corollaries}
\newaliascnt{proposition}{theorem}
\newtheorem{proposition}[proposition]{Proposition}
\crefname{proposition}{proposition}{propositions}
\Crefname{Proposition}{Proposition}{Propositions}
\newaliascnt{definition}{theorem}
\crefname{definition}{definition}{definitions}
\Crefname{Definition}{Definition}{Definitions}
\newaliascnt{remark}{theorem}
\newtheorem{remark}[remark]{Remark}
\crefname{remark}{remark}{remarks}
\Crefname{Remark}{Remark}{Remarks}
\crefname{example}{example}{examples}
\Crefname{Example}{Example}{Examples}
\crefname{figure}{figure}{figures}
\Crefname{Figure}{Figure}{Figures}
\newtheorem{assumption}{\textbf{A}\hspace{-3pt}}
\Crefname{assumption}{\textbf{A}\hspace{-3pt}}{\textbf{A}\hspace{-3pt}}
\crefname{assumption}{\textbf{A}}{\textbf{A}}
\Crefname{assumptionG}{\textbf{G}\hspace{-3pt}}{\textbf{G}\hspace{-3pt}}
\crefname{assumptionG}{\textbf{G}}{\textbf{G}}
\Crefname{assumptionQ}{\textbf{Q}\hspace{-3pt}}{\textbf{Q}\hspace{-3pt}}
\crefname{assumptionQ}{\textbf{Q}}{\textbf{Q}}
\newcounter{hypA}
\newtheorem{thm}{Theorem}
\newtheorem{prop}[thm]{Proposition}
\pgfplotsset{width=10cm,compat=1.9}
\def\bfn{\mathbf{n}}
\def\msi{\mathsf{I}}
\def\msa{\mathsf{A}}
\def\tmsa{\tilde{\msa}}
\def\msd{\mathsf{D}}
\def\msk{\mathsf{K}}
\def\mss{\mathsf{S}}
\def\msb{\mathsf{B}} 
\def\msc{\mathsf{C}}
\def\mse{\mathsf{E}}
\def\msm{\mathsf{M}}
\def\msu{\mathsf{U}}
\def\msv{\mathsf{V}}
\def\msy{\mathsf{Y}}
\newcommand{\mcb}[1]{\mathcal{B}(#1)}
\def\mcf{\mathcal{F}}
\def\bmcf{\bar{\mathcal{F}}}
\def\mcp{\mathcal{P}}
\def\rset{\mathbb{R}}
\def\zset{\mathbb{Z}}
\def\nset{\mathbb{N}}
\def\nsets{\mathbb{N}^*}
\def\rmb{\mathrm{b}}
\def\rmd{\mathrm{d}}
\def\rmZ{\mathrm{Z}}
\def\mrd{\mathrm{d}}
\def\rme{\mathrm{e}}
\def\rmn{\mathrm{n}}
\def\mrn{\mathrm{n}}
\def\mrc{\mathrm{C}}
\def\mrcc{\mathrm{c}}
\def\rmc{\mathrm{C}}
\def\rmcc{\mathrm{c}}
\newcommand{\po}{\left(}
\newcommand{\pf}{\right)}
\newcommand{\R}{\mathbb R}
\newcommand{\dd}{\mathrm{d}}
\newcommand{\na}{\nabla}
\newcommand{\loiy}{\mu_{\mathrm{v}}}
\def\MeasFspace{\mathbb{M}}
\newcommandx{\functionspace}[2][1=+]{\mathbb{F}_{#1}(#2)}
\newcommandx{\VarDeux}[3][3=]{\operatorname{Var}^{#3}_{#1}\left\{#2 \right\}}
\newcommand{\1}{\mathbbm{1}}
\newcommand{\LeftEqNo}{\let\veqno\@@leqno}
\newcommand{\floor}[1]{\left\lfloor #1 \right\rfloor}
\newcommand{\N}{\ensuremath{\mathbb{N}}}
\newcommand{\PE}{\mathbb{E}}
\newcommand{\PP}{\mathbb{P}}
\newcommand{\abs}[1]{\left\vert #1 \right\vert}
\newcommand{\tvnorm}[1]{\| #1 \|_{\mathrm{TV}}}
\newcommandx{\Vnorm}[2][1=V]{\| #2 \|_{#1}}
\newcommandx{\VnormEq}[2][1=V]{\left\| #2 \right\|_{#1}}
\newcommandx{\norm}[2][1=]{\ifthenelse{\equal{#1}{}}{\left\Vert #2 \right\Vert}{\left\Vert #2 \right\Vert^{#1}}}
\newcommandx{\normLigne}[2][1=]{\ifthenelse{\equal{#1}{}}{\Vert #2 \Vert}{\Vert #2\Vert^{#1}}}
\newcommand{\parenthese}[1]{\left(#1 \right)}
\newcommand{\parentheseLigne}[1]{(#1 )}
\newcommand{\parentheseDeux}[1]{\left[ #1 \right]}
\newcommand{\defEns}[1]{\left\lbrace #1 \right\rbrace }
\newcommand{\defEnsLigne}[1]{\lbrace #1 \rbrace }
\newcommand{\ps}[2]{\left\langle#1,#2 \right\rangle}
\newcommand{\proba}[1]{\mathbb{P}\left( #1 \right)}
\newcommand{\probaCond}[2]{\mathbb{P}\left( \left. #1  \middle\vert #2 \right.\right)}
\newcommandx\probaMarkovTilde[2][2=]
\newcommand{\expe}[1]{\PE \left[ #1 \right]}
\newcommand{\expeMarkov}[2]{\PE_{#1} \left[ #2 \right]}
\newcommand{\plusinfty}{+\infty}
\def\ie{\textit{i.e.}}
\def\cadlag{càdlàg}
\def\eqsp{\;}
\newcommand{\coint}[1]{\left[#1\right)}
\newcommand{\ocint}[1]{\left(#1\right]}
\newcommand{\ooint}[1]{\left(#1\right)}
\newcommand{\ccint}[1]{\left[#1\right]}
\newcommand{\cointLigne}[1]{[#1)}
\newcommandx{\weight}[2][2=n]{\omega_{#1,#2}^N}
\newcommand{\ball}[2]{\operatorname{B}(#1,#2)}
\def\dist{\operatorname{dist}}
\newcommandx\sequence[3][2=,3=]
\newcommandx\sequenceD[3][2=,3=]
\newcommandx{\sequencen}[2][2=n\in\N]{\ensuremath{\{ #1_n, \eqsp #2 \}}}
\newcommandx\sequenceDouble[4][3=,4=]
\newcommandx{\sequencenDouble}[3][3=n\in\N]{\ensuremath{\{ (#1_{n},#2_{n}), \eqsp #3 \}}}
\newcommand{\wrt}{w.r.t.}
\def\iid{i.i.d.}
\def\eg{e.g.}
\newcommand{\opnorm}[1]{{\left\vert\kern-0.25ex\left\vert\kern-0.25ex\left\vert #1 
    \right\vert\kern-0.25ex\right\vert\kern-0.25ex\right\vert}}
\def\generator{\mathcal{A}}
\def\Id{\operatorname{Id}}
\newcommandx{\CPE}[3][1=]{{\mathbb E}_{#1}\left[#2 \left \vert #3 \right. \right]} 
\newcommandx{\CPVar}[3][1=]{\mathrm{Var}^{#3}_{#1}\left\{ #2 \right\}}
\newcommand{\CPP}[3][]
{\ifthenelse{\equal{#1}{}}{{\mathbb P}\left(\left. #2 \, \right| #3 \right)}{{\mathbb P}_{#1}\left(\left. #2 \, \right | #3 \right)}}
\newcommandx{\osc}[2][1=]{\mathrm{osc}_{#1}(#2)}
\def\Id{\operatorname{Id}}
\def\domain{\mathrm{D}}
\def\martfg{M^{f,g}}
\newcommand\Ddir[1]{D_{#1}}
\def\Refl{\mathrm{R}}
\def\transpose{\operatorname{T}}
\def\v{v}
\def\w{w}
\def\bU{\bar{U}}
\def\lambdab{\bar{\lambda}}
\def\bG{\bar{G}}
\def\S{S}
\def\nE{E}
\def\nF{F}
\def\tmsk{\tilde{\msk}}
\def\tW{\tilde{W}}
\def\yt{\tilde{y}}
\def\ty{\yt}
\def\Mt{\tilde{M}}
\def\tM{\Mt}
\def\tx{\tilde{x}}
\def\tX{\tilde{X}}
\def\tY{\tilde{Y}}
\def\tG{\tilde{G}}
\def\tE{\tilde{E}}
\def\tT{\tilde{T}}
\def\tS{\tilde{S}}
\def\bG{\bar{G}}
\def\bS{\bar{S}}
\def\bH{\bar{H}}
\def\sphere{\mss}
\def\rate{\lambda_{\mathrm{r}}}
\newcommand{\ensemble}[2]{\left\{#1\,:\eqsp #2\right\}}
\def\mrd{\mathrm{D}}
\def\mrc{\mathrm{C}}
\def\lyap{V}
\newcommand\coupling[2]{\Gamma(\mu,\nu)}
\def\supp{\mathrm{supp}}
\def\tpi{\tilde{\pi}}
\newcommand\adh[1]{\overline{#1}}
\renewcommand{\geq}{\geqslant}
\renewcommand{\leq}{\leqslant}
\def\poty{H}
\def\talpha{\tilde{\alpha}}
\def\Leb{\mathrm{Leb}}
\def\iff{ if and only if }
\def\vareps{\varepsilon}
\def\varespilon{\varepsilon}
\def\projd{\operatorname{proj}^{\msd}}
\def\Phibf{\mathbf{\Phi}}
\begin{document}

\title[Geometric ergodicity of the Bouncy Particle Sampler]{Geometric ergodicity of the Bouncy Particle Sampler}
 \author[A. Durmus, A. Guillin, P. Monmarché]{Alain Durmus,  Arnaud Guillin, Pierre Monmarché}

\address{CMLA, ENS Cachan, CNRS, Université Paris-Saclay, 94235 Cachan, France}
\email{alain.durmus@cmla.ens-cachan.fr}

\address{Laboratoire Jacques-Louis Lions and Laboratoire de Chimie Th{\'e}orique, Sorbonne Universit{\'e}}
\email{pierre.monmarche@sorbonne-universite.fr}
\urladdr{https://www.ljll.math.upmc.fr/monmarche/}

\address{Laboratoire de Math\'ematiques Blaise Pascal\\
CNRS - UMR 6620\\
Universit\'e Clermont-Auvergne\\
Avenue des landais,\\
63177 Aubiere cedex, France}
\email{guillin@math.univ-bpclermont.fr}
\urladdr{http://math.univ-bpclermont.fr/$\sim$guillin}



\date{}

\begin{abstract}
  The Bouncy Particle Sampler (BPS) is a Monte Carlo Markov Chain
  algorithm to sample from a target density known up to a
  multiplicative constant. This method is based on a kinetic piecewise
  deterministic Markov process for which the target measure is
  invariant. This paper deals with theoretical properties of
  BPS. First, we establish geometric ergodicity of the associated
  semi-group under weaker conditions than in
  \cite{Doucet2017} both on the target distribution and the velocity
  probability distribution. This result is based on a new coupling of the process
  which gives a quantitative minorization condition and yields more insights on the convergence. In addition, we
  study on a toy model the dependency of the convergence rates on the dimension of
  the state space. Finally, we apply our results to the analysis of simulated annealing algorithms based on BPS. 
  
\end{abstract}


\maketitle

\section{Introduction}
Markov chain Monte Carlo methods is a core requirement in many
applications, \eg~in computational statistics
\cite{green:latuszynski:pereyra:robert:2015}, machine learning
\cite{andrieu:defreitas:doucet:jordan:2003}, molecular dynamics
\cite{binder:heermann:reolofs:mallinckrodt:mckay:1993}.   These methods
are used to get approximate samples from a target distribution
denoted $\pi$, with density \wrt~the Lebesgue measure given for all $x
\in \rset^d$ by 
\begin{equation}
  \label{eq:density_pi}
\pi(x) = \exp(-U(x)) \eqsp,
\end{equation}
for a potential $U : \rset^d \to \rset$, known up to an additive constant. 
They rely on the construction of Markov chains which are ergodic with respect to $\pi$, see
\cite{tierney:1994}.

While the first and best-known  MCMC methods
are based on reversible chains, such as many Metropolis-Hastings
type algorithms \cite{metropolis:1953}, there has been since the last
decade an increasing interest in non-reversible discrete-time
processes
\cite{Diaconis2000,BierkensFearnheadRoberts,PetersdeWith,Volte-Face}. 
Indeed, consider a Markov chains
$(X_k)_{k \in \nset}$ on the state space $\{1,\ldots,n\}$. If $(X_k)_{k \in \nset}$ is reversible, for any $n \in \nset$, the event
$\{X_{n+2} = X_n\}$ has a positive probability,  which explains  why  reversible processes typically used in MCMC show  a diffusive behaviour,
covering a distance $\sqrt{K}$ after $K$ iterations. This makes the
exploration of the space slow and affects the efficiency of the
algorithm.  One of the first attempt to avoid this diffusive behaviour
has been proposed in \cite{Neal2004}, where the author suggests to
modify the transition matrix $\mathbf{M}$ of $(X_k)_{k \in \nset}$, reversible with respect to $\mu$, in such way that the
obtained transition matrix is non-reversible but still leaves $\mu$
invariant. By definition of $\tilde{\mathbf{M}}$, the
probability of backtracking is smaller than for $\mathbf{M}$, \ie~$\tilde{\mathbf{M}}^2_{i,i} \leq \mathbf{M}^2_{i,i}$ for any $i \in \{1,\ldots,n\}$.  In addition, \cite{Neal2004} shows that the asymptotic variance of $\tilde{\mathbf{M}}$ is always smaller than the one of $\mathbf{M}$.

For general state space and in particular in order to sample from $\pi$ defined by \eqref{eq:density_pi}, a now popular idea to construct non-reversible Markov chain is based on lifting, see \cite{Diaconis2000} and the references therein. The idea is to extend the state space $\rset^d$ and consider a Markov chain $(X_k,Y_k)_{k \in \nset}$ on $\rset^d \times \msy$, $\msy \subset \rset^d$,  which admits an invariant
distribution for which the first marginal is the probability measure of interest.
It turns out that, appropriately scaled, some of these lifted chains
converge to continuous-time Markov processes. For instance, the
persistent walk on the discrete torus introduced in
\cite{Diaconis2000} converges to the integrated telegraph on the
continuous torus \cite{Volte-Face}, while the lifted chain defined in
\cite{turitsyn:cherkov:vucelja:2011} for spin models converges to the
Zig-zag process \cite{bierkens:roberts:2017} (see also the event-chain
MC with infinitesimal steps in the physics literature
\cite{michel:kapfer:krauth:2014,PetersdeWith}). In these cases, the
continuous-time limits belong to the class of velocity jump processes
$(X_t,Y_t)_{t \geq 0}$ on $\rset^{d} \times \msy$,
$\msy \subset \rset^d$,  satisfying
$X_t = X_0 + \int_0^t Y_s \rmd s$ for all $t \geq 0$ with
$(Y_t)_{t \geq 0}$ piecewise-constant on random time intervals. The
velocity $(Y_t)_{t \geq 0}$ acts as an instantaneous memory, or
inertia, so that $(X_t)_{t \geq 0}$ tends to continue in the same direction
for some time instead of backtracking. In addition, these processes
may be designed to target a given probability measure defined on
$(\rset^{d} \times \msy,\mcb{\rset^{d} \times \msy})$ of the form
\begin{equation}
  \label{eq:def_tilde_pi}
  \tpi = \pi \otimes \loiy \eqsp,
\end{equation}
where $\loiy$ is a probability measure on $\msy$, and therefore can be used as MCMC samplers.   
This kind of dynamics, which are not new \cite{kac:1974,goldstein:1951}, have regained a particular interest in the last decade, in two separate fields:  stochastic algorithms, as we presented, but also biological modelling, where they model the motion of a bacterium \cite{erban:othmer:2004,Calvez,FontbonaGuerinMalrieu2016} and are sometimes called run-\&-tumble processes.


From a numerical point of view, an advantage of these continuous-time
processes is that, under appropriate conditions on the potential $U$,
an exact simulation is possible, following a thinning strategy
\cite{lewis:shedler:1979,Doucet2015,Thieullen2016}. Therefore, no
discretization schemes are needed to approximate the continuous time
trajectory, contrary to Langevin diffusions or Hamiltonian
dynamics. As a consequence, no Metropolis filter is necessary to
preserve the invariance of $\pi$, see
\cite{rossky:doll:friedman:1978,duane:1987,neal:2011,roberts:tweedie:1996} and the reference therein.

This work deals with the velocity jump process introduced in
\cite{PetersdeWith,MonmarcheRTP}. Following \cite{Doucet2015}, we
refer to it as the Bouncy Particle Sampler (BPS). The aim of this
paper is to establish geometric convergence to equilibrium for the
BPS in dimension larger than 1. As detailed below,
we relax the conditions of \cite{Doucet2017}, in particular we show that
any constant refreshment rate is sufficient for thin tail target distributions.
The paper is organized as follows.
\Cref{sec:main-results} presents the BPS process and our main
results, which are proven in
\Cref{sec:proof_main_result}. Finally, \Cref{sec:miscellaneous} is devoted to
a discussion on our result and approach. First, in
\Cref{sec:prec-expl-bound}, we give explicit bound for a toy model,
paying a particular attention to the dependency on the dimension of the
state space in the constants we get. Second, in
\Cref{sec:metast-regime-anne}, we apply our results to study the
annealing algorithm based on the BPS, extending the results
of \cite{MonmarcheRTP}. Some technical proofs are postponed to an Appendix.

Although the work is restricted to the BPS, our arguments
can easily be adapted to other velocity jump processes,
such as randomized variants of the BPS. In particular, the coupling
argument in \Cref{sec:couplage} applies as soon as the process
admits a refreshment mechanism.

\subsection*{Notations}
For all $a,b \in \rset$, we denote $a_+= \max(0,a)$, $a\vee b =
\max(a,b)$, $a \wedge b = \min(a,b)$. $\Id$ stands for the identity
matrix on $\rset^d$. 

For all $x,y \in \rset^d$, the scalar product between $x$ and $y$ is
denoted by $\ps{x}{y}$ and the Euclidean norm of $x$ by $\norm{x}$.
We denote by $\sphere^d = \ensemble{\v \in \rset^d}{\norm{\v} = 1}$,
the $d$-dimensional sphere with radius $1$ and for all $x \in
\rset^d$, $r >0$, by $\ball{x}{r}=\ensemble{\w \in \rset^d}{\norm{w-x}
  \leq r}$ the ball centered in $x$ with radius $r$.  For any
$d$-dimensional matrix $M$, define by $\norm{M} = \sup_{\w \in
  \ball{0}{1}}  \norm{M \w} $ the operator norm associated with
$M$.


Denote by $\mrc(\rset^d)$
 the set of continuous function from $\rset^d$ to $\rset$
and for all $k \in \nsets$, $\mrc^k(\rset^d)$ the set of $k$-times
continuously differentiable function from $\rset^d \to \rset$. Denote for
all $k \in \nset$, $\mrc^{k}_c(\rset^d)$ and $\mrc^{k}_b(\rset^d)$ the set
of functions belonging to $\mrc^k(\rset^d)$ with compact support and
the set of bounded functions belonging to $\mrc^k(\rset^d)$ respectively.
For all function $f : \rset^d \to \rset$, we denote by $\nabla f$ and
$\nabla^2 f$, the gradient and the Hessian of $f$ respectively, if
they exist. For all function $F : \rset^d \to \rset^m$ and compact set
$\msk \subset \rset^d$, denote
$\norm{F}_{\infty} = \sup_{x \in \rset^d} \norm{F(x)}$,
$\norm{F}_{\infty, \msk} = \sup_{x \in \msk} \norm{F(x)}$.  We denote
by $\mcb{\rset^d}$ the Borel $\sigma$-field of and $\mcp(\rset^d)$ the set
of probability measures on $\rset^d$. For $\mu,\nu \in \mcp(\rset^d)$,
$\xi \in \mcp(\rset^d \times \rset^d)$ is called a transference plan between $\mu$
and $\nu$ if for all $\msa \in \mcb{\rset^d}$,
$\xi( \msa \times \rset^d) = \mu(\msa)$ and
$\xi(\rset^d \times \msa) = \nu(\msa)$. The set of transference plan
between $\mu$ and $\nu$ is denoted $\coupling{\mu}{\nu}$.  The random
variables $X$ and $Y$ on $\rset^d$ are a coupling between $\mu$ and $\nu$
if the distribution of $(X,Y)$ belongs to $ \coupling{\mu}{\nu}$.  The
total variation norm between $\mu$ and $\nu$ is defined by
\begin{equation*}
  \tvnorm{\mu-\nu} = 2 \inf_{\xi \in \coupling{\mu}{\nu}} \int_{\rset^d \times \rset^d} \1_{\Delta_\rset^d} (x,y) \, \rmd \xi(x,y) \eqsp, 
\end{equation*}
where $\Delta_{\rset^d} = \ensemble{(x,y) \in \rset^d \times \rset^d}{x=y}$. For
$\lyap : \rset^d \to \coint{1,\plusinfty }$, define the
$\lyap$-norm between $\mu$ and $\nu$ by
\begin{equation*}
  \Vnorm[\lyap]{\mu-\nu} =\sup \ensemble{\abs{\int_{\rset^d} f \rmd \mu - \int_{\rset^d} f\rmd \nu }} {\text{$f : \rset^d\to \rset$, \, $  \norm{f/\lyap}_{\infty} < 1$}} \eqsp. 
\end{equation*}
When $\lyap(x) = 1$ for all $x \in \rset^d$, the $\lyap$-norm is simply the total variation norm.
For all $\mu \in \mcp(\rset^d)$, define the support of $\mu$ by
\begin{equation*}
  \supp \, \mu = \adh{\ensemble{x \in \rset^d}{ \text{ for all open set } \msu \ni x,\,  \mu(\msu) >0}} \eqsp.
\end{equation*}

In the sequel, we take the convention that $\inf \emptyset = \plusinfty$. 


\section{Geometric convergence of the BPS}
\label{sec:main-results}


\subsection{Presentation of the BPS}
\label{sec:presentation-bps}
In all this work, we assume that the potential $U$, given by
\eqref{eq:density_pi}, is continuously differentiable on $\rset^d$.
Let $\msy \subset\rset^d$ be a closed $\mrc^{\infty}$-submanifold $\msy \subset \rset^d$, which is rotation invariant, \ie~for any rotation $O \in \rset^{d \times d}$, $O \msy=\msy$. 
The BPS process $(X_t,Y_t)_{ t\geq 0}$ associated with $U$ evolves on
$(\rset^d \times \msy, \mcb{\rset^d \times \msy})$ and is
defined as follows.

Consider some initial point $(x,y) \in \rset^{d} \times \msy$, and a
family of \iid~random variables $(E_i, F_i, G_i)_{i \in \nset^*}$
on the same probability space $(\Omega,\mcf,\mathbb{P})$, where for
all $i \in \nset^*$, $E_i, F_i$ are exponential random variables
with parameter $1$, $G_i$ is a random variable with a given
distribution $\loiy$ on $(\msy,\mcb{\msy})$, referred to as the
refreshment distribution. In addition, for all $i \in \nset^*$, $E_i$, $F_i$ and
$G_i$ are independent. Let $\rate >0$, referred to as the refreshment rate, $(X_0, Y_0) = (x,y)$ and $S_0=0$. We define by recursion the jump times of the process and
the process itself. Assume that $S_n$ and $(X_t,Y_t)_{t \leq S_n}$ have been defined for $n \geq 0$. Consider 
\begin{eqnarray}
T_{n+1}^{(1)} &=&   E_{n+1}/ \rate \notag\\
\label{eq:def-temps-bounce}
T_{n+1}^{(2)} &=& \inf\ensemble{t \geq 0}{\int_0^t \ps{Y_{\S_n}}{\nabla U (X_{\S_n}+s Y_{\S_n})}_+  \rmd s \geq F_{n+1}} \\
T_{n+1} &=& T_{n+1}^{(1)} \wedge T_{n+1}^{(2)} .\notag
\end{eqnarray}
Set $S_{n+1} = S_n + T_{n+1}$, $(X_t,Y_t) = ( X_{\S_n} + t Y_{\S_n},Y_{\S_n})$,  for all $t \in \cointLigne{S_n,S_{n+1}}$,
$X_{S_{n+1}} = X_{\S_n} + T_{n+1} Y_{\S_n} $ and 
\begin{equation*}
 Y_{\S_{n+1}} = 
  \begin{cases}
     G_{n+1} &  \text{ if $T_{n+1} = T_{n+1}^{(1)}$} \\
 \Refl(X_{\S_{n+1}} , Y_{\S_n}) & \text{ otherwise} \eqsp,
  \end{cases}
\end{equation*}
where $\Refl :
\rset^{2d} \to \rset^d$ is the function given for all $x,y \in
\rset^d$ by
\begin{equation}
  \label{eq:definition_Refl}
  \begin{aligned}
  \Refl(x,y) &=   y - 2\ps{y}{\rmn(\nabla U(x))} \rmn(\nabla U(x)) \eqsp,
  \\
  \, \text{ where for all $z \in\rset^d$} \, \eqsp, \, \rmn(z) & = 
\begin{cases}
z/\norm{z} & \text{ if $z \not =0$} \\
0 & \text{ otherwise } \eqsp.
\end{cases}
\end{aligned}
\end{equation}
Note that for all $(x,y) \in \rset^{2d}$ with $\nabla U(x) \not = 0$,
$\Refl(x,y)$ is the reflection of $y$ orthogonal to $\nabla U(x)$ and
therefore for all $(x,y) \in \rset^{2d}$,
$\norm{\Refl(x,y)} = \norm{y}$.

If $T_{n+1} = T_{n+1}^{(1)}$, we say that, at time $T_{n+1}$, the velocity has been refreshed, and we call $T_{n+1}$ a refreshment time. If $T_{n+1} = T_{n+1}^{(2)}$, we say that, at time $T_{n+1}$, the process has bounced, and we call $T_{n+1}$ a bounce time.

Then, $(X_t,Y_t)$  is defined for all $t < \sup_{n \in \nset} \S_n$ and we
set for all $t \geq \sup_{n \in \nset} \S_n$, $(X_t,Y_t) =
\infty$, where $\infty$ is a cemetery point.


In fact, it is proven in \cite[\Cref{prop:non-explosion_BPS}]{DurmusGuillinMonmarche:toolbox} that almost surely, $\sup_{n \in \nset} S_n =
  \plusinfty$. Therefore, almost surely, $(X_t,Y_t)_{t \geq 0}$ is a $(\rset^{d} \times \msy)$-valued \cadlag~process. By \cite[Theorem
25.5]{davis:1993}, the BPS process $(X_t,Y_t)_{t \geq 0}$ defines
a strong Markov semi-group $(P_t)_{t\geqslant 0}$ given for all
$(x,y)\in \rset^d \times \msy$ and $\msa \in \mcb{\rset^d \times \msy}$ by
\begin{equation*}
P_t ((x,y),\msa)  =  \proba{(X_t,Y_t) \in \msa} \eqsp,
\end{equation*}
where $\sequenceD{X_t,Y_t}[t][ \rset_+]$ is the BPS process started
from $(x,y)$. 

Consider the following basic assumption.
\begin{assumption}
  \label{assum:hyp_base}
  The potential $U$ is twice continuously differentiable, $\loiy$ is rotation invariant
  and $(x,y) \mapsto \norm{y}\norm{\nabla U(x)}$ is integrable with respect
  to $\tilde{\pi}$ defined by \eqref{eq:def_tilde_pi}.
\end{assumption}
It is shown in \cite[\Cref{prop:invarince-BPS}]{DurmusGuillinMonmarche:toolbox}, and contrary to the popular belief it
is quite technical and difficult, that under \Cref{assum:hyp_base}, the probability measure $\tpi$ defined by \eqref{eq:def_tilde_pi}  is invariant for $(P_t)_{t \geq 0}$, \ie~$\tpi P_t = \tpi$ for all $t \geq 0$.


\subsection{Main results}
\label{sec:main-results}

For $\lyap : \rset^{d} \times \msy \to \coint{1,\plusinfty }$,  the semi-group $(P_t)_{t \geq 0}$ with invariant measure $\tpi$ is said to be $\lyap$-uniformly
  geometrically ergodic if there exist $C,\rho>0$ such that for all
  $t\geq 0$ and all $\mu \in \mcp(\rset^d \times \msy)$ with $\mu(\lyap) < \plusinfty$, it holds
\begin{equation}
\label{eq:def_unif_V_geo_ergo}
  \Vnorm[\lyap]{ \mu P_t - \tpi }  \leq C \rme^{-\rho t} \mu(\lyap) \eqsp.
\end{equation}
We state in this section our main results regarding the $\lyap$-uniform geometric ergodicity of the BPS. 

Our basic assumptions to prove geometric ergodicity are the following.
\begin{assumption}
  \label{ass:geo_ergo_1}
  \begin{enumerate}[label=(\roman*)]
  \item   The potential $U$ is positive and satisfies $\int_{\rset^d} \exp\parenthese{  - U(x)/2} \rmd x  < \plusinfty $ and $\lim_{\norm{x} \to \plusinfty} U(x) = \plusinfty$.
  \item   \label{ass:geo_ergo_1_loiy} $\loiy$ admits a density \wrt~the Lebesgue measure on
    $\rset^d$ or there exists $r_0 >0$ such that
    $\loiy(r_0 \sphere^d) >0$.
  \end{enumerate}
  \end{assumption}
Here, we establish practical conditions on the potential $U$, $\loiy$ and $\msy$ implying that $(P_t)_{t \geq 0}$ is 
$\lyap$-uniformly geometrically ergodicity. In fact, these conditions are derived from a more general result. However, since its assumptions and  statement may seem very intricate, for the sake of clarity we have decided to give this result after its corollaries.

Consider the following alternative conditions, which will be used in the case where $\msy$ is bounded. 
 \begin{assumption}
    \label{ass:geo_ergo_1bis}
 The potential $U$ satisfies 
 \[\lim_{\norm{x} \to \plusinfty} \norm{\nabla U(x)} = \infty\eqsp,\quad \sup_{x\in\rset^2} \norm{\nabla^2U(x)} <\infty\eqsp.\]
  \end{assumption}

  \begin{assumption}
    \label{ass:geo_ergo_2}
 There exists $\varsigma \in \ooint{0,1}$ such that
    \begin{equation*}
      \liminf_{\norm{x} \to \plusinfty} \defEns{ \norm{\nabla U(x)}/\, U^{1-\varsigma}(x) } >0 \eqsp, \, \limsup_{\norm{x} \to \plusinfty} \defEns{\norm{\nabla U(x)}/\, U^{1-\varsigma/2}(x)} < \plusinfty \eqsp,
    \end{equation*}
    \begin{equation*}
       \limsup_{\norm{x} \to \plusinfty} \defEns{\norm{\nabla^2 U(x)}/\, U^{1-\varsigma}(x)} < \plusinfty \eqsp.
    \end{equation*}
  \end{assumption}

\begin{assumption}
 \label{ass:geo_ergo_2_b}
 The potential $U$ satisfies $    \lim_{\norm{x}\to \plusinfty} \norm{\nabla^2 U(x)}/\norm{\nabla U(x)} = 0 $ and there exists $\varsigma \in \ooint{0,1}$ such that 
 \begin{equation*}
\liminf_{\norm{x}\to \plusinfty } \norm{\nabla U(x)}/ U^{1-\varsigma}(x) >0   \text{ and }  \, \, \lim_{\norm{x} \to \plusinfty} \norm{\nabla U(x)}/ U^{2(1-\varsigma)}(x) = 0 \eqsp.
 \end{equation*}
\end{assumption}
Note that \Cref{ass:geo_ergo_2_b} is similar to \Cref{ass:geo_ergo_2} but these two conditions are different: none of them implies the other. Indeed, on $\rset^2$, consider $U(x_1,x_2)= (1+|x_1|^2)^{\alpha/2} + (1+|x_2|^2)^{\beta/2}$ for some $\alpha,\beta>1$. Then for all $(x_1,x_2) \in \rset^2$, we have
\begin{align*}
  \nabla U(x) &= [\alpha x_1 (1+x_1^2)^{\alpha/2-1}, \beta x_2 (1+x_2^2)^{\beta/2-1}]^{\transpose}  \\
  \nabla^2 U(x) & = \begin{pmatrix}
F(\alpha,x_1)& 0 \\
    0 & F(\beta,x_2)
  \end{pmatrix} \\
 \text{where } F(\alpha,x_1)  & =     \alpha (1+x_1^2)^{\alpha/2-1} + 2 \alpha x_1^2 (\alpha/2-1)  (1+x_1^2)^{\alpha/2-2}  \eqsp. 
\end{align*}
In that case \Cref{ass:geo_ergo_2}  is satisfied \iff $\ccint{(\alpha \vee \beta)/2,  \alpha\wedge  \beta} \neq \emptyset $, while \Cref{ass:geo_ergo_2_b}  is satisfied \iff $\ccint{2 (\alpha\vee \beta)/(1+\alpha\vee \beta),\alpha \wedge \beta} \neq \emptyset $, chosing in both cases $\varsigma^{-1}>1$ in the corresponding interval. In particular, if both $\alpha,\beta\geqslant 2$, then \Cref{ass:geo_ergo_2_b}  is satisfied, but \Cref{ass:geo_ergo_2} may not (if $\alpha>2\beta$ for instance). On the contrary if, say, $\alpha = 4/3$ and $\beta \in (1,8/7)$, then \Cref{ass:geo_ergo_2}   holds while \Cref{ass:geo_ergo_2_b}  does not.

\begin{theorem}
  \label{theo:V_geo_ergo_loi_borne}
  Assume \Cref{assum:hyp_base}, \Cref{ass:geo_ergo_1}, $\msy$ is bounded  and either  \Cref{ass:geo_ergo_1bis}, \Cref{ass:geo_ergo_2}  or \Cref{ass:geo_ergo_2_b}.
  In the case where \Cref{ass:geo_ergo_1bis} holds, set $\varsigma = 1$. Then,  for any refreshment rate $\rate >0$, there exists $\kappa\in(0,1]$ such that $(P_t)_{t \geq 0}$ is $\lyap$-uniformly geometrically ergodic with $\lyap : \rset^d \times \msy \to \coint{1,\plusinfty}$ given for all $(x,y) \in \rset^d \times \msy$ by $\lyap(x,y) = \exp\parenthese{\kappa U^{\varsigma}(x)} $.
\end{theorem}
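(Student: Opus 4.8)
The plan is to establish the two hypotheses of a Harris--type geometric ergodicity theorem for the semi-group $(P_t)_{t\geq 0}$: a geometric Foster--Lyapunov drift inequality for a function comparable to $\lyap(x,y)=\exp(\kappa U^{\varsigma}(x))$, and a uniform minorization condition on the sub-level sets $\{\lyap\leq R\}$ (this is the way the statement is best seen as a corollary of the general ergodicity result of the paper, whose hypotheses would then be verified under the assumptions at hand). The sub-level sets $\{\lyap\leq R\}$ are compact because $U$ is continuous with $U(x)\to+\infty$ as $\norm{x}\to\infty$ by \Cref{ass:geo_ergo_1} and $\msy$ is bounded and closed; and for $\kappa$ small enough one has $\tpi(\lyap)=\int_{\rset^d}\exp(\kappa U^{\varsigma}(x)-U(x))\,\rmd x<\infty$ by comparison with $\int_{\rset^d}\exp(-U(x)/2)\,\rmd x<\infty$ from \Cref{ass:geo_ergo_1}, so the $\lyap$-norm convergence \eqref{eq:def_unif_V_geo_ergo} is meaningful.

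For the drift, the extended generator $\mathcal{L}$ of the BPS acts by
\[
\mathcal{L}f(x,y)=\ps{y}{\nabla_x f(x,y)}+\ps{y}{\nabla U(x)}_+\bigl(f(x,\Refl(x,y))-f(x,y)\bigr)+\rate\int_{\msy}\bigl(f(x,y')-f(x,y)\bigr)\loiy(\rmd y').
\]
Applied to $\lyap$, which does not depend on $y$, both jump terms vanish and $\mathcal{L}\lyap(x,y)=\kappa\varsigma\,U^{\varsigma-1}(x)\,\ps{y}{\nabla U(x)}\,\lyap(x,y)$, whose sign is that of $\ps{y}{\nabla U(x)}$; so $\lyap$ alone is not a Lyapunov function. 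The remedy is a twisted function $\widehat{\lyap}(x,y)=\lyap(x,y)\,\psi(x,y)$, with $\psi$ built from the alignment $\ps{\rmn(\nabla U(x))}{y}$ of the velocity with the gradient and bounded above and below by positive constants --- here boundedness of $\msy$ is essential, since it forces $\widehat{\lyap}\asymp\lyap$ so that a drift for $\widehat{\lyap}$ transfers to $\lyap$ with modified constants. In $\mathcal{L}\widehat{\lyap}$ the indefinite transport term $\kappa\varsigma U^{\varsigma-1}\ps{y}{\nabla U}\psi$ must be absorbed, when the velocity points uphill, by the bounce term $\ps{y}{\nabla U}_+(\psi(x,\Refl(x,y))-\psi(x,y))$, which is made negative by choosing $\psi$ so that a bounce --- reversing the gradient component of the velocity --- decreases it, the remaining corrections $\ps{y}{\nabla_x\psi}$ being of order $\norm{\nabla^2 U}/\norm{\nabla U}$. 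The technical assumptions enter exactly here: \Cref{ass:geo_ergo_2}, \Cref{ass:geo_ergo_2_b} (and, with $\varsigma=1$, \Cref{ass:geo_ergo_1bis}, using $\sup\norm{\nabla^2 U}<\infty$ and $\norm{\nabla U}\to\infty$) calibrate the growth of $\norm{\nabla U}$ and $\norm{\nabla^2 U}$ against powers of $U$ so that, outside a large compact set $K$, one obtains $\mathcal{L}\widehat{\lyap}\leq -c\,\widehat{\lyap}+d\,\1_{K}$ for some $c,d>0$.

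For the minorization, one uses the coupling of the BPS described in \Cref{sec:couplage}: from any two initial conditions in $K$ (enlarged if needed), one builds a coupling of the two trajectories which, with probability bounded below uniformly over $K$, brings both copies to the same state by a fixed time $t_0>0$. The refreshment mechanism is what makes this possible --- with positive probability both velocities can be reset to suitably chosen values that drive the positions together --- and this is where the second part of \Cref{ass:geo_ergo_1} (a Lebesgue density for $\loiy$, or a positive mass on a sphere), together with rotation invariance of $\loiy$, is used. This yields $P_{t_0}((x,y),\cdot)\geq\epsilon\,\nu(\cdot)$ for all $(x,y)\in K$, for some $\epsilon>0$ and a probability measure $\nu$. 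Feeding the drift and minorization into Harris's theorem produces constants $C,\rho>0$ with $\Vnorm[\lyap]{\mu P_t-\tpi}\leq C\rme^{-\rho t}\mu(\lyap)$ for all $t\geq 0$ and all $\mu$ with $\mu(\lyap)<\infty$.

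I expect the drift step to be the main obstacle. Since any function of $x$ alone is annihilated by the jump parts of $\mathcal{L}$, one is forced to work with $\widehat{\lyap}$ and to exploit the bounce dynamics quantitatively; arranging that the gain of the transport term along uphill velocities is dominated by the loss of the bounce term is precisely what dictates the detailed growth conditions of \Cref{ass:geo_ergo_2} and \Cref{ass:geo_ergo_2_b}, and the reduction from $\widehat{\lyap}$ back to the clean Lyapunov function $\exp(\kappa U^{\varsigma}(x))$ is exactly what requires $\msy$ to be bounded.
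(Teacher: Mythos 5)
Your overall architecture (Foster--Lyapunov drift for a velocity-twisted version of $\exp(\kappa U^{\varsigma})$, minorization on compacts via a refreshment coupling, then Harris' theorem) is the paper's route, which packages it as the verification of \Cref{ass:geo_erg_general} and \eqref{eq:condition_geo_4} so as to invoke \Cref{theo:geo_ergo_gene}; your minorization step is fine. The gap is in the drift, where you commit to a twist $\psi$ that is a bounded (Lipschitz) function of the \emph{normalized} alignment $\ps{\rmn(\nabla U(x))}{y}$. This choice fails already in the simplest case \Cref{ass:geo_ergo_1bis}. Indeed, relative to $\widehat{\lyap}=\rme^{\kappa U^{\varsigma}}\psi$, at a point with small positive alignment $s=\ps{\rmn(\nabla U(x))}{y}$ the transport term contributes $+\kappa\varsigma U^{\varsigma-1}(x)\norm{\nabla U(x)}\,s\,$ (up to the bounded factor $\psi$), the bounce term contributes at most $-\norm{\nabla U(x)}\,s\,\bigl(\psi(s)-\psi(-s)\bigr)=O\bigl(\norm{\nabla U(x)}s^{2}\bigr)$, and the refreshment and $\ps{y}{\nabla_x\psi}$ corrections are bounded. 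Optimizing over $s\in(0,r_0]$ (such $y\in\msy$ exist by rotation invariance) leaves a positive remainder of order $\kappa^{2}U^{2(\varsigma-1)}(x)\norm{\nabla U(x)}$, which under \Cref{ass:geo_ergo_1bis} ($\varsigma=1$, $\norm{\nabla U}\to\infty$) tends to infinity for every fixed $\kappa>0$, and under \Cref{ass:geo_ergo_2} does so whenever $\varsigma>2/3$ and $\norm{\nabla U}$ is near its allowed upper bound $U^{1-\varsigma/2}$. So the asserted inequality $\generator\widehat{\lyap}\le -c\,\widehat{\lyap}+d\,\1_{K}$ is false for your class of twists. This is exactly why the paper takes the argument of the twist to be $\ell(x)\ps{y}{\nabla\bU(x)}$ with $\bU=U^{\varsigma}$ and $\ell\equiv1$ (unnormalized) under \Cref{ass:geo_ergo_1bis} and \Cref{ass:geo_ergo_2}, reserving the normalized choice $\ell=1/(1+\norm{\nabla\bU})$ for \Cref{ass:geo_ergo_2_b}, where $\norm{\nabla U}=o(U^{2(1-\varsigma)})$ keeps the above remainder bounded: with the unnormalized argument, the region where the bounce cannot help is $\{|\ps{y}{\nabla\bU}|\lesssim1\}$, on which the transport gain is automatically $O(\kappa)$.

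A second, related omission: even with the correct twist, transport-versus-bounce alone cannot close the drift. Where $\ps{y}{\nabla U(x)}\approx0$ both of these terms are negligible, and the only source of strict negativity is the refreshment term $\rate\bigl(\int_{\msy}\psi\,\rmd\loiy-\psi\bigr)$, which you use only for the minorization. One must design $\psi$ so that its value at zero alignment strictly exceeds its $\loiy$-average; this is where \Cref{ass:geo_ergo_1}-\ref{ass:geo_ergo_1_loiy} and rotation invariance of $\loiy$ enter the drift (through a lower bound of the form $\loiy(\{w:\,w_1\le-r\})\ge\delta/2$), and it is the role of the plateau structure of $\varphi$ in \eqref{eq:def_varphi_1} and of the intermediate cases in the proof of \Cref{lem:bound_J}. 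Without this ingredient, and without letting the twist depend on the magnitude of $\nabla U$ and not only on its direction, the drift step of your proposal does not go through.
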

\begin{proof}
The proof is postponed to \Cref{sec:autre-demo_1}.
\end{proof}

Note that \Cref{ass:geo_ergo_1bis}, \Cref{ass:geo_ergo_2} and \Cref{ass:geo_ergo_2_b} all require that $\lim_{\norm{x} \to \plusinfty} \norm{\nabla U(x)} = \plusinfty$. We consider now the case where $\liminf_{\norm{x} \to \plusinfty} \norm{\nabla U(x)} < \plusinfty$ possibly.

\begin{assumption}
    \label{ass:geo_ergo_5}
    The potential $U$ satisfies
    \begin{equation*}
      \text{$\liminf_{\norm{x} \to \plusinfty} \norm{\nabla U(x)} >0$ \, and \,  $\lim_{\norm{x} \to \plusinfty} \norm{\nabla^2U(x)} = 0$}\eqsp.
    \end{equation*}
\end{assumption}
\begin{theorem}
  \label{theo:V_geo_ergo_lambda_0}
  Assume \Cref{assum:hyp_base}, \Cref{ass:geo_ergo_1}, \Cref{ass:geo_ergo_5} and $\msy$ is bounded. Then, there exists $\lambda_0>0$ such that, if $\rate \in \ocint{0, \lambda_0}$, $(P_t)_{t \geq 0}$ is $\lyap$-uniformly geometrically ergodic with $\lyap : \rset^d \times \msy \to \coint{1,\plusinfty}$ given for all $(x,y) \in \rset^d \times \msy$ by $\lyap(x,y) = \exp(\kappa U(x))$, for $\kappa \in \ocint{0,1}$.
\end{theorem}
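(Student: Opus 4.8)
The plan is to prove $\lyap$-uniform geometric ergodicity by the standard Meyn--Tweedie/Hairer--Mattingly route: establish (i) a Foster--Lyapunov drift condition $P_t \lyap \leq a \lyap + b\1_{\msk}$ for some $t>0$, some $a<1$, $b<\infty$ and some compact $\msk$, together with (ii) a uniform minorization (local Doeblin) condition on $\msk$, and then invoke the quantitative ergodic theorem (e.g.\ the version quoted from \cite{DurmusGuillinMonmarche:toolbox}) to get \eqref{eq:def_unif_V_geo_ergo}. Since \Cref{theo:V_geo_ergo_loi_borne} was already proven by exactly this scheme, the bulk of the work here is to redo step (i) with the candidate Lyapunov function $\lyap(x,y) = \exp(\kappa U(x))$ (i.e.\ the exponent $\varsigma = 1$) under the weaker tail assumption \Cref{ass:geo_ergo_5}, and this is where the restriction to small refreshment rate $\rate \le \lambda_0$ enters. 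The minorization step (ii) is unchanged: it comes from the coupling construction of \Cref{sec:couplage}, which only needs a refreshment mechanism (as the authors remark) and the controllability provided by \Cref{ass:geo_ergo_1}\ref{ass:geo_ergo_1_loiy}; I would simply cite it.

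\textbf{The drift estimate.} For $\lyap = \exp(\kappa U)$ one computes the action of the BPS generator $\mathcal{A}$: along the deterministic flow $\tfrac{d}{dt} U(x+ty) = \ps{y}{\nabla U(x+ty)}$, so the transport part contributes $\kappa \ps{y}{\nabla U(x)} \lyap(x,y)$; the bounce part replaces $y$ by $\Refl(x,y)$, which does not change $U(x)$, hence contributes $0$ to $\mathcal{A}\lyap$; the refreshment part at rate $\rate$ replaces $y$ by an independent $G\sim\loiy$ and also leaves $U(x)$ fixed, so it too contributes $0$. Thus $\mathcal{A}\lyap(x,y) = \kappa \ps{y}{\nabla U(x)} \lyap(x,y)$, and the sign is governed by whether the velocity points uphill or downhill. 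Since $\msy$ is bounded, $\norm y \le R_{\msy}$, so on the "good" half (where $\ps{y}{\nabla U(x)} \le -c\norm{\nabla U(x)}$ for a fixed fraction of velocities after refreshment) we get strong negative drift because $\norm{\nabla U(x)}$ is bounded below at infinity by \Cref{ass:geo_ergo_5}; the difficulty is the "bad" half where the velocity points uphill. The key mechanism — and the reason $\rate$ must be small — is that when moving uphill the bounce rate $\int_0^t \ps{Y}{\nabla U(X+sY)}_+\,ds$ is comparable to $\norm{\nabla U}$, which stays bounded below, so a bounce occurs quickly and redirects the velocity; meanwhile $\lim \norm{\nabla^2 U} = 0$ ensures $\nabla U$ is nearly constant over the short times involved, so after the bounce the velocity genuinely points downhill (the reflection of an uphill direction across the hyperplane orthogonal to $\nabla U$ is downhill). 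A refreshment, by contrast, is "wasteful" — it can re-randomize into an uphill direction again — so one needs $\rate$ below a threshold $\lambda_0$ so that, over the natural timescale, bounces dominate refreshments and the net effect is contraction of $\lyap$.

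\textbf{Carrying it out.} Concretely I would: fix $t_0$ small; split $\rset^d\times\msy = \msk \sqcup \msk^{\comp}$ with $\msk$ a large ball in $x$; on $\msk^{\comp}$, condition on whether the first event in $[0,t_0]$ is a bounce or a refreshment, use the above generator computation plus a Grönwall/Duhamel expansion of $P_{t_0}\lyap$ to second order in $t_0$, bounding the "uphill" excursions using that $\norm{\nabla^2 U}\to 0$ controls how long the process can keep climbing before a bounce resets it, and that after a bounce $U$ is strictly decreasing along the flow for a time bounded below; collect terms to obtain $P_{t_0}\lyap \le a\lyap$ on $\msk^{\comp}$ with $a<1$ provided $\rate \le \lambda_0$ and $\msk$ large enough, and $P_{t_0}\lyap \le b$ on $\msk$ by a crude bound using boundedness of $\norm{\nabla^2 U}$ near $\msk$. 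The main obstacle is making the heuristic "a bounce from an uphill velocity yields a downhill velocity, persisting long enough to recover the loss" fully quantitative with uniform (in $x$ at infinity) constants; this is exactly the kind of careful PDMP computation done for \Cref{theo:V_geo_ergo_loi_borne}, and I expect it is handled by the same lemma from the companion toolbox paper \cite{DurmusGuillinMonmarche:toolbox}, adapted to the regime $\varsigma=1$ with a small-rate constraint. Once (i) and (ii) hold, the conclusion \eqref{eq:def_unif_V_geo_ergo} follows from the general quantitative ergodic theorem; I would point to \Cref{sec:autre-demo_1} for the packaging.
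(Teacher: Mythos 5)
Your overall architecture (Foster--Lyapunov drift plus a Doeblin minorization on compacts from the coupling of \Cref{sec:couplage}, combined via a quantitative Harris-type theorem) is the same as the paper's, and citing the minorization is fine. The gap is in the drift step, which is the only place where \Cref{ass:geo_ergo_5} and the smallness of $\rate$ actually enter, and which you leave as a heuristic. Your generator computation $\generator \rme^{\kappa U}(x,y)=\kappa\ps{y}{\nabla U(x)}\rme^{\kappa U(x)}$ is correct and already shows that $\exp(\kappa U)$ alone cannot satisfy a generator-level drift; but the probabilistic fix you sketch is not uniformly valid as stated. The bounce rate is $\ps{y}{\nabla U(x)}_+$, not $\norm{\nabla U(x)}$: for velocities nearly tangent to the level sets it is arbitrarily small even though $\liminf\norm{\nabla U}>0$, so the claim that ``when moving uphill a bounce occurs quickly'' fails precisely in the near-tangential regime. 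There neither bouncing nor transport helps, and the process must wait for a \emph{refreshment} to escape; refreshment is therefore not merely ``wasteful'', and the threshold $\lambda_0$ does not come from ``bounces dominating refreshments'' but from a more delicate balance (refreshment must be active enough to handle tangential velocities, yet not so strong that it destroys the gain obtained in the non-tangential regimes). Your proposed small-$t_0$ Duhamel expansion of $P_{t_0}\exp(\kappa U)$ would have to resolve exactly this uniformity issue, and you explicitly leave it open, deferring it to a lemma of the companion toolbox paper that does not exist for this purpose.

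What the paper does instead is to keep a generator-level drift but modify the Lyapunov function: $V(x,y)=\rme^{\kappa \bU(x)}\varphi\{(2\ell(x)/(rc_1))\ps{y}{\nabla \bU(x)}\}+\rme^{H(\norm{y})}$, where the angular weight $\varphi$ encodes whether the velocity points uphill or downhill and turns your heuristic (bounces flip $\theta=\ps{y}{\nabla U}$ to $-\theta$; refreshment recenters $\varphi(\theta)$ towards its mean; transport decreases $\rme^{\kappa U}$ when $\theta<0$) into the inequality $\generator V\le A_1(A_2-V)$ of \Cref{lem:lyapunov}. \Cref{theo:V_geo_ergo_lambda_0} is then a two-line verification of \Cref{ass:geo_erg_general} with $\bU=U$, $\ell\equiv 1$, $H(t)=t^2$: since $\msy$ is bounded and $\norm{\nabla^2 U(x)}\to 0$, the constant $c_4$ in \eqref{ass:geo_erg_general_eq_6} can be made arbitrarily small, and the compatibility condition \eqref{eq:condition_geo_4} then holds exactly when $\rate$ is small enough, which is where $\lambda_0$ comes from; since $1\le\varphi\le 1+c$ and $\msy$ is bounded, the resulting $V$-norm is equivalent to the norm for $\exp(\kappa U)$, giving the stated conclusion. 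So your proposal identifies the right mechanisms but does not contain the actual drift proof, and the step it relies on (``uphill implies fast bounce'') is false without the angular refinement or some substitute for it.
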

\begin{proof}
The proof is postponed to \Cref{sec:proof_lambda_0}. 
\end{proof}
Note that contrary to the setting of \Cref{theo:V_geo_ergo_loi_borne}, the result of \Cref{theo:V_geo_ergo_lambda_0} requires that the refreshment rate $\rate$ is sufficiently small for the BPS to be $\lyap$-uniformly geometrically ergodic.  

We now turn to the case where $\msy$ is unbounded. Indeed, this case is interesting from the numerical experiments conducted in \cite[Section 4.3]{Doucet2015} which shows that the choice of $\msy = \rset^{d}$   and $\loiy$ being the $d$-dimensional Gaussian distribution appears to be better and less sensitive to the choice of the refreshment rate $\rate$ compared to $\msy = \sphere^{d}$ and the uniform distribution on this set.
  
In the case where $\msy$ is unbounded,   \Cref{ass:geo_ergo_2} must be strengthen as follow.
\begin{assumption}
  \label{ass:geo_ergo_4}
 There exists $\varsigma \in \ooint{0,1}$ such that
 \begin{align*}
    \liminf_{\norm{x} \to \plusinfty} \defEns{ \norm{\nabla U(x)}/\, U^{1-\varsigma}(x) }&>0
    \\
    \limsup_{\norm{x} \to \plusinfty} \defEns{\norm{\nabla U(x)}/\, U^{1-\varsigma}(x)}  &<  \plusinfty 
 \\
 \limsup_{\norm{x} \to \plusinfty} \defEns{\norm{\nabla^2 U(x)}/\, U^{1-2\varsigma}(x)} &< \plusinfty \eqsp.
    \end{align*}     
\end{assumption}
   \Cref{ass:geo_ergo_4} (and therefore \Cref{ass:geo_ergo_2})  holds when $U$ is a perturbation of an
  $\alpha$-homogeneous function:
\begin{proposition}
  \label{propo:alpha_homogeneous}
  Let $\alpha \in \ooint{1,\plusinfty}$ and assume that $U = U_1 + U_2$ with  $U_1,U_2 \in \mrc^2(\rset^d)$ satisfying
  \begin{enumerate}[label=$\bullet$,wide, labelwidth=!, labelindent=0pt]
  \item $U_1$ is $\alpha$-homogeneous: for all $t \geqslant 1$ and $x \in \rset^d$ with $\norm{x}\geqslant 1$, $$      U_1(tx) = t^{\alpha}U_1(x)  \text{ and }       \lim_{\norm{x} \to \plusinfty} U_1(x) = \plusinfty \eqsp.$$
  \item
    \begin{equation*}
      \limsup_{\norm{x} \to \plusinfty} \defEns{U_2(x)/\norm[\alpha]{x} + \norm{\nabla U_2(x)}/\norm[\alpha-1]{x} + \norm{\nabla^2 U_2(x)}/\norm[\alpha-2]{x} } = 0 \eqsp. 
    \end{equation*}
  \end{enumerate}
  Then \Cref{ass:geo_ergo_4}  holds with $\varsigma = 1/\alpha$.
\end{proposition}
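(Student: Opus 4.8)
The plan is to turn $\alpha$-homogeneity into two-sided polynomial bounds for $U_1$ and $\nabla U_1$ together with a one-sided polynomial bound for $\nabla^2 U_1$ — all valid only for $\norm{x}$ large, since homogeneity is assumed only on $\{\norm{x}\geq 1\}$ — and then to absorb $U_2$ as a lower-order perturbation so that $U$, $\nabla U$, $\nabla^2 U$ inherit these growth rates. The choice $\varsigma=1/\alpha$ will then match the three requirements of \Cref{ass:geo_ergo_4} essentially term by term.

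\textbf{Step 1 (consequences of homogeneity of $U_1$).} For $\norm{x}\geq 1$, applying $U_1(tx)=t^\alpha U_1(x)$ with $t=\norm{x}$ and the unit vector $x/\norm{x}$ gives $U_1(x)=\norm[\alpha]{x}\,U_1(x/\norm{x})$. If $U_1(\omega)\leq 0$ for some $\omega\in\sphere^d$, then $U_1(r\omega)=r^\alpha U_1(\omega)\leq 0$ for all $r\geq 1$, contradicting $\lim_{\norm{x}\to\infty}U_1(x)=\infty$; hence $U_1>0$ on $\sphere^d$, so by compactness $m:=\min_{\sphere^d}U_1>0$ and $M:=\max_{\sphere^d}U_1<\infty$, whence $m\norm[\alpha]{x}\leq U_1(x)\leq M\norm[\alpha]{x}$ for $\norm{x}\geq 1$. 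Differentiating $t\mapsto U_1(tx)$ at $t=1$ yields Euler's identity $\ps{x}{\nabla U_1(x)}=\alpha U_1(x)$ for $\norm{x}\geq 1$, so by Cauchy--Schwarz $\norm{\nabla U_1(x)}\geq \alpha U_1(x)/\norm{x}\geq \alpha m\norm[\alpha-1]{x}$. For the upper bounds, I differentiate the identity $U_1(tx)=t^\alpha U_1(x)$ with respect to $x$ on the open set $\{\norm{x}>1\}$, getting $\nabla U_1(tx)=t^{\alpha-1}\nabla U_1(x)$ and then $\nabla^2 U_1(tx)=t^{\alpha-2}\nabla^2 U_1(x)$ for $t\geq 1$, $\norm{x}>1$; writing any $x$ with $\norm{x}\geq 2$ as $x=(\norm{x}/2)\,w$ with $\norm{w}=2$ and using continuity (hence boundedness) of $\nabla U_1$ and $\nabla^2 U_1$ on the compact sphere $\{\norm{w}=2\}$, one obtains constants $c_1,c_2$ with $\norm{\nabla U_1(x)}\leq c_1\norm[\alpha-1]{x}$ and $\norm{\nabla^2 U_1(x)}\leq c_2\norm[\alpha-2]{x}$ for $\norm{x}\geq 2$.

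\textbf{Step 2 (absorbing $U_2$ and conclusion).} By hypothesis $U_2(x)/\norm[\alpha]{x}\to0$, $\norm{\nabla U_2(x)}/\norm[\alpha-1]{x}\to0$ and $\norm{\nabla^2 U_2(x)}/\norm[\alpha-2]{x}\to0$ as $\norm{x}\to\infty$. Combining with Step 1 and the triangle inequality gives $0<m\leq\liminf_{\norm{x}\to\infty}U(x)/\norm[\alpha]{x}$ and $\limsup_{\norm{x}\to\infty}U(x)/\norm[\alpha]{x}\leq M<\infty$; likewise $0<\alpha m\leq\liminf_{\norm{x}\to\infty}\norm{\nabla U(x)}/\norm[\alpha-1]{x}$ and $\limsup_{\norm{x}\to\infty}\norm{\nabla U(x)}/\norm[\alpha-1]{x}\leq c_1<\infty$, and $\limsup_{\norm{x}\to\infty}\norm{\nabla^2 U(x)}/\norm[\alpha-2]{x}\leq c_2<\infty$; in particular $U>0$ for $\norm{x}$ large. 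Now set $\varsigma=1/\alpha\in(0,1)$, so $1-\varsigma=(\alpha-1)/\alpha$ and $1-2\varsigma=(\alpha-2)/\alpha$; since $U(x)\asymp\norm[\alpha]{x}$ with $U(x)>0$ for large $\norm{x}$, raising to these fixed exponents gives $U(x)^{1-\varsigma}\asymp\norm[\alpha-1]{x}$ and $U(x)^{1-2\varsigma}\asymp\norm[\alpha-2]{x}$. Hence $\norm{\nabla U(x)}/U(x)^{1-\varsigma}$ is bounded above and below by positive constants as $\norm{x}\to\infty$, and $\norm{\nabla^2 U(x)}/U(x)^{1-2\varsigma}$ is bounded above, which are exactly the three conditions of \Cref{ass:geo_ergo_4} with $\varsigma=1/\alpha$.

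I do not expect a genuine obstacle here: the whole argument is bookkeeping. The only point requiring a little care is that homogeneity holds only on $\{\norm{x}\geq 1\}$, which forbids differentiating the scaling relation at the origin; this is handled by differentiating on the open region $\{\norm{x}>1\}$ and then transporting the estimates onto the fixed compact sphere of radius $2$, where continuity of $U_1$ and of its first two derivatives supplies the needed boundedness for free.
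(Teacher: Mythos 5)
Your proof is correct, and it follows the same overall strategy as the paper: reduce everything to the bounds $C_1\norm[\alpha-1]{x}\leq\norm{\nabla U_1(x)}\leq C_2\norm[\alpha-1]{x}$ and $\norm{\nabla^2 U_1(x)}\leq C_2\norm[\alpha-2]{x}$ for the homogeneous part, then absorb $U_2$ and check the three ratios of \Cref{ass:geo_ergo_4} with $\varsigma=1/\alpha$. Where you differ is in how the key estimates are obtained: the paper invokes \cite[Lemma 4.5]{jarner:hansen:2000} for the two-sided gradient bound and gets the Hessian bound by explicitly differentiating the radial representation $U_1(x)=\norm[\alpha]{x}\,U_1(x/\norm{x})$ twice (a fairly messy computation), whereas you derive the lower gradient bound from Euler's identity $\ps{x}{\nabla U_1(x)}=\alpha U_1(x)$ and the upper gradient and Hessian bounds from the scaling relations $\nabla U_1(tx)=t^{\alpha-1}\nabla U_1(x)$, $\nabla^2 U_1(tx)=t^{\alpha-2}\nabla^2 U_1(x)$ transported to the sphere of radius $2$. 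This makes your argument self-contained and avoids both the external citation and the explicit Hessian computation; you also make explicit two points the paper leaves implicit, namely the positivity of $U_1$ on the unit sphere (hence the two-sided bound $m\norm[\alpha]{x}\leq U_1(x)\leq M\norm[\alpha]{x}$, which is genuinely needed to convert $U^{1-\varsigma}$ and $U^{1-2\varsigma}$ into powers of $\norm{x}$, including when $1-2\varsigma<0$ for $\alpha<2$) and the absorption of the perturbation $U_2$. Your handling of the restriction of homogeneity to $\{\norm{x}\geq 1\}$ (one-sided differentiation in $t$ at $t=1$, differentiation in $x$ only on $\{\norm{x}>1\}$) is sound.
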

  The proof is postponed to  Appendix A.

This class of potentials is considered in \cite[Theorem 4.6]{jarner:hansen:2000}, which shows that the Random Walk Metropolis algorithm is geometrically ergodic for target distributions $\pi$ associated to a potential belonging to this class. 

\begin{theorem}
  \label{theo:V_geo_ergo_loi_non_borne}
  Assume \Cref{assum:hyp_base}, \Cref{ass:geo_ergo_1} , \Cref{ass:geo_ergo_4} and $\loiy$ admits a Gaussian moment: there exists $\eta>0$ such that
  $\int_{\msy} \rme^{\eta \norm{y}^2}  \loiy(\rmd y)  <  \plusinfty$.
  Then,  for any refreshment rate $\rate >0$, there exists $\kappa\in(0,1]$ such that $(P_t)_{t \geq 0}$ is $\lyap$-uniformly geometrically ergodic with $\lyap : \rset^d \times \msy \to \coint{1,\plusinfty}$ given for all $(x,y) \in \rset^d \times \msy$ by $\lyap(x,y) = \exp\parenthese{\kappa U^{\varsigma}(x)} + \exp\parentheseLigne{\eta \norm[2]{y}}$.
\end{theorem}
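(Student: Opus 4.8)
The plan is to check the hypotheses of our general geometric ergodicity result, from which all the results of this section are deduced; as for the previous theorems, the quantitative minorization of $P_{t_0}$ on the sublevel sets of the Lyapunov function (which are bounded under \Cref{ass:geo_ergo_1}) is supplied by the coupling construction of \Cref{sec:couplage}, so the point is to establish a Foster--Lyapunov drift estimate $P_{t_0}\lyap\leq\gamma\lyap+b$ with $\gamma\in(0,1)$, $b<\plusinfty$ and some $t_0>0$, for $\lyap=\lyap_1+\lyap_2$ where $\lyap_1(x,y)=\rme^{\kappa U^{\varsigma}(x)}$, $\lyap_2(x,y)=\rme^{\eta\norm{y}^2}$ and $\kappa\in(0,1]$ is to be chosen small enough. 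Recall that the extended generator of the BPS acts on sufficiently smooth $f$ by
\begin{equation*}
\mathcal{A}f(x,y)=\ps{y}{\nabla_x f(x,y)}+\rate\int_{\msy}(f(x,z)-f(x,y))\,\loiy(\rmd z)+\ps{y}{\nabla U(x)}_+\,(f(x,\Refl(x,y))-f(x,y))\eqsp.
\end{equation*}

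\emph{Velocity part.} Since $\norm{\Refl(x,y)}=\norm{y}$ and $\lyap_2$ does not depend on $x$, the transport and bounce terms vanish on $\lyap_2$, so $\mathcal{A}\lyap_2(x,y)=\rate(\momentNoise_\eta-\lyap_2(x,y))$ with $\momentNoise_\eta:=\int_{\msy}\rme^{\eta\norm{z}^2}\loiy(\rmd z)<\plusinfty$ by assumption. Equivalently and more directly: on the event that no refreshment occurs in $[0,t_0]$, of probability $\rme^{-\rate t_0}$, the velocity $Y_{t_0}$ is obtained from $y$ by reflections and $\norm{Y_{t_0}}=\norm{y}$, while on the complementary event $\norm{Y_{t_0}}=\norm{G}$ with $G\sim\loiy$ the last refreshed velocity; hence $P_{t_0}\lyap_2\leq\rme^{-\rate t_0}\lyap_2+\momentNoise_\eta$, a contraction up to an additive constant.

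\emph{Position part (the crux).} Since $\lyap_1$ depends only on $x$, the jump terms vanish on it and $\mathcal{A}\lyap_1(x,y)=\kappa\varsigma\,U^{\varsigma-1}(x)\ps{y}{\nabla U(x)}\lyap_1(x,y)$, which has no definite sign and may be negligible when $\norm{y}$ is small; there is thus no pointwise drift and one works over a window $t_0\asymp\rate^{-1}$ that contains, with probability bounded below, a refreshment. Conditioning on the first refreshment time $\tau$ and on the fresh velocity $G\sim\loiy$ (independent of $\lyap_1(X_\tau)$), rotation invariance of $\loiy$ gives, with $\loiy$-probability bounded below, $\ps{G}{\nabla U(X_\tau)}\leq-c_0\norm{G}\norm{\nabla U(X_\tau)}$; on this favourable event \Cref{ass:geo_ergo_4}, through $\norm{\nabla U}\gtrsim U^{1-\varsigma}$ and $\norm{\nabla^2 U}/\norm{\nabla U}\lesssim U^{-\varsigma}\to0$, ensures that the straight segment started at $X_\tau$ does not bounce early and that $U(X_t)$ decreases at rate $\gtrsim\norm{G}\,U^{1-\varsigma}(X_t)$, so that $U^{\varsigma}(X_t)$ decreases at rate $\gtrsim\varsigma\norm{G}$, a genuine decrease that does not degrade as $U(x)\to\plusinfty$. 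On the remaining velocities only a crude control of $\PE[\rme^{\kappa U^{\varsigma}(X_{t_0})}]$ is needed: either $\ps{G}{\nabla U}$ is small and $U(X_t)$ barely moves over the window, or it is positive and the bounce rate $\ps{Y_t}{\nabla U(X_t)}_+$ is large (again because $\norm{\nabla U}\to\plusinfty$), so a bounce, after which the velocity points downhill, occurs quickly; in all cases the quadratic variation of $\ps{G}{\nabla U}$ along segments, of order $\norm{G}^2\norm{\nabla^2 U}\lesssim\norm{G}^2 U^{1-2\varsigma}$, is exactly why the Hessian bound in \Cref{ass:geo_ergo_4} must be stronger than that in \Cref{ass:geo_ergo_2}, and the extra powers of $\norm{G}$ it produces are absorbed by the Gaussian moment $\momentNoise_\eta<\plusinfty$. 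Combining the favourable-event decrease with this crude bound, for $\kappa$ small and $t_0$ well chosen one gets $P_{t_0}\lyap_1\leq\gamma_1\lyap_1+b_1$ outside a large sublevel set of $U$ (and $P_{t_0}\lyap_1\leq b_1'$ on it); adding the velocity estimate and taking $\gamma=\max(\gamma_1,\rme^{-\rate t_0})<1$ yields $P_{t_0}\lyap\leq\gamma\lyap+b$, so the general theorem applies and gives the claimed $\lyap$-uniform geometric ergodicity. The main obstacle is precisely this position estimate: making quantitative, uniformly in the possibly unbounded initial velocity and --- as emphasized in \Cref{sec:prec-expl-bound} --- as explicitly as possible in the dimension, the competition between the downhill decrease on the favourable event, the fast bouncing on the uphill event, and the variation of $U$ along straight segments controlled through \Cref{ass:geo_ergo_4}.
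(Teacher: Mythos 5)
Your route is not the one the paper takes, and as written it leaves the decisive step unproved. The paper obtains \Cref{theo:V_geo_ergo_loi_non_borne} by a short verification of \Cref{ass:geo_erg_general} for the general result \Cref{theo:geo_ergo_gene}: set $\bU=U^{\varsigma}$, $\ell\equiv 1$, $H(t)=\eta t^{2}$ (with $\eta$ small enough for the Gaussian-moment condition); the only new point compared with \Cref{theo:V_geo_ergo_loi_borne} under \Cref{ass:geo_ergo_2} is \eqref{ass:geo_erg_general_eq_6}, which holds because $\msa_x=\{y:\eta\norm{y}^{2}\leq 3U^{\varsigma}(x)\}$ gives $\sup_{y\in\msa_x}\norm{y}^{2}\,\norm{\nabla^{2}\bU(x)}\leq C\,U^{\varsigma}(x)\bigl(U^{\varsigma-1}(x)\norm{\nabla^{2}U(x)}+U^{\varsigma-2}(x)\norm{\nabla U(x)}^{2}\bigr)$, bounded precisely thanks to the strengthened Hessian bound of \Cref{ass:geo_ergo_4}; condition \eqref{eq:condition_geo_4} then follows from \Cref{rem:additio:ass} since $\norm{\nabla U}/\norm{\nabla\bU}=\varsigma^{-1}U^{1-\varsigma}\to\plusinfty$, and \Cref{theo:geo_ergo_gene} (i.e.\ the generator drift of \Cref{lem:lyapunov}--\Cref{lem:bound_J} for the angularly modulated Lyapunov function $\rme^{\kappa\bU(x)}\varphi(\cdot)+\rme^{H(\norm{y})}$, combined with the coupling minorization) concludes. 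Your observation that the stronger Hessian condition in \Cref{ass:geo_ergo_4} is exactly what absorbs the $\norm{y}^{2}$ factor produced by unbounded velocities is the correct insight and matches this computation.

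The gap is in your replacement of the generator-level drift by a fixed-time drift $P_{t_0}\lyap\leq\gamma\lyap+b$ for the unmodulated function $\rme^{\kappa U^{\varsigma}(x)}+\rme^{\eta\norm{y}^{2}}$, to be proved by conditioning on the first refreshment and a pathwise analysis of uphill/downhill excursions. This is a legitimate alternative strategy in principle, but its crux is never established. On the favourable (downhill) event you assert a decrease of $U^{\varsigma}$ at rate $\gtrsim\varsigma\norm{G}$, yet the direction of $\nabla U(X_t)$ rotates at rate of order $\norm{G}\,\norm{\nabla^{2}U}/\norm{\nabla U}$, so over a window of length $t_0\asymp\rate^{-1}$ the downhill guarantee is not automatic once $\norm{G}$ is much larger than $U^{\varsigma/2}(x)$ (this is exactly why the paper truncates velocities to $\msa_x$ and carries the extra term $\rme^{H(\norm{y})}$); and on the complementary event the ``crude control'' of $\PE\bigl[\rme^{\kappa U^{\varsigma}(X_{t_0})}\bigr]$ must handle several refreshments in the window, an unbounded initial velocity, and a number of bounce/uphill excursions that can grow with $\norm{\nabla U}$ and with the speed, none of which is quantified. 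You acknowledge this yourself in your final sentence: the ``main obstacle'' you name is precisely the estimate that a proof must supply. As it stands the proposal is a plan whose central Foster--Lyapunov step is missing; the machinery of \Cref{ass:geo_erg_general} and the $\varphi$-modulated Lyapunov function was designed exactly to avoid this finite-time pathwise analysis.
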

\begin{proof}
The proof is postponed to \Cref{sec:proof-crefth}.
\end{proof}


We now  compare  our results to the ones established by
  \cite{Doucet2017}.  First, their results deal only with the case
  where $\msy = \sphere^d$ and $\loiy$ is the uniform distribution on
  $\sphere^d$, while our work can be applied to much broader cases.
  We discuss in the following our main contributions compared to
  \cite{Doucet2017} in the case where $\msy$ is bounded. 
  The basic assumptions of  \cite{Doucet2017} are the following: (i) $\nabla^2 U$ is locally Lipschitz; (ii)
  $\int_{\rset^d}\norm{\nabla U(x)} \rmd \pi(x) < \plusinfty$; (iii)
  $\liminf_{\norm{x} \to \plusinfty} \{\rme^{U(x)/2} /$
  $ \norm{\nabla U(x)}^{1/2} \}>0$;
  \[
 \text{(iv)} \qquad   \inf_{(x,v) \in \rset^d \times \sphere^d} \frac{\rme^{U(x)/2}}{\{\ps{\nabla U(x)}{v}_+\Lambda_{\mathrm{ref}}\}^{1/2}} > 0  \eqsp, 
\]
where $\Lambda_{\mathrm{ref}} : \rset^d\to \rset_+$ is a function chosen in the results. These conditions are similar to \Cref{assum:hyp_base} and \Cref{ass:geo_ergo_1} in our work. We now give the results obtained by  \cite{Doucet2017} in detail in order to highlight the differences with the present work.  Apart from the CLT which is a consequence of the others, there are three main results in \cite{Doucet2017} for the geometric ergodicity of the BPS. The first one, concerning regular tail distributions (\cite[Theorem 3.1]{Doucet2017}), establishes that the BPS process as defined at the beginning of \Cref{sec:presentation-bps}  is $V$-geometrically ergodic if $\Lambda_{\mathrm{ref}} = \rate$ and one of the following conditions holds:
\begin{enumerate}[label=(\Alph*)]
\item \label{item_condition_doucet_A} $\liminf_{\norm{x} \to \plusinfty} \norm{\nabla U(x)} = \plusinfty$, $\limsup_{\norm{x}\to \plusinfty} \norm{\nabla^2 U(x)} < \plusinfty$ and $\rate >C_1$ for some constant $C_1>0$.
\item \label{item_condition_doucet_B}  $\liminf_{\norm{x} \to \plusinfty} \norm{\nabla U(x)} >0$, $\lim_{\norm{x}\to \plusinfty} \norm{\nabla^2 U(x)} = 0$\footnote{In the statement of the Theorem, the authors claim that $\limsup_{\norm{x}\to \plusinfty} \norm{\nabla^2 U(x)} < \plusinfty$ but a careful reading of the proof shows that $\lim_{\norm{x}\to \plusinfty} \norm{\nabla^2 U(x)} = 0$ is necessary.} and $\rate < C_2$ for some constant $C_2>0$.
\end{enumerate}
Note that \Cref{theo:V_geo_ergo_loi_borne} applied with \Cref{ass:geo_ergo_1bis} generalizes \cite[Theorem 3.1]{Doucet2017}-\ref{item_condition_doucet_A} since no condition on $\rate$ is required, which is nice in practice. In addition, \Cref{theo:V_geo_ergo_loi_borne} can be applied with other conditions than \Cref{ass:geo_ergo_1bis} \ie~\Cref{ass:geo_ergo_2} and \Cref{ass:geo_ergo_2_b}, which yields new results.   
Also, \Cref{theo:V_geo_ergo_lambda_0} is similar to  \cite[Theorem 3.1]{Doucet2017}-\ref{item_condition_doucet_B}, except that, as stated before, it  holds with more general choices for $\msy$.

The second results of \cite{Doucet2017} studies, in the case of
  thin tail distributions, the BPS process where $\rate$ is replaced
  by $\Lambda_{\mathrm{ref}} : \rset^d\to \rset_+$ defined for any
  $x \in \rset^d$ by
  $\rate + \norm{\nabla U(x)}/ \max(1,\norm{x}^{\epsilon})$ for some
  $\epsilon >0$.  Then, under the conditions that
  \begin{equation*}
    \lim_{\norm{x} \to \plusinfty} \norm{\nabla U(x)} / \norm{x}  = \plusinfty \eqsp, \quad \lim_{\norm{x}\to \plusinfty} \{ \norm{\nabla^2 U(x)}\norm{x}^{\epsilon} / \norm{\nabla U(x)} \} = 0 \eqsp,
  \end{equation*}
\cite[Theorem 3.2]{Doucet2017} shows that   the BPS with refreshment rate $\Lambda_{\mathrm{ref}}$ is
  $V$-geometrically ergodic.  The use of a non-constant, unbounded refreshment rate is motivated in \cite{Doucet2017} by the fact that \cite[Theorem 3.1]{Doucet2017} (the result with constant rate)
  does not apply to potentials equivalent at infinity  to $\norm{x}^\alpha$, $\alpha>2$. For instance, the case 
  of the  Bayesian logistic regression presented in \cite[Example 2]{Doucet2017} for which
  \begin{eqnarray}\label{Eq-U-regression-log}
  U(x) & = & \sum_{i=1}^d g(x_k) + \sum_{i=1}^{n_l} \po -b_i\ps{c_i}{x} + \log \po 1 + \rme^{\ps{c_i}{x}}\pf\pf\,,
  \end{eqnarray}
  with $y_i\in\{0,1\}$ and $c_i\in\mathbb R^d$ for all $i\in\{1,\ldots,n_l\}$, $n_l \in \nsets$ is the number of data points,  and $g(u) = (1+u^2/\sigma^2)^{\beta/2}$ for some parameters $\sigma>0$ and $\beta>2$, is covered by \cite[Theorem 3.2]{Doucet2017} but not \cite[Theorem 3.1]{Doucet2017}. Following the results of \cite{Doucet2017}, one would use a non-constant, unbounded refreshment rate in that practical case. However, first, from a computational point of view, 
  this kind of refreshment rate function may be
  problematic when there is no simple thinning method to sample the refreshment times exactly. Even when a thinning method is available, the cost of each jump is increased since $\nabla U$ has to be computed when a refreshment is proposed.    Moreover, at least for $d =1$ (see \cite{bierkens:duncan:2017}), increasing the refreshment rate - hence the amount of randomness in the system and its diffusive behaviour - increases the asymptotic variance. For these reasons, it was an important question to understand whether the use of a non-constant, unbounded refreshment rate in \cite{Doucet2017} was a practical necessity  or a technical restriction in the theoretical study. Although the assumptions of \Cref{theo:V_geo_ergo_loi_borne} are slightly more restrictive than the conditions of  \cite[Theorem 3.2]{Doucet2017}, our results shows that a constant refreshment (with any positive value) is in fact sufficient for a large class of thin tail distributions, including the logistic regression case \eqref{Eq-U-regression-log} or more generally the cases where $U$ behaves at infinity like  $\norm{x}^\alpha$ for any $\alpha>1$ (from \Cref{theo:V_geo_ergo_loi_borne} with \Cref{ass:geo_ergo_2} thanks to \Cref{propo:alpha_homogeneous}).

Finally, \cite[Theorem 3.3]{Doucet2017} deals with thick tail distributions. It consists in applying
  smooth bijective parametrizations of the space proposed by
  \cite{johnson2012} to get geometric ergodicity of Metropolis-Hastings algorithms
for thick tail distributions by transforming the target into a thin tail one.  It is in fact a general trick that could also be applied in combination of our results. 

\bigskip

As noticed before, \Cref{theo:V_geo_ergo_loi_borne}, \Cref{theo:V_geo_ergo_lambda_0} and \Cref{theo:V_geo_ergo_loi_non_borne} ensue from a more general results, which holds under the following assumption.
\begin{assumption}
  \label{ass:geo_erg_general}
  There exist some positive functions $H \in \mrc(\rset_+)$, $\psi \in\mrc^2(\rset)$, $\ell\in\mrc^1(\rset^d)$, and some constants $R,r,\delta>0$, $c_i>0$ for $i=1,\ldots,4 $  satisfying the following conditions.
  \begin{enumerate}[label=(\roman*)]
  \item   \textbf{Conditions on $U$.} The function $\bU$, defined by  $\bU = \psi \circ U$, satisfies
    \begin{align}
  \label{ass:geo_erg_general_eq_1}      
    &  \lim_{\norm{x} \to \plusinfty} \bU(x)   =  \plusinfty \eqsp, \qquad \int_{\rset^d} \exp\parenthese{ \bU(x) - U(x)} \rmd x  < \plusinfty \\
    \label{ass:geo_erg_general_eq_2}
    & \underset{x\in\R^d}\sup \defEns{ \exp\parenthese{- \bU(x) /4}\po \norm{\na \bU(x)} + \norm{\na^2 \bU(x)}\pf} < \plusinfty \eqsp,  
    \end{align}
    and for all $x \in \rset^d$ with $\norm{x}>R$,
    \begin{equation}
     \label{ass:geo_erg_general_eq_3}
      \norm{\na \bU(x)} \ell(x) \geqslant c_1 \eqsp,  \ell(x) \leqslant  c_2\eqsp,   \norm{\na U(x)} \ell(x) / \norm{\na \bU(x)} \geqslant c_3\eqsp.
   \end{equation}
  \item       \label{ass:geo_erg_general_eq_4}
    \textbf{Conditions on $\loiy$.}
   \begin{equation*}
  \int_{\msy} \rme^{\poty(\norm{y})}  \loiy(\rmd y)  <  \infty\eqsp,   \qquad   \underset{y\in \msy}\sup \defEns{\rme^{-\poty(\norm{y})/2}\norm{y}^2}  <  \infty\eqsp,  
\end{equation*}
\begin{equation*}
    \int_{\msy} \1_{\coint{r,\plusinfty}}(y_1) \loiy(\rmd y) \geqslant \frac{\delta}{2} \eqsp.
\end{equation*}
  \item   \textbf{Conditions on $U$ and $\loiy$.} 
    For $x \in \rset^{d}$, define
    \begin{equation}
      \label{eq:def_a_x}
      \msa_x = \ensemble{ y \in \msy}{\poty(\norm{y}) \leq 3 \bU(x) } \eqsp.
    \end{equation}
     Assume that
   \begin{equation}
    \label{ass:geo_erg_general_eq_5}
    \lim_{\norm{x} \to \plusinfty}  \parentheseDeux{  \norm{\na \ell(x)} \defEns{1 \vee \sup_{y\in \msa_x} \norm{y}} }  =  0  \eqsp,
    \end{equation}     
    and for all $x \in \rset^d$ with $\norm{x}>R$,
   \begin{eqnarray}
\label{ass:geo_erg_general_eq_6}
 \norm{\na^2 \bU(x)} \ell(x) \defEns{ \underset{y\in \msa_x}\sup \norm{y}^2} & \leqslant &    c_4 \eqsp.
    \end{eqnarray}
   


  \end{enumerate}
\end{assumption}

\begin{theorem}
  \label{theo:geo_ergo_gene}
  Assume \Cref{assum:hyp_base}-\Cref{ass:geo_ergo_1}-\Cref{ass:geo_erg_general}. Assume in addition that  the following inequalities hold
\begin{multline}
  \label{eq:condition_geo_4}
 [ 16 \rate c_2/(rc_1)] \vee [ 64 c_4 c_2 /(rc_1)^2]  \\\leq      \parentheseDeux{(1/3) \wedge\{ \rate \delta r c_1 /(16 c_4) \} } \parentheseDeux{\{c_3/(4c_2)\} \wedge \{\rate \delta  c_3 / (100 r  c_1)\}^{1/2}} \eqsp.
\end{multline}
 Then there exists $\kappa \in \ocint{0,1}$ given below by \eqref{eq:def_kappa}, 
  such that $(P_t)_{t \geq 0}$ is
  $\lyap$-uniformly geometrically ergodic with $\lyap$ given for all $(x,y) \in \rset^d \times \msy$ by $   \lyap(x,y) = \exp\parenthese{\kappa \bU(x)} + \exp\parentheseLigne{\poty(\norm{y})}$.
\end{theorem}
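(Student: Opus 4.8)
The plan is to establish the two ingredients of the Meyn--Tweedie-type criterion for $\lyap$-uniform geometric ergodicity recalled in \cite{DurmusGuillinMonmarche:toolbox}: a Foster--Lyapunov drift inequality towards a sublevel set, and a minorization on that set. Concretely, I will exhibit a function $\bar\lyap : \rset^d\times\msy\to\coint{1,\plusinfty}$ equivalent to $\lyap$ (i.e. $c\,\bar\lyap\leq \lyap\leq C\,\bar\lyap$ for some $0<c\leq C<\plusinfty$), constants $\zeta>0$, $b<\plusinfty$, and a set $\msk = \ensemble{(x,y)\in\rset^d\times\msy}{\norm{x}\leq R'\eqsp, \poty(\norm{y})\leq M}$ with $\generator \bar\lyap \leq -\zeta\,\bar\lyap + b\1_{\msk}$, where $\generator$ is the extended generator of the BPS, acting on $\mrc^1$ functions by
\[
\generator f(x,y) = \ps{y}{\na_x f(x,y)} + \ps{y}{\na U(x)}_+\big[f(x,\Refl(x,y))-f(x,y)\big] + \rate\int_{\msy}\big[f(x,z)-f(x,y)\big]\loiy(\rmd z)\eqsp.
\]
Since $\bU = \psi\circ U$, the gradients $\na\bU$ and $\na U$ are positively colinear, so $\rmn(\na\bU(x)) = \rmn(\na U(x))$ and $\Refl(x,\cdot)$ reverses the sign of $s(x,y) := \ps{y}{\rmn(\na\bU(x))}$; moreover $\int_{\rset^d}\rme^{\bU(x)-U(x)}\rmd x<\infty$ from \eqref{ass:geo_erg_general_eq_1} guarantees $\tpi(\lyap)<\infty$, so the conclusion is non-vacuous.

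Applying $\generator$ to the naive guess $(x,y)\mapsto\rme^{\kappa\bU(x)}$ yields $\kappa\ps{y}{\na\bU(x)}\rme^{\kappa\bU(x)}$, which has no definite sign: along uphill velocities it is positive. The remedy is a velocity-dependent twist; a natural form is
\[
\bar\lyap(x,y) = \rme^{\kappa\bU(x)}\big(1-\epsilon\,\chi(\ell(x)\,s(x,y))\big) + \rme^{\poty(\norm{y})}\eqsp,
\]
with $\chi$ a bounded, odd, increasing smooth function with bounded derivatives and $\epsilon>0$ small, so that $\bar\lyap$ is equivalent to $\lyap$. The drift computation for $\bar\lyap$ is the core of the argument and splits into three effects. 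First, the reflection term is largest precisely when $\ps{y}{\na U(x)}_+$ is large, that is when the particle climbs and $\kappa\ps{y}{\na\bU(x)}\rme^{\kappa\bU(x)}$ is positive; since $\Refl$ flips the sign of $s$, this term contributes (using the oddness of $\chi$) a quantity $\asymp -\epsilon\norm{\na U(x)}\,s(x,y)_+\,\rme^{\kappa\bU(x)}$, which, thanks to $\norm{\na\bU(x)}\ell(x)\geq c_1$ and $\norm{\na U(x)}\ell(x)/\norm{\na\bU(x)}\geq c_3$ from \eqref{ass:geo_erg_general_eq_3}, overcomes the indefinite transport term once $\kappa$ is chosen small. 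Second, the refreshment term produces the genuine contraction: on the one hand $\generator\rme^{\poty(\norm{y})} = -\rate\rme^{\poty(\norm{y})} + \rate\loiy(\rme^{\poty(\norm{\cdot})})$ is strictly dissipative because $\int_{\msy}\rme^{\poty(\norm y)}\loiy(\rmd y)<\infty$; on the other hand, by rotation invariance of $\loiy$ and $\int_{\msy}\1_{\coint{r,\plusinfty}}(y_1)\loiy(\rmd y)\geq\delta/2$, a refreshment puts the new velocity in the downhill cone $\{s(x,\cdot)\leq -r\}$ with probability at least $\delta/2$, which, transported through the twist, gives a term of order $-\epsilon\rate\delta r\,\rme^{\kappa\bU(x)}$ outside a compact set. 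Third, one must control the parasitic transport terms created by differentiating $\ell$ and $\rmn(\na\bU)$ inside the twist: on the velocity range $\msa_x = \{y:\poty(\norm y)\leq 3\bU(x)\}$ these are handled by \eqref{ass:geo_erg_general_eq_2}, \eqref{ass:geo_erg_general_eq_5}--\eqref{ass:geo_erg_general_eq_6} (together with $\sup_{y\in\msy}\rme^{-\poty(\norm y)/2}\norm{y}^2<\infty$), while outside $\msa_x$ one has $\rme^{\poty(\norm y)}\geq\rme^{3\bU(x)}\geq\rme^{\kappa\bU(x)}$ and the $\rme^{\poty(\norm{y})}$ block absorbs everything. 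Assembling all the constants, the requirement that the reflection and refreshment gains dominate the transport losses for some admissible $\epsilon$ and some $\kappa\in\ocint{0,1}$ is exactly the scalar inequality \eqref{eq:condition_geo_4}. A routine localization/truncation argument legitimates the use of $\generator$ on the unbounded function $\bar\lyap$ and produces the drift inequality with $\msk$ as above.

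For the minorization, I will invoke the coupling constructed in \Cref{sec:couplage}: started from any two points of the bounded set $\msk$, it couples the two BPS trajectories so that they coalesce within a fixed time horizon with probability bounded below uniformly over $\msk$ (the refreshment mechanism is what makes this possible); this yields $t_0>0$, $\varepsilon_0>0$ and a probability measure $\nu$ with $P_{t_0}((x,y),\cdot)\geq\varepsilon_0\,\nu(\cdot)$ for all $(x,y)\in\msk$, i.e. $\msk$ is small. Plugging the drift inequality and this minorization into the general ergodicity theorem of \cite{DurmusGuillinMonmarche:toolbox} gives $\bar\lyap$-uniform geometric ergodicity, and the equivalence $c\bar\lyap\leq\lyap\leq C\bar\lyap$ turns this into \eqref{eq:def_unif_V_geo_ergo}.

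The step I expect to be the main obstacle is the drift computation of the second paragraph. The indefinite sign of $\generator\rme^{\kappa\bU(\cdot)}$ forces the velocity-dependent twist, and the real difficulty is not any single estimate but the bookkeeping: one must simultaneously keep the reflection gain, the refreshment gain, the dissipation of the $\rme^{\poty(\norm{y})}$ block, and several parasitic transport terms in a common scale, exploit the cutoff $\msa_x$ to cross over from the $\bU$-dominated to the $\poty$-dominated regime, and track how every constant $\rate,\delta,r,c_1,\ldots,c_4$ propagates, so that the final admissibility condition is precisely \eqref{eq:condition_geo_4} and the resulting $\kappa$ does not depend on $t$.
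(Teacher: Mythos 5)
Your overall architecture (a velocity-twisted Lyapunov function, a small-set/coupling argument on a sublevel set via \Cref{sec:couplage}, and a Harris-type conclusion as in \Cref{sec:proof_main_theo}) matches the paper, but the specific twist you propose breaks both mechanisms you rely on in the drift computation. Write $s(x,y)=\ps{y}{\rmn(\na \bU(x))}$ and take, as you do, $\bar{\lyap}(x,y)=\rme^{\kappa \bU(x)}\bigl(1-\epsilon\,\chi(\ell(x)s(x,y))\bigr)+\rme^{\poty(\norm{y})}$ with $\chi$ odd, increasing and bounded. First, since $\Refl(x,\cdot)$ flips the sign of $s$ and $\chi$ is odd, the bounce term equals $\ps{y}{\na U(x)}_+\,2\epsilon\,\chi(\ell(x)s(x,y))\,\rme^{\kappa \bU(x)}\geq 0$: with your sign convention (downhill velocities carry the larger weight) a bounce \emph{increases} $\bar{\lyap}$, so the claimed gain $\asymp-\epsilon\norm{\na U(x)}\,s_+\,\rme^{\kappa\bU(x)}$ has the wrong sign; you would need $1+\epsilon\chi$. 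Second, and more fundamentally, by rotation invariance of $\loiy$ the law of $s(x,W)$ under a refreshment is symmetric, so $\int_{\msy}\chi(\ell(x)s(x,w))\,\loiy(\rmd w)=0$ and the refreshment term reduces to $\pm\rate\,\epsilon\,\chi(\ell(x)s(x,y))\,\rme^{\kappa\bU(x)}$ (sign depending on the convention chosen), which vanishes when $s(x,y)=0$. The downhill cone $\{s(x,\cdot)\leq-r\}$ does carry mass $\geq\delta/2$, but the uphill cone carries the same mass, and through an \emph{odd} twist the two contributions cancel: there is no term of order $-\epsilon\rate\delta r\,\rme^{\kappa\bU(x)}$. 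Consequently, on the unbounded region where $\norm{x}$ is large, $y\in\msa_x$ and $\ps{y}{\na\bU(x)}\approx0$, transport, bounce and refreshment all vanish to first order, the remaining parasitic terms (involving $\na\ell$ and $\na^2\bU$) are bounded but not sign-definite, and the inequality $\generator\bar{\lyap}\leq-\zeta\bar{\lyap}+b\1_{\msk}$ with compact $\msk$ cannot be obtained this way.

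This orthogonal-velocity regime is exactly what the paper's construction is engineered for (Case 3 in the proof of \Cref{lem:bound_J}): the weight $\varphi$ of \eqref{eq:def_varphi_1} is deliberately \emph{not} odd — it saturates at $1$ only for arguments $\leq-2$ but already at $1+c$ for arguments $\geq1$, and the levels $a\leq b\leq c$ are calibrated (e.g.\ $c-b\leq\delta b/4$ in \eqref{eq:def_c}) so that the $\loiy$-average of $\varphi$ is at most $1+(1-\delta/2)c$, strictly below $\varphi(0)=1+b$. It is this asymmetric saturation, not the mere existence of a downhill cone, that converts $\loiy(\{w_1\geq r\})\geq\delta/2$ into a refreshment gain of order $\rate\delta c\,\rme^{\kappa\bU(x)}$ when $\ps{y}{\na\bU(x)}\approx0$; it is also where the precise constants of \eqref{eq:def_a}--\eqref{eq:def_vareps} and the admissibility condition \eqref{eq:condition_geo_4} come from, so your assertion that your bookkeeping would land ``exactly'' on \eqref{eq:condition_geo_4} is unsupported. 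The rest of your plan — smallness of the sublevel set via the coupling of \Cref{sec:couplage} (or \Cref{lem:condition-doeblin} in the singular case) and the conclusion via a Harris-type theorem as in Appendix~B — is consistent with the paper; the gap is the drift construction, where an odd, mean-zero twist must be replaced by an asymmetrically saturating one such as $\varphi$.
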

\begin{proof}
 The proof is postponed to \Cref{sec:proof_main_theo}.
\end{proof}

\begin{remark}
\label{rem:additio:ass}
  Note that, under \Cref{ass:geo_erg_general}, \eqref{eq:condition_geo_4} is implied by either one of the two following additional assumptions:
\begin{enumerate}[label=(\alph*)]
\item \label{item:condition_add_1} $\lim_{\norm{x} \to \plusinfty} \norm{\nabla \bU(x)} = \plusinfty$;
\item  \label{item:condition_add_2} $ \lim_{\norm{x} \to \plusinfty} \ell(x) = 0$;
\item \label{item:condition_add_3} $\lim_{\norm{x} \to \plusinfty} \norm{\nabla U(x)} \ell(x) /\norm{\nabla \bU(x)} = \plusinfty$. 
\end{enumerate}
Indeed, if \ref{item:condition_add_1} holds, then $c_1$ can be chosen as large as necessary while $c_2,c_4,c_3$ can be held fixed so that \eqref{eq:condition_geo_4} is satisfied.
If \ref{item:condition_add_2} holds, then $c_2$ can be chosen as small as necessary while $c_1,c_3,c_4$ can be held fixed.
Finally if \ref{item:condition_add_3} holds, then $c_3$ can be chosen as large as necessary while $c_1,c_2,c_4$ can be held fixed.
\end{remark}

 Note that if $(P_t)_{t \geq 0}$ is
$V$-uniformly geometrically ergodic then by \cite[Theorem 4.4]{glynn:meyn:1996}, a
functional Central Limit Theorem (FCLT) holds.    Let $g : \rset^d \times \msy \rightarrow \rset$ satisfying for all $(x,y) \in \rset^d \times \msy$, $\abs{g}^2 \leqslant C V$ for some $C>0$. Let $(X_t,Y_t)_{t \geq 0}$ be a BPS process with initial distribution $\mu_0 \in \mcp(\rset^d \times \msy)$, satisfying $\mu_0(V) < \plusinfty$. For $t\geqslant 0$ and $n\in \nset_*$, define
\[G^{n}_t = \frac{1}{\sqrt{n}} \int_0^{nt} \po g(X_s,Y_s) - \tpi (g) \pf \dd s.\]
Then, there exists $\sigma_g \geqslant 0$ such that the sequence of processes $\{(G_t^n)_{t \geq 0}, n \in \nset\}$ converges as $n\rightarrow\infty$ toward $ (\sigma_g B_t)_{t \geq 0}$ in the Skorokhod space, where $(B_t)_{t \geq 0}$ is a standard Brownian motion. It is also possible to consider moderate deviation \cite{Guillin2001,DoucGuillinMoulines2008} or large deviation principle \cite{Wu2000,KontoyiannisMeyn2005}

\section{Proofs of the main results} 
\label{sec:proof_main_result}

For the proof \Cref{theo:geo_ergo_gene}, we follow the Meyn and Tweedie approach, based upon two ingredients: a Foster-Lyapunov drift and a local Doeblin condition on compact sets. 
This section is organized as follows. Before showing the Foster-Lyapunov drift in \Cref{sec:Lyapunov}, we introduce the generator of the BPS in \Cref{sec:Generator}. Then in \Cref{sec:couplage}, we show that under appropriate conditions, the BPS satisfies a local Doeblin condition on compact sets.   Contrary to the previous works \cite{MonmarcheRTP,Doucet2017,bierkens:roberts:zitt:2017}, this result is obtained in the case where $\loiy$ has a density with respect to the Lebesgue measure  by a direct coupling. With these two elements in hand, \Cref{theo:geo_ergo_gene} is  proven in \ref{sec:proof_main_theo}. The proofs of \Cref{theo:V_geo_ergo_loi_borne}, \Cref{theo:V_geo_ergo_lambda_0}  and \Cref{theo:V_geo_ergo_loi_non_borne}  are given in \Cref{sec:autre-demo_1}, \Cref{sec:proof_lambda_0} and \Cref{sec:proof-crefth}.

\subsection{Generator of the BPS}
\label{sec:Generator}

 The BPS process  belongs to the class of Piecewise
Determistic Markov Processes (PDMP). Indeed, consider the ordinary
differential equation on $\rset^{2d} $
\begin{equation}
\label{eq:definition_ode}
  \frac{\rmd }{\rmd t} 
\begin{pmatrix}
x_t\\ 
y_t
\end{pmatrix}
=
\begin{pmatrix}
y_t\\ 
0
\end{pmatrix} \eqsp,
\end{equation}
and define  for all $t \geq 0$,
the map $\phi_t : \rset^{2d}  \to \rset^{2d} $
given for all $(x,y) \in \rset^{2d} $ by
\begin{equation}
  \label{eq:definition_flow}
 \phi_t(x,y)  =  (x+ty,y) \eqsp.
\end{equation} 
The family $\sequenceD{\phi_t}[t][\rset_+]$ is referred to as the flow
of diffeomorphisms associated with \eqref{eq:definition_ode} \ie~for
all $(x,y) \in \rset^{2d}$, $t \mapsto \phi_t(x,y)$ is solution of
\eqref{eq:definition_ode} started at $(x,y)$ and for all $t \geq 0$,
$(x,y) \mapsto \phi_t(x,y)$ is a $\mrc^{\infty}$-diffeomorphism.
 In addition to the deterministic flow $\sequenceD{\phi_t}[t][\rset_+]$, the BPS, as a PDMP, is characterized by a
function $\lambda : \rset^d \times \msy \to \rset_+$, referred to as the jump
rate, and a Markov kernel $Q$ on $\rset^d \times \msy \times \mcb{\rset^d \times \msy}$,
 defined for all $(x,y) \in \rset^d \times \msy$ and $\msa \in \mcb{\rset^d \times \msy }$ by
\begin{align*}
  \lambda(x,y) & =   \ps{y}{ \nabla U(x)}_+ + \lambdab \eqsp,  \\
  \nonumber
Q((x,y), \msa) & = \parentheseDeux{\updelta_x \otimes \defEns{\frac{\ps{y}{ \nabla U(x)}_+}{\lambda(x,y)} \updelta_{\Refl(x,y)} + \frac{\lambdab} {\lambda(x,y)} \loiy }}(\msa) \eqsp,
\end{align*}
where $\updelta_{x}$ is the Dirac measure at $x \in \rset^d$. With
these definitions in mind, we can define a PDMP (in the sense of
\cite{davis:1993}) $(\tX_t,\tY_t)_{t \geq 0}$ which has the same
distribution as $(X_t,Y_t)_{t \geq 0}$ on the space
$\mrd(\rset_+,\rset^d)$ of càdlàg functions
$\omega : \rset_+ \to \rset^d $, endowed with the Skorokhod
topology, see \cite[Chapter 6]{jacod:shiryaev:2003}.

Consider some initial condition $(x,y) \in
\rset^{2d}$, a family of \iid~random variables $(\tE_i,\tG_i,\tW_i)_{i \geq
  1}$ on the probability space $(\Omega,\mcf,\mathbb{P})$ introduced in \Cref{sec:presentation-bps}, where for all $i \geq 1$, $\tE_i$ is an exponential random
variable with parameter $1$, $\tG_i$ is a  random
variable with distribution $\loiy$, $\tW_i$ is a uniform
random variable and $\tE_i$, $\tG_i$ and $\tW_i$ are independent. Set $(\tX_0, \tY_0) = (x,y)$ and $\tS_0=0$. We define by
recursion the jump times of the process and the process itself. For all $n
\geq 0$, let
\begin{equation*}
  \tT_{n+1} = \inf\ensemble{t \geq 0}{\int_0^t \lambda\defEns{\phi_s(\tX_{\tS_n},\tY_{\tS_n})} \rmd s \geq \tE_{n+1}} .
\end{equation*}
Set $\tS_{n+1} = \tS_n + \tT_{n+1}$, $(\tX_t,\tY_t) = \phi_t(\tX_{\tS_n},\tY_{\tS_n})$ for all $t \in \cointLigne{\tS_n,\tS_{n+1}}$,  
$\tX_{\tS_{n+1}} = \tX_{\tS_n} + \tT_{n+1} \tY_{\tS_n} $ and
\begin{equation*}
  \tY_{\tS_{n+1}} = 
  \begin{cases}
     \tG_{n+1} &  \text{ if $\tW_{n+1} \leq \lambdab/\lambda(\tX_{\tS_{n+1}} ,\tY_{\tS_n})$} \\
     \Refl(\tX_{\tS_{n+1}}  , \tY_{\tS_n}) & \text{ otherwise} \eqsp,
   \end{cases}
 \end{equation*}
 where $\Refl$ is defined by \eqref{eq:definition_Refl}.
Thus, $(\tX_t,\tY_t)$ is defined for all $t < \sup_{n \in \nset} \tS_n$ and we
set for all $t \geq \sup_{n \in \nset} \tS_n$, $(\tX_t,\tY_t) =
\infty$, where $\infty$ is a cemetery point. Note that for all $n \in \nset^*$, $(\tX_{\tS_n}, \tY_{\tS_n})$
is distributed according to $Q((\tX_{\tS_n},
\tY_{\tS_{n-1}}),\cdot)$.

From \cite[\Cref{lem:superposition2}]{DurmusGuillinMonmarche:toolbox}, $(\tX_t,\tY_t)_{t \geq 0}$ and $(X_t,Y_t)_{t \geq 0}$ have the same distribution (in particular, almost surely $\sup_{n \in \nset} \tS_n = \infty$ and $(\tX_t,\tY_t)_{t \geq 0}$ is a   $(\rset^{d} \times \msy)$-valued \cadlag~process).

\bigskip

Consider the canonical process associated with the BPS process
$(X_t,Y_t)_{t \geq 0}$, still denoted by $(X_t,Y_t)_{t \geq 0}$ on the
Skorokhod space
$(\mrd(\rset_+,\rset^d \times \msy),\mcf,(\mcf_t)_{t \geq 0},$ $(\mathbb{P}_{x,y})_{(x,y) \in \rset^d \times \msy})$, where $\mcf$ is the Borel
$\sigma$-field associated with the Skorokhod topology,
$(\mcf_t)_{t \geq 0}$ is the completed natural filtration, and for all
$(x,y) \in \rset^d \times \msy$, $\PP_{x,y}$ is the distribution of the
BPS process starting from $(x,y) \in \rset^d \times \msy$.  For all $t \geqslant 0$
and Borel measurable functions
$f,g : \rset^d \times \msy \to \rset$ such that, for all $(x,y) \in \rset^d \times \msy$,
$s \mapsto g((X_s,Y_s))$ is integrable $\mathbb{P}_{(x,y)}$-almost
surely, denote
\begin{equation}
\label{eq:def_martin_local_pdmp_generator}
  \martfg_t = f(X_t,Y_t)-f(X_0,Y_0)-\int_0^t g(X_s,Y_s) \rmd s \eqsp.
\end{equation}
The (extended) generator and its domain $(\generator, \domain(\generator))$
associated with the semi-group $(P_t)_{t \geq 0}$ are defined as follows: $f \in
\domain(\generator)$ if there exists a Borel measurable function $g : \rset^d \times \msy \to \rset$
such that $(\martfg_t)_{t\geqslant0} $ is a local martingale under $\mathbb{P}_{(x,y)}$ for
all $(x,y) \in \rset^d \times \msy$ and, for such a function, $\generator f = g$.
Despite its very formal definition, $(\generator,
\domain(\generator))$ associated with $(P_t)_{t \geq 0}$ can be easily
described. Indeed, \cite[Theorem 26.14]{davis:1993}  shows that $\domain(\generator) = \mse_1 \cap \mse_2$ where
\begin{multline*}
  \mse_1 =\left\lbrace f \in \MeasFspace(\rset^d \times \msy) \, : \, t \mapsto f(\phi_t(x,y)) \right.  \\
\left.    \text{ is absolutely continuous on $\rset_+$ for all $(x,y) \in \R^{2d}$} \right\rbrace \eqsp,
\end{multline*}
and $\mse_2$ is the set of Borel measurable functions $f : \rset^d \times \msy \to \rset$ such that there exists an increasing sequence of $(\mcf_t)_{t \geq 0}$-stopping time $(\sigma_n)_{n\geqslant 0}$, such that for all $(x,y) \in \R^{2d}$, $\lim_{n \to \plusinfty } \sigma_n = \plusinfty\, \, \mathbb{P}_{(x,y)}$-almost surely, and for all $n \in \nset^*$,
\begin{equation}
  \label{eq:condition_generator}
  \expeMarkov{(x,y)}{\sum_{k = 1}^{\plusinfty} \1_{\{\S_k \leq \sigma_n\}} \abs{f(X_{\S_k},Y_{\S_k}) - f(X_{\S_k-},Y_{S_k-})} }< \plusinfty \eqsp.
\end{equation}
Taking for all $n \in \nset^*$, $\sigma_n = S_n \wedge n \wedge \upsilon_n$, where $\upsilon_n = \inf \{ t \geq 0\, : \, \norm{X_t} \geq n \}$, \eqref{eq:condition_generator} is satisfied for any  function $f\in \mrc(\rset^d \times \msy)$ such that for all $x\in\rset^d$, $\int_{\msy} |f(x,w)| \rmd \loiy(w) < \infty$. 


Then, for all $f \in  \domain(\generator)$ and $x,y \in \rset^d \times \msy$,
\begin{multline}
  \generator f(x,y) = \Ddir{y} f(x,y) + (\ps{y}{\nabla U(x)})_+ \defEns{f(x,\Refl(x,y)) - f(x,y)}\\ + \rate \defEns{\int_{\msy} f(x,w) \rmd \loiy(w)  - f(x,y)} \eqsp,
\end{multline}
where
\begin{equation*}
  \Ddir{y} f(x,y) =
  \begin{cases}
    \lim_{t \to 0} \frac{f(\varphi_t(x,y)) - f(x,y)}t \eqsp, &\text{ if this limit exists} \\
    0 & \text{ otherwise} \eqsp.
  \end{cases}  
\end{equation*}
In particular, if $x \mapsto f(x,y)$ is $\mrc^1$ for all $y\in\msy$, then
\begin{multline}
  \label{eq:extended_generator_expression}
  \generator f(x,y) = \ps{y}{\nabla f(x,y)} + (\ps{y}{\nabla U(x)})_+ \defEns{f(x,\Refl(x,y)) - f(x,y)}\\ + \rate \defEns{\int_{\msy} f(x,w) \rmd \loiy(w)  - f(x,y)} \eqsp.
\end{multline}

\subsection{Foster-Lyapunov drift condition}
\label{sec:Lyapunov}

For $a,b,c \in\rset_+$, $a\leq b \leq c$, $c-b \leq b-a \leq a$ and
$\varepsilon \in \ocint{0,1}$ consider a non-decreasing continuously
differentiable function $\varphi : \rset_+ \to \coint{1,\plusinfty}$
satisfying
\begin{equation}
\label{eq:def_varphi_1}
\begin{aligned}
  \varphi(s)&  =    1  & \text{ if $s \in \ocint{-\infty,-2}$} \\
    1+a(s+2) -\varepsilon  \leq \varphi(s) &\leq 1+a(s+2) +\varepsilon & \text{ if $s \in \ooint{-2,-1}$}\\
    \varphi(s) &=  1+b+s(b-a) &  \text{ if $s \in \ccint{-1,0}$} \\
    1+b+s(c-b) -\varepsilon \leq \varphi(s)  &\leq 1+b+s(c-b) +\varepsilon &  \text{ if $s \in \ooint{0,1}$} \\
  \varphi(s)&  =    1+c  &  \text{ if $s \in \ccint{1,\plusinfty}$} \eqsp,    
  \end{aligned}
\end{equation}
and
\begin{equation}
\label{eq:def_varphi_2}
  \sup_{s \in \ccint{-2,-1}} \varphi'(s) \leq a +\varepsilon  \eqsp, \sup_{s\in \ccint{0,1}} \varphi'(s) \leq c-b + \varepsilon \eqsp. 
\end{equation}
In addition for $\kappa \in \ocint{0,1}$, under \Cref{ass:geo_erg_general}, define the Lyapunov function $V : \rset^d \times \msy \to \coint{1,\plusinfty}$ by
\begin{equation}
  \label{eq:def_lyap_gene}
    V(x,y)  = \exp( \kappa \bU(x))  \varphi\defEns{ (2 \ell(x)/(r c_1))\ps{y}{ \na \bU(x)}}  + \exp(\poty(\norm{y})) \eqsp.
\end{equation}
This section is devoted to the proof of a Foster-Lyapunov drift condition for the generator $\generator$ given by \eqref{eq:extended_generator_expression} and the function $V$ defined in \eqref{eq:def_lyap_gene}.

\begin{lemma}
  \label{lem:lyapunov}
Assume  \Cref{assum:hyp_base}-\Cref{ass:geo_ergo_1}-\Cref{ass:geo_erg_general}  and \eqref{eq:condition_geo_4} hold.
There exist $a,b,c \in\rset_+$, $a \leq b \leq c$, $c-b\leq b-a \leq a$,
$\varepsilon \in \ocint{0,1}$ and $\kappa \in \ocint{0,1}$ such that $\generator$ given by \eqref{eq:extended_generator_expression} satisfies a Foster-Lyapunov drift condition with the Lyapunov function $V$, \ie~there exist $A_1,A_2>0$ such that, for all $(x,y)\in\R^{d}\times\msy$,
\begin{equation}
  \label{eq:drift_condition}
\generator V(x,y) \leq   A_1 \left(A_2 - V(x,y) \right) \eqsp. 
\end{equation}
\end{lemma}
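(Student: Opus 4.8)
The plan is to estimate $\generator V$ term by term, where $V(x,y) = \exp(\kappa \bU(x))\varphi\{\theta(x) \ps{y}{\na\bU(x)}\} + \exp(\poty(\norm{y}))$ with the shorthand $\theta(x) = 2\ell(x)/(rc_1)$. Write $V = V_1 + V_2$ with $V_1(x,y) = \exp(\kappa\bU(x))\varphi\{\theta(x)\ps{y}{\na\bU(x)}\}$ and $V_2(y) = \exp(\poty(\norm{y}))$, and analyse $\generator V_1$ and $\generator V_2$ separately using the expression \eqref{eq:extended_generator_expression} for $\generator$. For $\generator V_2$ the transport term vanishes since $V_2$ does not depend on $x$, the bounce term is zero because $\norm{\Refl(x,y)} = \norm{y}$, so only the refreshment term $\rate\{\loiy(V_2) - V_2(y)\}$ survives; the first moment condition in \Cref{ass:geo_erg_general}\ref{ass:geo_erg_general_eq_4} guarantees $\loiy(V_2) < \infty$, and this term is then bounded above by $\rate \loiy(V_2) - \rate \exp(\poty(\norm{y}))$, which is of the required form $A_1(A_2 - V_2)$ modulo the coupling term from $\generator V_1$. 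The bulk of the work is $\generator V_1$.

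For $\generator V_1$, differentiate along the flow: $\Ddir{y} V_1 = \exp(\kappa\bU)[\kappa\ps{y}{\na\bU}\varphi(u) + \varphi'(u)\,\frac{\rmd}{\rmd t}\big|_{t=0} u(\phi_t(x,y))]$ where $u = \theta(x)\ps{y}{\na\bU(x)}$, and the derivative of $u$ along the flow involves $\na\theta$, $\na^2\bU$ and $\norm{y}^2$. The refreshment term contributes $\rate\{\exp(\kappa\bU(x))\int_{\msy}\varphi(\theta(x)\ps{w}{\na\bU(x)})\loiy(\rmd w) - V_1(x,y)\}$; since $\varphi$ is bounded by $1+c$, the integral is $\leq (1+c)$ while $\varphi \geq 1$, giving a term $\leq \rate(1+c)\exp(\kappa\bU(x)) - \rate V_1$. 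The bounce term $(\ps{y}{\na U(x)})_+\{V_1(x,\Refl(x,y)) - V_1(x,y)\}$ is the crucial one: using $\ps{\Refl(x,y)}{\na\bU(x)} = \ps{y}{\na\bU(x)} - 2\ps{y}{\rmn(\na U(x))}\ps{\rmn(\na U(x))}{\na\bU(x)}$ and the fact that $\na\bU = \psi'(U)\na U$ is parallel to $\na U$ (since $\bU = \psi\circ U$), one gets $\ps{\Refl(x,y)}{\na\bU(x)} = -\ps{y}{\na\bU(x)}$ whenever $\na U(x)\neq 0$, so the bounce exactly flips the sign of the argument of $\varphi$. Thus the bounce term equals $(\ps{y}{\na U(x)})_+\exp(\kappa\bU(x))\{\varphi(-u) - \varphi(u)\}$. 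The piecewise-linear shape \eqref{eq:def_varphi_1} of $\varphi$ (with $c - b \leq b - a$) is designed precisely so that $\varphi(-u) - \varphi(u) < 0$ for $u$ large positive, delivering a strong negative contribution proportional to $\ps{y}{\na U(x)}_+ \asymp \norm{\na U(x)}\norm{y}$ when $\ps{y}{\na\bU(x)}$ is large and positive; and when $u$ is large negative this term is positive but then $\Ddir{y} V_1$ carries the negative drift via the $\kappa\ps{y}{\na\bU}\varphi(u)$ piece (negative since $\ps{y}{\na\bU} < 0$ there). One splits $\rset^d\times\msy$ according to the sign and size of $u$ and according to whether $\poty(\norm{y}) \leq 3\bU(x)$ (i.e.\ $y \in \msa_x$); on the complementary set the $V_2$ term dominates via the $-\rate\exp(\poty(\norm{y}))$.

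The main obstacle is the bookkeeping of constants: one must choose $a,b,c,\varepsilon,\kappa$ and show that the positive contributions (from $\na\theta$ via \eqref{ass:geo_erg_general_eq_5}, from $\na^2\bU$ via \eqref{ass:geo_erg_general_eq_6}, from the refreshment term, and from the error terms $\pm\varepsilon$ in the definition of $\varphi$) are dominated by the negative bounce/transport drift, and this is exactly where the quantitative hypothesis \eqref{eq:condition_geo_4} relating $\rate, c_1, c_2, c_3, c_4, r, \delta$ is used. The estimates \eqref{ass:geo_erg_general_eq_1}--\eqref{ass:geo_erg_general_eq_3} control $\na\bU$ and $\na^2\bU$ relative to $\exp(\bU/4)$ and $U$ on the region $\norm{x} > R$, ensuring that $\exp(\kappa\bU(x))$ times the polynomial-in-$\na\bU,\na^2\bU$ error terms is absorbed for $\kappa$ small; away from infinity (the compact region $\norm{x}\leq R$ and bounded $u$) the generator $\generator V$ is bounded by continuity, so $\generator V \leq A_1(A_2 - V)$ holds there for $A_1$ small and $A_2$ large. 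Assembling these, with $\kappa$ chosen by \eqref{eq:def_kappa}, yields the global drift inequality \eqref{eq:drift_condition}. I would defer the explicit constant chase to the referenced section and here only lay out the splitting into these regions and the sign analysis of $\varphi(-u) - \varphi(u)$, which is the conceptual heart of the argument.
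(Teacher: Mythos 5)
Your overall strategy is the same as the paper's: split $V$ into the $\exp(\kappa \bU)\varphi$ part and the $\exp(\poty(\norm{y}))$ part, observe that a bounce flips the sign of $\ps{y}{\nabla \bU(x)}$ because $\nabla \bU$ is parallel to $\nabla U$, treat $y\notin\msa_x$ by letting the refreshment term $-\rate\exp(\poty(\norm{y}))$ dominate, and for $y\in\msa_x$ run a case analysis on the size and sign of $2\ell(x)\ps{y}{\nabla\bU(x)}/(rc_1)$, with \eqref{eq:condition_geo_4} guaranteeing that the constants can be chosen consistently. So the skeleton is right. But there is a genuine gap in the middle regime, which your sketch does not cover: when $\ps{y}{\nabla\bU(x)}$ is neither large positive nor large negative (the paper's case $2\ell(x)\theta/(rc_1)\in\ccint{-1,0}$ in particular, where $\theta$ may be essentially $0$), the transport term $\kappa\theta\varphi$ and the bounce term both vanish, and the crude refreshment bound you state for $V_1$, namely $\int_{\msy}\varphi\,\rmd\loiy\leq 1+c$ together with $\varphi\geq 1$, yields a \emph{nonnegative} contribution, hence no drift at all. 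The paper's proof gets negativity there from a sharper estimate: rotation invariance of $\loiy$ plus the mass condition $\int\1_{\coint{r,\plusinfty}}(y_1)\loiy(\rmd y)\geq\delta/2$ and $\norm{\nabla\bU(x)}\ell(x)\geq c_1$ force the argument of $\varphi$ below $-2$ on an event of probability $\geq\delta/2$, so $\int_{\msy}\varphi\,\rmd\loiy\leq 1+(1-\delta/2)c$; the profile \eqref{eq:def_varphi_1} is then designed with $c-b\leq\delta b/4$ so that $\varphi(\theta)\approx 1+b$ strictly exceeds this mean, producing the term $-a\rate\delta/8$. Without this mechanism (this is exactly where $r$ and $\delta$ from \Cref{ass:geo_erg_general}-\ref{ass:geo_erg_general_eq_4} enter) the drift inequality fails near $\theta=0$, no matter how the other regions are handled.

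Relatedly, deferring ``the explicit constant chase to the referenced section'' is not available here: the compatibility of the constraints is the content of the lemma, not a side computation. The constraints pull in opposite directions --- $\kappa$ must be small enough that the bounce regime beats the growth of $\exp(\kappa\bU)$ along the transport (case of large positive $\theta$, needing $\kappa\lesssim c_3(c-a)/c_2$), yet large enough that the transport decay beats the refreshment gain when $\theta$ is very negative (needing $rc_1\kappa/c_2\gtrsim\rate c$) --- and the paper resolves this by the explicit choices \eqref{eq:def_a}--\eqref{eq:def_vareps} and the verification, carried out in \Cref{lem:bound_J} over five cases, that \eqref{eq:condition_geo_4} makes them simultaneously satisfiable. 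Your proposal correctly identifies where \eqref{eq:condition_geo_4} must be used, but as written it asserts rather than proves that the bookkeeping closes, and it omits the one estimate (the refreshment-mean comparison above) that makes the neutral zone work.
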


 Inequality \eqref{eq:drift_condition} means that, away from a given compact set, in average, $V$ tends to decay along a trajectory of the BPS. Before proceeding into the details, let us give a brief explanation on the roles of the different parts of $V$ in this decay. 
 When $x$ has a large norm and $y\notin \msa_x$, the leading term of both
 $\lyap$ and $\generator\lyap$  is $\exp(\poty(\norm{y})) $, which appears in $\generator\lyap$, thanks to the refreshment operator,
 with the negative factor $-\rate$. In other words, when the scalar velocity is large, then it will typically decrease at the next refreshment time, so that $V$ will decrease. The main difficulty appears as
 $y\in \msa_x$.  
  The reason why $V$ should decrease in average depends on $\theta(x,y)=\ps{y}{ \na \bU(x)}$: when this is large  enough, the process is likely to bounce, which causes $\varphi(\theta)$ to change to $\varphi(-\theta)$, which is smaller, so that $V$ decreases. When $\theta$ is negative enough, the deterministic transport leads $\exp(\kappa \bU)$, hence $V$, to decrease. Finally, when $|\theta|$ is small, $\varphi(\theta)$ is close to 1, hence is larger than its mean with respect to $\loiy$, so that it can be expected to decrease at the next refreshment time. 
 
  Remark that, because of the operator  $f \mapsto \int_{\msy} f(\cdot,w) \rmd \loiy(w)$, the construction of $V$ at a point $(x,y)$ influences the value of $\generator V$ at all points  $\{(x,v)$, $v\in\msy \}$. Similarly, the term   $f( x,\Refl(x,y))$ is non-local. This yields contradictory constraints: for instance, when $\theta$ is large, while the bounce mechanism typically makes $\varphi(\theta)$ decrease, the deterministic transport leads  $\exp(\kappa \bU)$ to increase. Thus, in order for $V$ to decrease in average, we need $\kappa$ to be small enough. On the contrary, when $\theta$ is negative enough, $\exp(\kappa \bU)$ tends to decrease, but then $\varphi(\theta)$ is below its mean with respect to $\loiy$, so that it is expected to increase at the next refreshment time. Then we would like $\kappa$ to be large enough. The condition \eqref{eq:condition_geo_4} on the $c_i$'s and on $\rate$ ensures that the different constraints are compatible. 

\begin{proof}
  For ease of notation, we denote in the following  for any
  $(x,y) \in \rset^d \times \msy$
  $\theta(x,y) = \ps{\nabla \bU(x)}{y}$. From \eqref{eq:extended_generator_expression} and the facts that 
 $\nabla \bU(x) = \psi'(U(x)) \nabla U(x)$ and $\norm{\Refl(x,y) }= \norm{y}$, for any $(x,y) \in \rset^d \times \msy$,
\begin{multline}
  \label{eq:bound_lyap_gene_ini}
  \generator V(x,y)  = e^{\kappa \bU(x)} J(x,y) + \rate \defEns{\int_{\msy} e^{\poty(\norm{w})} \loiy( \rmd w) - e^{\poty(\norm{y})}} \eqsp,
\end{multline}
where

\begin{align}
\label{eq:bound_lyap_gene_ini_0}
&  J(x,y)  = \kappa \theta(x,y) \varphi\defEns{2\ell(x)\theta(x,y)/(rc_1)} \\
  \nonumber
  & + (2 /(rc_1)) \varphi'\defEns{2\ell(x)\theta(x,y)/(rc_1)}\parentheseDeux{ \ell(x) \ps{y}{ \nabla^2 \bU(x) y} +   \theta(x,y) \ps{\nabla \ell(x)}{y}}  \\
  \nonumber
         &   + \frac{\norm{\nabla U(x)}}{ \norm{\nabla \bU(x)}} \{ \theta(x,y)\}_+ \parentheseDeux{\varphi\defEns{-2\ell(x)\theta(x,y)/(rc_1)} - \varphi\defEns{2\ell(x)\theta(x,y)/(rc_1)}} \\
  \nonumber
         &   + \rate\defEns{\int_{\msv} \varphi\defEns{(2\ell(x)/(rc_1)) \ps{\nabla \bU(x)}{w}} \rmd \loiy(w)  - \varphi\defEns{2\ell(x)\theta(x,y)/(rc_1)}}\eqsp.
\end{align}

The first step of the proof is to show that there exist
$A_{1,1}, A_{1,2} >0$ such that
\begin{equation}
  \label{eq:proof_drift_bound_0_1}
  \text{
    $\generator V(x,y) \leq -A_{1,1} V(x,y) + A_{1,2}$ for any
    $(x,y)\in \rset^d\times \msy$, $y \not \in \msa_x$} \eqsp,  
\end{equation}
where
$\msa_x \subset \msy$ is defined by \eqref{eq:def_a_x}.  In a
second step, we show that there exist
$A_{2,1} ,A_{2,2} >0$ such that
\begin{equation}
  \label{eq:proof_drift_bound_0_2}
  \text{
    $\generator V(x,y) \leq -A_{2,1} V(x,y) + A_{2,2}$ for any
    $(x,y)\in \rset^d\times \msy$, $y  \in \msa_x$} \eqsp. 
\end{equation}
Note that if \eqref{eq:proof_drift_bound_0_1} and \eqref{eq:proof_drift_bound_0_2} hold, then the proof is concluded.

\textit{Proof of \eqref{eq:proof_drift_bound_0_1}.} Let $(x,y)\in \rset^d\times \msy$, $y \not \in \msa_x$. From \eqref{eq:bound_lyap_gene_ini_0} and the facts that $\varphi$ is bounded by $1+c$, that $\varphi(-s) - \varphi(s) \leq 0$ for any $s \in \rset_+$ since $\varphi$ is non-decreasing,  and that $\sup_{s \in \rset} \varphi'(s) \leq (a+\varepsilon) \vee b \vee ((c-b) + \varepsilon) \leq 1+c$ since $\varepsilon \leq 1$,  we have
\begin{multline}
  \label{eq:bound_J_1_0}
  J(x,y) \leq (1+c) \Big[ \kappa \norm{\nabla \bU(x)}\norm{y}  \\
\left.    +(2/(rc_1))\defEns{\norm{y} \norm{\nabla \ell(x)} + \ell(x) \norm{y}^2\norm{\nabla^2 U(x)}} + \rate \right] \eqsp.
\end{multline}
By  \eqref{ass:geo_erg_general_eq_3} and \eqref{ass:geo_erg_general_eq_5} and since $\ell \in \mrc^1(\rset^d)$, $ \norm{\nabla \ell}_{\infty} + \norm{\ell}_{\infty} < \infty$.  Therefore plugging 
\eqref{eq:bound_J_1_0} in \eqref{eq:bound_lyap_gene_ini_0} and using  \eqref{ass:geo_erg_general_eq_2} and \Cref{ass:geo_erg_general}-\ref{ass:geo_erg_general_eq_4}, we get
\begin{align}
  \nonumber
  \generator &V(x,y)  \leq  C_1 (1\vee \norm{y}^2)   \exp(5 \bU(x)/4) + C_2 - \rate \exp(H(\norm{y})) \eqsp,\\
  \nonumber
  C_1 &=  (1+c) \left\{(\kappa \normLigne{\nabla \bU \rme^{-\bU/4}}_{\infty}) \vee (2 \norm{\nabla \ell}_{\infty}/(rc_1)) \right.  \\
  \nonumber
  & \qquad \qquad \qquad \left. \vee \rate \vee (2 \normLigne{\nabla^2 \bU \rme^{\bU/4}}_{\infty} \norm{\ell}_{\infty} / (r c_1)) \right \} < \plusinfty \eqsp,
\end{align}
\vspace{-0.6cm}
\begin{equation}
\label{eq:def_c2_proof_drift}
  C_2 = \rate \int_{\msy} \exp(H(\norm{y})) \rmd \loiy(w) < \plusinfty \eqsp.
\end{equation}
Using now \Cref{ass:geo_erg_general}-\ref{ass:geo_erg_general_eq_4} and the continuity of $H$, we get that $C_3 = C_1\sup_{y \in \msy} (1\vee \norm{y}^2) \rme^{-H(\norm{y})/2}$ is finite. Since $y \not \in \msa_x$, $3 \bU(x) \leq H(\norm{y})$ and we obtain
\begin{align*}
  \generator V(x,y) &\leq C_3 \exp(11 H(\norm{y})/12) + C_2 - \rate \exp(H(\norm{y}))\eqsp,  \\
  & \leq -(\rate/2)  \exp(H(\norm{y})) + C_4 \eqsp, \, C_4 = C_2 + \sup_{s \in \rset_+} \{ C_3 \rme^{11 s /12} - \rate \rme^{s} \} \eqsp.
\end{align*}
The proof of  \eqref{eq:proof_drift_bound_0_1} follows upon noting that  $\kappa\leqslant 1$ and that $\varphi$ is bounded by $1+c$, so that  $V(x,y) \leq (2+c) \exp(H(\norm{y}))$ if $y \not \in \msa_x$.

\textit{Proof of \eqref{eq:proof_drift_bound_0_2}.}  We show in \Cref{lem:bound_J} below that there exist $a,b,c \in \rset_+$, $a\leq b \leq c$, $\varepsilon \in \ocint{0,1}$, $\kappa \in \ooint{0,1}$, $R_1 \in \rset_+$ and $\eta \in \rset_+^*$ such that for all $(x,y) \in \rset^d \times \msy$, $y \in \msa_x$ and $\norm{x} \geq R_1$, $  J(x,y) < -\eta$.
Note that if this result holds, then for all $(x,y) \in \rset^d \times \msy$, $y \in \msa_x$ and $\norm{x} \geq R_1$, by \eqref{eq:bound_lyap_gene_ini}, 
\begin{align}
  \nonumber
  \generator V(x,y) & \leq -\eta \exp(\kappa \bU(x)) + C_2 - \rate \exp(H(\norm{y})) \\
    \label{eq:proof_drift_bound_2_1}
  & \leq - \{(\eta/(1+c)) \wedge \rate\} V(x,y) +C_2  \eqsp,
\end{align}
where $C_2$ is given by \eqref{eq:def_c2_proof_drift} and we have used
for the last inequality that $\varphi$ is bounded by $1+c$. This result concludes the proof of \eqref{eq:proof_drift_bound_0_2} for $\norm{x} \geq R_1$.
It remains to consider the case $\norm{x} \leq R_1$.

Since $\psi$ and $U$ are continuous, so is $\bU$, so that there exists $M_1$
such that for all  $x \in \ball{0}{R_1}$ and $y\in \msa_x$, $H(\norm{y}) \leq M_1$. Since
$\sup_{w \in \msy} \norm{w}^2 \rme^{-H(\norm{w})} < \plusinfty$ by
\Cref{ass:geo_erg_general}-\ref{ass:geo_erg_general_eq_4}, it follows
that there exists $M_2$ such that for all
$x\in \ball{0}{R_1}$, $\msa_x \subset \ball{0}{M_2}$. Then, using that $\bU \in \rmc^2(\rset^d)$,
$\ell \in \rmc^1(\rset^d)$, $H \in \rmc(\rset_+)$ and
$\varphi \in \rmc^1(\rset)$ we get that there exists $C_5,C_6$ such
that for all $x \in \ball{0}{R_1}$ and $y \in \msa_x$, $\generator V(x,y) \leq C_5$ and
$ V(x,y) \leq C_6$. Combining this result and
\eqref{eq:proof_drift_bound_2_1} concludes the proof of
\eqref{eq:proof_drift_bound_0_2}.
\end{proof}
Let us now precise the parameters we chose in the definition of $V$. Set 
\begin{align}
  \label{eq:def_a}
  & a = 1 \wedge \parenthese{\parentheseDeux{(1/3) \wedge\{ \rate \delta r c_1 /(16 c_4) \} } \parentheseDeux{\{c_3/(4c_2)\} \wedge \{\rate \delta  c_3 / ( 100 r c_1)\}^{1/2}}}^{-1}
\end{align}
\begin{equation}
\label{eq_def_b}
  b-a =   a \parentheseDeux{(1/3) \wedge\{ \rate \delta r c_1 /(16 c_4) \} }
\end{equation}
\begin{align}
\label{eq:def_kappa}
&  \kappa = (b-a)\parentheseDeux{\{c_3/(4c_2)\} \wedge \{\rate \delta  c_3 / ( 100 r c_1)\}^{1/2}}\\
  \nonumber
&= a \parentheseDeux{(1/3) \wedge\{ \rate \delta r c_1 /(16 c_4) \} } \parentheseDeux{\{c_3/(4c_2)\} \wedge \{\rate \delta  c_3 / ( 100 r c_1)\}^{1/2}}
\end{align}
\begin{equation}
\label{eq:def_c}
      c-b = [\delta \rate a /(4(4c_4/(rc_2) + 2 \rate))  ] \wedge (b-a) \wedge [(b-a)c_3/(4\kappa c_2)] \wedge (\delta b /4) 
\end{equation}
\begin{equation}
  \label{eq:def_vareps}
  \varepsilon = (1/2)\wedge (c-b) \wedge (\kappa rc_1/4 ) \wedge (\rate c_2) \eqsp. 
\end{equation}
Note that $\kappa \leq 1$ and 
\begin{equation}
  \label{eq:bound_b_a_a}
0 \leq  c-b \leq  b-a  \leq a  \leq 1  \eqsp.
\end{equation}

\begin{lemma}
  \label{lem:bound_J}
Assume  \Cref{assum:hyp_base}-\Cref{ass:geo_ergo_1}-\Cref{ass:geo_erg_general} and \eqref{eq:condition_geo_4} hold.
Then for $a,b,c, \kappa, \vareps\in \ocint{0,1}$, given in \eqref{eq:def_a}-\eqref{eq_def_b}-\eqref{eq:def_c}-\eqref{eq:def_kappa}-\eqref{eq:def_vareps} respectively, there exist $\tilde{R},\eta>0$ such that for all $x \in \rset^d $ with $\norm{x} \geq \tilde{R}$ and all $y \in \msa_x$, $ J(x,y) < -\eta$, where $J$ and $\varphi$ are defined by \eqref{eq:bound_lyap_gene_ini_0} and \eqref{eq:def_varphi_1} respectively.
\end{lemma}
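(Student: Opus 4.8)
The plan is to reduce the estimate $J(x,y)<-\eta$ to a one–parameter analysis in the scalar $s=s(x,y):=2\ell(x)\theta(x,y)/(rc_1)$, which is the argument fed to $\varphi$ everywhere in \eqref{eq:bound_lyap_gene_ini_0}, and then to split according to whether $s\geq 1$ (the \emph{bounce} regime), $s\leq -1$ (the \emph{transport} regime), or $-1<s<1$ (the \emph{refreshment} regime). Before the case split I would collect a few facts valid for all $\norm{x}>R$ and $y\in\msa_x$. First, writing $\na\bU=\psi'(U)\na U$ and noting that $\psi'(U(x))>0$ for $\norm{x}$ large (otherwise $\bU=\psi\circ U$ could not tend to $+\infty$), the third line of \eqref{eq:bound_lyap_gene_ini_0} equals $\{\ps{\na U(x)}{y}\}_+\,[\varphi(-s)-\varphi(s)]$, which is $\leq 0$ whenever $\theta(x,y)\geq 0$ since $\varphi$ is non-decreasing. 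Second, \eqref{ass:geo_erg_general_eq_3} gives $2\ell(x)\norm{\na\bU(x)}/(rc_1)\geq 2/r$, so after rotating so that $\na\bU(x)$ is parallel to the first axis, the rotation invariance of $\loiy$ together with the mass bound $\int_\msy\1_{\coint{r,\plusinfty}}(y_1)\loiy(\rmd y)\geq\delta/2$ of \Cref{ass:geo_erg_general}-\ref{ass:geo_erg_general_eq_4} forces the refreshment integral in \eqref{eq:bound_lyap_gene_ini_0} to be $\leq(1+c)-(\delta/2)c$: $\varphi$ equals $1+c$ on $\{y_1\geq r\}$ and $1$ on $\{y_1\leq -r\}$, two events of probability at least $\delta/2$ each. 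Third, the second line of \eqref{eq:bound_lyap_gene_ini_0} is controlled uniformly: by \eqref{ass:geo_erg_general_eq_6} and $\norm{\varphi'}_\infty\leq 1$ the curvature part is $\leq 2c_4/(rc_1)$, while the $\na\ell$-part tends to $0$ as $\norm{x}\to\plusinfty$ uniformly over $y\in\msa_x$, using \eqref{ass:geo_erg_general_eq_5}, the fact that $\varphi'(s)\neq 0$ forces $\absolute{\theta(x,y)}\leq rc_1/\ell(x)$, and \eqref{ass:geo_erg_general_eq_3} to trade the factor $1/\ell(x)$ against $\norm{\na\bU(x)}/c_1$.

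\textbf{The three regimes.} In the bounce regime $s\geq 1$ one has $\varphi'(s)=0$, so the entire second line of \eqref{eq:bound_lyap_gene_ini_0} vanishes; the refreshment line is $\leq-\rate(\delta/2)c\leq 0$; and $\varphi(-s)-\varphi(s)\leq(1+a+\varepsilon)-(1+c)\leq-(b-a)$ because $\varepsilon\leq c-b$. Hence the bounce line is $\leq-(b-a)\ps{\na U(x)}{y}\leq-(b-a)(c_3/c_2)\theta(x,y)$ (by \eqref{ass:geo_erg_general_eq_3} and $\ell\leq c_2$), while the transport term is $\leq\kappa(1+c)\theta(x,y)$; since $\kappa\leq(b-a)c_3/(4c_2)$ by \eqref{eq:def_kappa} and $1+c\leq 2$, their sum is $\leq-(b-a)(c_3/(2c_2))\theta(x,y)\leq-(b-a)c_3rc_1/(4c_2^2)$, using $\theta(x,y)\geq rc_1/(2c_2)$. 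In the transport regime $s\leq-1$ the bounce line vanishes ($\theta_+=0$), the second line of \eqref{eq:bound_lyap_gene_ini_0} is handled by the uniform bounds above, and the transport term $\kappa\theta(x,y)\varphi(s)\leq\kappa\theta(x,y)\leq-\kappa rc_1/(2c_2)$ is negative and bounded away from $0$; choosing $\tilde R$ large enough that the $\na\ell$-part is $<\kappa rc_1/(4c_2)$ and using that \eqref{eq:condition_geo_4} keeps $2c_4/(rc_1)$ small relative to $\rate$ and $\kappa$, one obtains $J<-\eta$. In the refreshment regime $-1<s<1$: if $\theta(x,y)\leq 0$ the bounce line is $\leq 0$ and the transport term is $\leq 0$; if $\theta(x,y)\geq 0$ the bounce line is $\leq 0$ as well, and for $\theta(x,y)$ beyond a fixed ($\ell$-independent, via \eqref{ass:geo_erg_general_eq_3}) threshold the transport term is beaten by the strictly negative part $-\ps{\na U(x)}{y}\,s\,(c-a)/2$ of the bounce (using $\ps{\na U(x)}{y}\,s\geq(2c_3/(rc_1))\theta(x,y)^2$ and $\varphi(-s)-\varphi(s)\leq-s(c-a)+\varepsilon$); below that threshold the transport term is bounded by a quantity controlled by $\kappa$, and the genuinely negative refreshment contribution $\rate[(1+c-(\delta/2)c)-\varphi(s)]\leq\rate(c-a+\varepsilon-(\delta/2)c)<0$ (negative since $a$ is large relative to $c-b$ and $\varepsilon$, cf.\ \eqref{eq:def_a}--\eqref{eq:def_vareps}) dominates, once the curvature and $\na\ell$ parts are absorbed with $\tilde R$ large. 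Taking $\eta$ to be the smallest slack produced in the three regimes completes the argument.

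\textbf{Main obstacle.} The hard part is not any individual regime but the simultaneous bookkeeping: because the reflection operator $f\mapsto f(x,\Refl(x,y))$ and the refreshment operator $f\mapsto\int f(x,\cdot)\,\rmd\loiy$ are non-local in the velocity, the single parameter $\kappa$ is pulled in opposite directions — it must be small for the bounce to beat the transport growth when $s\geq 1$, whereas the refreshment margin $(1-\delta/2)c-a$ and the slope gap $b-a$, on which the other two regimes rely, are themselves functions of $\kappa$, the $c_i$'s and $\rate$ through \eqref{eq:def_a}--\eqref{eq:def_vareps}. Thus the real content is to check that the explicit choices \eqref{eq:def_a}, \eqref{eq_def_b}, \eqref{eq:def_kappa}, \eqref{eq:def_c}, \eqref{eq:def_vareps} respect the ordering \eqref{eq:bound_b_a_a} and that, under hypothesis \eqref{eq:condition_geo_4}, all the inequalities invoked above hold at once with a uniform $\eta>0$; condition \eqref{eq:condition_geo_4} is precisely the certificate that these competing constraints are compatible. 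The range $\norm{x}\leq\tilde R$ is outside the scope of this lemma; it is dealt with separately in the proof of \Cref{lem:lyapunov}.
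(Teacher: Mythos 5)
Your overall architecture is the paper's: the same reduction to the scalar $s=2\ell(x)\theta(x,y)/(rc_1)$, the same refreshment-integral bound $1+(1-\delta/2)c$ via rotation invariance and the mass condition, the same curvature bound $c_4$ from \eqref{ass:geo_erg_general_eq_6}, and a case split that groups the paper's five cases into three regimes. But two of your preliminary uniform claims do not follow from \Cref{ass:geo_erg_general}, and the regimes that lean on them fail as written. First, the claim that the $\nabla\ell$-part vanishes uniformly over $y\in\msa_x$ as $\norm{x}\to\infty$: on $\{\varphi'\neq 0\}$ you only get $|\theta(x,y)|\leq rc_1/\ell(x)$, so your bound is of order $\norm{\nabla\ell(x)}\,\norm{y}/\ell(x)$, and the proposed trade $1/\ell(x)\leq\norm{\nabla\bU(x)}/c_1$ goes the wrong way: it introduces the factor $\norm{\nabla\bU(x)}$, which is not bounded above (and typically diverges, e.g. for $\ell=1/(1+\norm{\nabla\bU})$ used in the proof of \Cref{theo:V_geo_ergo_loi_borne} under \Cref{ass:geo_ergo_2_b}); \eqref{ass:geo_erg_general_eq_5} controls $\norm{\nabla\ell(x)}\sup_{y\in\msa_x}\norm{y}$, not that quantity divided by $\ell(x)$. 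Hence in your transport regime you cannot make this term smaller than $\kappa rc_1/(4c_2)$ by enlarging $\tilde R$. The paper instead keeps the term as $\varepsilon|\theta|$ (with $\norm{\nabla\ell(x)}\norm{y}\leq\varepsilon$ for $\norm{x}\geq R_1$) and absorbs it into the coefficient of $\theta$, which is then beaten \emph{proportionally in $\theta$} by the transport term $\kappa\theta$, using $\varepsilon\leq\kappa rc_1/4$; this is exactly the $B_0,B_1,B_2$ bookkeeping of the paper's cases 1--3 and is the repair your sketch needs.

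Second, and more seriously, in the regime $0<s<1$ your bound $\varphi(-s)-\varphi(s)\leq -s(c-a)+\varepsilon$ is too lossy: the $+\varepsilon$ gets multiplied by the prefactor $\{\norm{\nabla U(x)}/\norm{\nabla\bU(x)}\}\,\theta(x,y)$, for which \eqref{ass:geo_erg_general_eq_3} provides only the lower bound $c_3/\ell(x)$ and no upper bound (under \Cref{ass:geo_ergo_2} this ratio diverges), and your ``fixed, $\ell$-independent threshold in $\theta$'' does not give a lower bound on $s$, since $s/\theta=2\ell(x)/(rc_1)$ can be arbitrarily small. So the sub-regime where $\theta$ is large but $s<2\varepsilon/(c-a)$ is uncovered: there the transport term $\kappa\theta\varphi(s)$ is unbounded, the bounce line is only known to be $\leq 0$, and the bounded refreshment margin cannot compensate. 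The paper avoids this by exploiting the exact linearity of $\varphi$ on $\ccint{-1,0}$ to get the $\varepsilon$-free bound $\varphi(-s)-\varphi(s)\leq\varphi(-s)-\varphi(0)\leq-(b-a)s$, so that the bounce contributes the $\ell$-free negative quadratic $-2c_3(b-a)\theta^2/(rc_1)$, which dominates \emph{all} the linear-in-$\theta$ terms through the discriminant-type inequality \eqref{eq:bound_J_case_4_4}, itself guaranteed by \eqref{eq:condition_geo_4}. A minor additional slip: in the bounce regime $1+c\leq 2$ is false ($c$ may be as large as $3$ by \eqref{eq:bound_b_a_a}); the correct conclusion there is that the coefficient of $\theta$ is merely $\leq 0$ (since $\kappa(1+c)\leq 4\kappa\leq (b-a)c_3/c_2$) and the strict negativity comes from the refreshment term $-\rate\delta c/2$, as in the paper's case 5 — harmless, but your stated chain does not hold.
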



\begin{proof}
In the proof, we first give a bound on $J$ for any $(x,y) \in \rset^d$, $y \in \msa_x$. Second, denoting again $\theta(x,y) = \ps{\nabla \bU(x)}{y}$ for $(x,y)\in \rset^d\times \msy$,  we distinguish five cases depending on the
value of  $ 2\ell(x) \theta(x,y)/(rc_1)$ which determines the
contribution of $\varphi$  and $\varphi'$ in $J$.

By \eqref{ass:geo_erg_general_eq_5}, there exists $R_1 \in \rset_+$
such that for any $(x,y) \in \rset^d$, $y \in \msa_x$, $\norm{x} \geq R_1$,
\begin{equation}
  \label{eq:bound_nabla_ell_bound_J}
  \norm{\nabla \ell(x)} \norm{y} \leq \vareps \eqsp.
\end{equation}

From \eqref{ass:geo_erg_general_eq_3}, $\norm{\nabla \bU(x)} \ell(x) \geq c_1$ for all $x \in \rset^d$ with $\norm{x} \geq R$. Using \Cref{ass:geo_erg_general}-\ref{ass:geo_erg_general_eq_4} and  the facts that $\loiy$ is rotation invariant and that $\varphi$ is non-decreasing, bounded by $1+c$ and equal to $1$ on $(-\infty,2]$, we then have for any $x \in \rset^d$ with $\norm{x} \geq R$
  \begin{align*}
    &    \int_{\msv} \varphi\defEns{\frac{2\ell(x)}{rc_1} \ps{\nabla \bU(x)}{w}} \rmd \loiy(w)  =     \int_{\msv} \varphi\defEns{\frac{2\ell(x)\abs{\nabla \bU(x)} w_1}{ rc_1}} \rmd \loiy(w) \\
    &  \leq                                                                                \int_{\msv} \1_{\ocint{-\infty,-r}}(w_1) \rmd \loiy(w) + (1+c)\int_{\msv} \1_{\ooint{-r,\plusinfty}}(w_1) \rmd \loiy(w)\\
&     \qquad \qquad  \qquad \qquad  \qquad \qquad \leq  1+(1-\delta/2)c \eqsp. 
  \end{align*}
Therefore, combining this result, \eqref{eq:bound_nabla_ell_bound_J}, \eqref{ass:geo_erg_general_eq_6} and the fact that $\varphi$ is non-decreasing so that $\varphi'(s) \geq 0$ for any $s \in \rset$,  we get, for any $x \in \rset^d$ with $\norm{x} \geq R_2 = R \vee R_1$ and all $y \in \msa_x$, 
\begin{align}
\nonumber
  &   J(x,y) \leq \kappa \theta(x,y) \varphi\defEns{2\ell(x)\theta(x,y)/(rc_1)} \\
  \nonumber
  & \qquad \qquad + (2 /(rc_1)) \varphi'\defEns{2\ell(x)\theta(x,y)/(rc_1)}\parentheseDeux{ c_4 +   \abs{\theta}(x,y) \vareps}  \\
  \nonumber
         &   + \frac{\norm{\nabla U(x)}}{ \norm{\nabla \bU(x)}} \{ \theta(x,y)\}_+ \parentheseDeux{\varphi\defEns{-2\ell(x)\theta(x,y)/(rc_1)} - \varphi\defEns{2\ell(x)\theta(x,y)/(rc_1)}} 
\end{align}
\vspace{-0.65cm}
\begin{align}
\label{eq:proof_bound_J_first_bound}
         &   \qquad \qquad + \rate\defEns{1+(1-\delta/2)c  - \varphi\defEns{2\ell(x)\theta(x,y)/(rc_1)}}\eqsp.
\end{align}

Let $(x,y) \in \rset^d \times \msy$, $y \in \msy$, $\norm{x} \geq R_2$. We consider now five cases. 

\textit{Case $1$ : $2 \ell(x)\theta(x,y)/(r c_1) \in \ocint{-\infty,-2}$.} Since for $s \in \ocint{-2,-\infty}$, $\varphi(s) = 1$, \eqref{eq:proof_bound_J_first_bound} reads
\begin{equation}
  \label{eq:case_1_0}
  J(x,y)  \leq \kappa \theta(x,y) + (1-\delta/2)\rate c \eqsp. 
\end{equation}
Using the facts that   $2 \ell(x)\theta(x,y)/(r c_1) \in \ocint{-\infty,-2}$, that    $\ell(z) \leq c_2$  for all $z \in \rset^d$ by \eqref{ass:geo_erg_general_eq_3},  that $(b-a)\vee (c-b) \leq a $ by \eqref{eq:bound_b_a_a}, that $a \leq r c_1 \kappa/ (6 \rate c_2)$ by \eqref{eq:def_kappa} and that  \eqref{eq:condition_geo_4} holds, we get 
\begin{equation*}
 rc_1 \kappa/(2\ell(x)) \geq  rc_1 \kappa/(2c_2) \geq 3 \rate a   \geq (1-\delta/2)\rate c   \eqsp.
\end{equation*}
By this result and \eqref{eq:case_1_0}, we obtain
\begin{equation}
  \label{eq:case_1_final}
  J(x,y) \leq -rc_1 \kappa/(2c_2)\eqsp.
\end{equation}

\textit{Case $2$ : $2 \ell(x)\theta(x,y)/(r c_1) \in \ooint{-2,-1}$.} By \eqref{eq:def_varphi_1}-\eqref{eq:def_varphi_2},
$  1+2a+sa -\varepsilon \leq \varphi(s)  \leq 1+2a+sa +\varepsilon$ and $\varphi'(s) \leq a+\varespilon \text{ for $s \in \ooint{-2,-1}$}$, so that \eqref{eq:proof_bound_J_first_bound} reads
\begin{align*}
  &  J(x,y)  \leq \kappa\theta(x,y)\{1+2a+2a\ell(x) \theta(x,y)/(rc_1) -\varepsilon\} \\
  & \qquad  + (2(a+\vareps)/(rc_1))\{c_4-\vareps\theta(x,y)\} \\
  & \qquad  + \rate \{(1-\delta/2)c -2a -2 a \ell(x) \theta(x,y)/(rc_1)+ \varepsilon\} \\
  & \qquad  \leq B_0 + B_1 \theta(x,y) + 2 \ell(x) B_2\theta(x,y)^2/(rc_1)\leq B_0 + (B_1-2\,B_2) \theta(x,y) \eqsp,
\end{align*}
where we have used that $2\ell(x) \theta(x,y) /(rc_1) \in \ooint{-2,-1}$ and that $\ell(x) \leq c_2$ by \eqref{ass:geo_erg_general_eq_3}, and defined
\begin{align*}
  B_0 &= 2(a+\vareps)c_4/(rc_1)+ \rate\{(1-\delta/2)c-2a+\vareps\}\\
  B_1 &= \kappa(1+2a-\varepsilon)-2 \rate a c_2/(rc_1)-2\vareps(a+\vareps)/(rc_1)  \\
        B_2 &= \kappa a  \eqsp. 
\end{align*}
First, \eqref{eq:def_vareps} and \eqref{eq:bound_b_a_a} ensures that $\varepsilon \leq  (1/2) \wedge a \wedge(\rate c_2)$, and therefore 
\begin{equation*}
 B_1 - 2\,B _2 \geq \kappa/2 - 4 \rate a c_2 /(rc_1) \geq \kappa/4\eqsp,
\end{equation*}
where we have used that $a \leq rc_1 \kappa /(16 \rate c_2)$ for the last inequality, which is a consequence of \eqref{eq:def_kappa} and \eqref{eq:condition_geo_4}. 
In particular, $B_1 \geq 2 B_2$ and using again that $2 \ell(x)\theta(x,y)/(r c_1) \in \ooint{-2,-1}$ and $\ell(x) \leq c_2$ from \eqref{ass:geo_erg_general_eq_3}, then
\begin{equation}
  \label{eq:ineq_J_case_2_0}
 J(x,y) \leq B_0+(rc_1/(2c_2))(2B_2-B_1) \leq B_0 - rc_1 \kappa/(8c_2)  \eqsp.
\end{equation}
Since $\vareps \leq a \wedge(c-b) $ by \eqref{eq:def_vareps},  $c-b \leq b-a$ by \eqref{eq:def_c} and $b-a \leq a/3$ by  \eqref{eq_def_b}, we have $  B_0 \leq 4ac_4/(rc_1)$. Hence, \eqref{eq:ineq_J_case_2_0} reads
\begin{equation}
    \label{eq:ineq_J_case_2_final}
  J(x,y) \leq  4ac_4/(rc_1)  - rc_1 \kappa /(8c_2)    \leq - rc_1 \kappa /(16c_2)    \eqsp,
\end{equation}
where we have used \eqref{eq:def_kappa} and  \eqref{eq:condition_geo_4}  for the last inequality.

\textit{Case $3$ : $2 \ell(x)\theta(x,y)/(r c_1) \in \ccint{-1,0}$.}
Using the expression of $\varphi$ on $\ccint{-1,0}$ given by \eqref{eq:def_varphi_1},  \eqref{eq:proof_bound_J_first_bound} reads
\begin{align}
\nonumber
  & J(x,y) \leq \kappa \theta(x,y) \{1+b+(b-a)2\ell(x)\theta(x,y)/(rc_1)\} \\
\nonumber
  & \qquad \qquad \qquad \qquad + (2(b-a)/(rc_1))\{c_4-\theta(x,y) \vareps\} \\
  \nonumber
  &\qquad \qquad \qquad \qquad  + \rate \{ (1-\delta/2)c - b -2\ell(x) \theta(x,y)(b-a)/(rc_1)\} \\
    \label{eq:ineq_J_case_3_0}  
  &  \leq B_0 + B_1 \theta(x,y) + B_2 2\ell(x) /(rc_1) \theta(x,y)^2  \leq B_0 + (B_1-B_2) \theta(x,y)  \eqsp,
\end{align}
where we have used that $2\ell(x) \theta(x,y) /(rc_1) \in \ccint{-1,0}$ and $\ell(x) \leq c_2$ by \eqref{ass:geo_erg_general_eq_3}, and defined
\begin{align*}
  B_0 &= 2(b-a)c_4/(rc_1) +\rate\{(1-\delta/2)c-b\} \\
  B_1 & = \kappa(1+b)-2(\vareps+\rate c_2)(b-a)/(rc_1) \\
  B_2 &= \kappa (b-a) \eqsp.
\end{align*}
First, since $c-b \leq \delta b /4 \leq \delta c/4$ and $a \leq c$ by \eqref{eq:def_c}  and \eqref{eq:bound_b_a_a}, we have
\begin{multline}
    \label{eq:ineq_B_0_case_3}
  B_0 \leq 2(b-a)c_4/(rc_1) -\rate \delta c/4 \leq  2(b-a)c_4/(rc_1) -\rate \delta a/4   \\\leq -a \rate \delta / 8 \eqsp,
\end{multline}
where we have used that $b-a \leq \rate \delta a r c_1/(16c_4)$ by \eqref{eq_def_b} for the last inequality.
Second, using $\vareps \leq \rate c_2$ by \eqref{eq:def_vareps}, $(b-a) \leq a/3 \leq 1/3$ by \eqref{eq_def_b}-\eqref{eq:def_a}, we have
\begin{align}
\nonumber
  B_2-B_1&  \leq \kappa (b-a) + 4 \rate c_2 (b-a) /(rc_1) - \kappa(1+b)  \\
  \label{eq:ineq_B_2_B_1_case_3}
         & \leq 4 \rate c_2a/(rc_1) -  \kappa \leq 0 \eqsp,
\end{align}
where we used the definition of $\kappa$ \eqref{eq:def_kappa} and the condition \eqref{eq:condition_geo_4} for the last inequality. 
Combining \eqref{eq:ineq_B_0_case_3} and \eqref{eq:ineq_B_2_B_1_case_3} in \eqref{eq:ineq_J_case_3_0}, we get
\begin{equation}
 \label{eq:ineq_J_case_3_final}
  J(x,y) \leq  -a \rate \delta / 8 
\end{equation}

\textit{Case $4$ : $2 \ell(x)\theta(x,y)/(r c_1) \in \ooint{0,1}$.}
First, note that since $\varphi(s) = 1+b+s(b-a)$ for $s \in \ccint{-1,0}$, and $\varphi$ is non-decreasing, we have for any $s \in \ccint{0,1}$,
\begin{equation*}
  \varphi(-s) - \varphi(s) \leq    \varphi(-s) - \varphi(0) \leq -(b-a)s\eqsp.
\end{equation*}
From this result and the fact by  \eqref{eq:def_varphi_1}-\eqref{eq:def_varphi_2} that 
$  1+b+s(c-b) -\varepsilon \leq \varphi(s)  \leq 1+b+s(c-b) +\varepsilon$ and $\varphi'(s) \leq c-b+\varespilon \text{ for $s \in \ooint{0,1}$}$
we get that \eqref{eq:proof_bound_J_first_bound} reads
\begin{align*}
  &J(x,y) \leq \kappa \theta(x,y) \defEns{1+ b +2\ell(x)\theta(x,y)(c-b+\varepsilon)/(rc_1)+ \varepsilon} \\
  &\qquad \qquad  + (2(c-b+\varepsilon)/(rc_1))\defEns{c_4 + \theta(x,y) \varepsilon}\\
  & \qquad \qquad -(\norm{\nabla U(x)}/\norm{\nabla \bU(x)}) 2\ell(x)(b-a)\theta(x,y)^2/(rc_1)\\
  & \qquad \qquad   + \rate \{1+(1-\delta/2)c -1-b -2\ell(x)\theta(x,y)(c-b-\varepsilon)/(rc_1)+ \varepsilon\}\\
  &\qquad \qquad  \leq B_0 + B_1 \theta(x,y) + 2 \ell(x) B_2 \theta(x,y)^2/(rc_1) \eqsp,
\end{align*}
where we have used that $(\norm{\nabla U(x)}/\norm{\nabla \bU(x)}) \ell(x) \geq c_3$ by \eqref{ass:geo_erg_general_eq_3}, $\theta(x,y) \geq 0$ and defined 
\begin{align*}
  B_0 &= 2c_4(c-b+\varepsilon)/(rc_1)+\rate\{(1-\delta/2)c-b+\varepsilon\}\\
  B_1 &= \kappa(1+b+\vareps) +2\vareps (c-b+\varepsilon)/(rc_1) \\
  B_2 &= \{\kappa (c-b+\varepsilon) -  c_3(b-a)/\ell(x)\} \eqsp.
\end{align*}
Since $\varepsilon \leq c-b$ by \eqref{eq:def_vareps}, $\ell(x) \leq c_2$ by \eqref{ass:geo_erg_general_eq_3} and $ 2 \kappa c_2 (c-b) \leq   c_3(b-a)/2$ by \eqref{eq:def_c}, we get
\begin{equation}
  \label{eq:def_tilde_b_2}
  B_2 \leq -\tilde{B}_2 =-c_3(b-a) /(2\ell(x)) \eqsp,
\end{equation}
and therefore
\begin{equation*}
J(x,y) \leq B_0 + B_1 \theta(x,y) - 2\ell(x) \tilde{B}_2 \theta(x,y)^2/(rc_1) \eqsp.
\end{equation*}
Then, using that  $s \mapsto C_1s - C_2
s^2$ is bounded  by $C_1^2/(2C_2)$ on $\rset$, we obtain
\begin{equation*}
J(x,y) \leq B_0 +\theta(x,y)    rc_1B_1^2/(4 \ell(x) \tilde{B}_2)
\end{equation*}
Therefore, since $\theta(x,y) \in \ooint{0,1}$, to show that
\begin{equation}
  \label{eq:bound_J_case_4_final}
  J(x,y) \leq -\rate \delta c
/16 \eqsp,
\end{equation}
it is sufficient to prove  that
\begin{align}
  \label{eq:bound_J_case_4_1}
  B_0 &\leq -\rate \delta c /4 \\
      \label{eq:bound_J_case_4_3}
 rc_1B_1^2/(4 \ell(x) \tilde{B}_2) & \leq \rate \delta c /8  \eqsp. 
\end{align}
First \eqref{eq:bound_J_case_4_1} holds since using that $\varespilon \leq (c-b)$ by \eqref{eq:def_vareps} and that $a \leq c$, we have
\begin{multline*}
B_0 -\delta/4 =  2c_4(c-b+\varepsilon)/(rc_1)+\rate\{(1-\delta/4)c-b+\varepsilon\} \\\leq (4c_4/(rc_2)+2 \rate)(c-b)  -\delta a \rate /4 \leq 0 \eqsp,
 \end{multline*}
using $   (c-b)  \leq \delta a \rate /(4 (4c_4/(rc_2)+2\rate) )$ by  \eqref{eq:def_c} for the last inequality. 
 It remains to establish \eqref{eq:bound_J_case_4_3} which is equivalent by definition of $B_1$ and $\tilde{B}_2$ \eqref{eq:def_tilde_b_2} to
 \begin{equation}
   \label{eq:bound_J_case_4_4}
 \kappa(1+b+\vareps) +2\vareps (c-b+\varepsilon)/(rc_1) \leq  \{\rate c   \delta c_3 (b-a)/ (4 rc_1)\}^{1/2} \eqsp. 
 \end{equation}
Since  $\vareps \leq 1 \wedge (\kappa r c_1/4 )$ by \eqref{eq:def_vareps}, $c-b \leq 1$ and  $b \leq 2$ by \eqref{eq:bound_b_a_a} and \eqref{eq:def_a},   we get
  \begin{equation*}
 \kappa(1+b+\vareps)  +2\vareps (c-b+\varepsilon)/(rc_1) \leq 5 \kappa  \eqsp. 
\end{equation*}
This result, the inequality  $b-a \leq c $ and the definition of $\kappa$ \eqref{eq:def_kappa}  implies that \eqref{eq:bound_J_case_4_4} holds.

\textit{Case $5$ : $2 \ell(x)\theta(x,y)/(r c_1) \geq 1$.} Since by \eqref{eq:def_varphi_1}, $\varphi(s) = 1+c$, $\varphi'(s) = 0$ and $\varphi(-s)-\varphi(s) \leq a-c$ for $s \geq 1$, \eqref{eq:proof_bound_J_first_bound} reads
\begin{align}
  \nonumber
&  J(x,y) \leq \kappa \theta(x,y) (1+c)  -\{\norm{\nabla U(x)} / \norm{\nabla \bU(x)}\} \theta(x,y) (c-a) - \rate \delta c/2 \\
\nonumber
        & \leq  \kappa \theta(x,y) (1+c)  -\{\norm{\nabla U(x)} \ell(x)/ (c_2 \norm{\nabla \bU(x)})\} \theta(x,y) (c-a) - \rate \delta c/2 \\
  \nonumber
  &\qquad \leq  \{\kappa (1+c) - c_3(c-a) /c_2\} \theta(x,y) - \rate \delta c/2 \eqsp,
\end{align}
where we have used by \eqref{ass:geo_erg_general_eq_3} that  $\ell(x) \leq c_2$ and 
$\norm{\nabla U(x)} \ell(x) \norm{\nabla \bU(x)}^{-1} \geq c_3$. From $c \leq 3$ by \eqref{eq:bound_b_a_a} we obtain
\begin{align}
  \nonumber
  J(x,y)& \leq  \{\kappa (1+c) - c_3(c-a) /c_2\} \theta(x,y) - \rate \delta c/2 \\
  \label{eq:proof_bound_J_case_1_final1}
        &\leq \{4 \kappa - c_3(b-a) /c_2\} \theta(x,y) - \rate \delta c/2  \leq -\rate \delta c/2 \eqsp,
\end{align}
where we have used the definition of $\kappa$ given by  \eqref{eq:def_kappa} and $\theta(x,y) \geq 0$ for the last inequality.

The proof follows from combining  \eqref{eq:case_1_final}-\eqref{eq:ineq_J_case_2_final}-\eqref{eq:ineq_J_case_3_final}-\eqref{eq:bound_J_case_4_final}-\eqref{eq:proof_bound_J_case_1_final1}.

\end{proof}

\begin{corollary}
\label{Corollary-lyap}
Under \Cref{ass:geo_erg_general}, for all $(x,y)\in \rset\times\msy$ and $t\geqslant 0$,
\begin{equation*}
 P_t V(x,y)  \leqslant  V(x,y) \rme^{-A_1 t} +  A_2( 1- \rme^{-A_1 t}). 
\end{equation*}
where $V$ is given by \eqref{eq:def_lyap_gene} and $A_1,A_2$ are given by \Cref{lem:lyapunov}.
\end{corollary}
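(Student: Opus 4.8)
The plan is to deduce the stated integrated bound from the pointwise Foster--Lyapunov inequality \eqref{eq:drift_condition} of \Cref{lem:lyapunov} by a Dynkin-type argument, the only real care being that the Dynkin martingale attached to $V$ is a priori only a \emph{local} martingale, which forces a localization and a Fatou step. First I would record that the function $V$ of \eqref{eq:def_lyap_gene} lies in $\domain(\generator)$: since $\bU\in\mrc^2(\rset^d)$, $\ell\in\mrc^1(\rset^d)$ and $\varphi\in\mrc^1(\rset)$, the map $x\mapsto V(x,y)$ is $\mrc^1$ for each $y\in\msy$, hence $V\in\mse_1$; and since $\varphi\leqslant 1+c$ while $\int_{\msy}\rme^{\poty(\norm{w})}\loiy(\rmd w)<\infty$ by \Cref{ass:geo_erg_general}-\ref{ass:geo_erg_general_eq_4}, we have $\int_{\msy}\absLigne{V(x,w)}\loiy(\rmd w)<\infty$ for all $x$, so the argument following \eqref{eq:condition_generator} (with $\sigma_n=S_n\wedge n\wedge\upsilon_n$, $\upsilon_n=\inf\ensembleLigne{t\geqslant 0}{\norm{X_t}\geqslant n}$) shows $V\in\mse_2$. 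Thus $\generator V$ is given by \eqref{eq:extended_generator_expression}, and \eqref{eq:drift_condition} reads, in particular, $A_1 V+\generator V\leqslant A_1 A_2$ everywhere.

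Next I would run a time-dependent Dynkin (product-rule) computation on $t\mapsto\rme^{A_1 t}V(X_t,Y_t)$: applying the Davis-type description of the extended generator to the time-augmented PDMP $(t,X_t,Y_t)_{t\geqslant 0}$, whose generator sends $(t,x,y)\mapsto\rme^{A_1 t}V(x,y)$ to $\rme^{A_1 t}(A_1 V+\generator V)(x,y)$, one gets that
\begin{equation*}
  M_t = \rme^{A_1 t}V(X_t,Y_t) - V(X_0,Y_0) - \int_0^t \rme^{A_1 s}(A_1 V + \generator V)(X_s,Y_s)\,\rmd s
\end{equation*}
is a local martingale under $\PP_{(x,y)}$ with $M_0=0$, reduced by the same sequence $(\sigma_n)_n$ as above (the extra factor $\rme^{A_1 s}\leqslant\rme^{A_1 n}$ on $\{s\leqslant\sigma_n\}$ being harmless in \eqref{eq:condition_generator}). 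Using $A_1 V+\generator V\leqslant A_1 A_2$ and $\int_0^t A_1 A_2\rme^{A_1 s}\,\rmd s=A_2(\rme^{A_1 t}-1)$ we obtain, for all $t\geqslant 0$ and $n\in\nset^*$,
\begin{equation*}
  \rme^{A_1 (t\wedge\sigma_n)}V(X_{t\wedge\sigma_n},Y_{t\wedge\sigma_n}) \leqslant V(x,y) + M_{t\wedge\sigma_n} + A_2(\rme^{A_1 t}-1)\eqsp,
\end{equation*}
and taking $\PP_{(x,y)}$-expectation, since $\expeMarkov{(x,y)}{M_{t\wedge\sigma_n}}=0$, this gives $\expeMarkov{(x,y)}{\rme^{A_1(t\wedge\sigma_n)}V(X_{t\wedge\sigma_n},Y_{t\wedge\sigma_n})}\leqslant V(x,y)+A_2(\rme^{A_1 t}-1)$.

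Finally I would let $n\to\plusinfty$: since $\sup_n S_n=\plusinfty$ $\PP_{(x,y)}$-a.s.\ (non-explosion, recalled in \Cref{sec:presentation-bps}) and $X$ moves at constant speed $\norm{Y}$ between jumps, $X$ stays bounded on every finite time interval, so $\upsilon_n\to\plusinfty$ and hence $\sigma_n\to\plusinfty$ a.s.; by \cadlag~regularity of $(X_\cdot,Y_\cdot)$ and continuity of $V\geqslant 1>0$, $\rme^{A_1(t\wedge\sigma_n)}V(X_{t\wedge\sigma_n},Y_{t\wedge\sigma_n})\to\rme^{A_1 t}V(X_t,Y_t)$ a.s., so Fatou's lemma yields $\rme^{A_1 t}P_t V(x,y)=\rme^{A_1 t}\expeMarkov{(x,y)}{V(X_t,Y_t)}\leqslant V(x,y)+A_2(\rme^{A_1 t}-1)$, i.e.\ the claim after dividing by $\rme^{A_1 t}$. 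I expect this localization/Fatou step to be the main (and essentially only) obstacle: one must check that the stopped process $M_{\cdot\wedge\sigma_n}$ is a genuine martingale so that its expectation vanishes, and that $\sigma_n\to\plusinfty$ $\PP_{(x,y)}$-a.s., both of which rest on the explicit reducing sequence of $\domain(\generator)$ and on the almost sure non-explosion of the BPS, whereas the drift inequality \eqref{eq:drift_condition} itself enters only through the trivial bound $A_1 V+\generator V\leqslant A_1 A_2$.
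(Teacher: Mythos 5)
Your proof is correct and follows essentially the same route as the paper: both form the local martingale attached to $\rme^{A_1 t}V(X_t,Y_t)$ (the paper cites \cite[Section 31.5]{davis:1993} for this, with stopping times $\tau_n=\inf\{t\geq 0:\norm{X_t}+\norm{Y_t}\geq n\}$), apply the drift bound $A_1V+\generator V\leq A_1A_2$ from \Cref{lem:lyapunov} after localization, and pass to the limit $n\to\plusinfty$. Your additional checks ($V\in\domain(\generator)$, the Fatou step) only make explicit what the paper leaves implicit.
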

\begin{proof}
  By \cite[Section 31.5]{davis:1993}, since $V \in \domain(\generator)$,  the process $(M_t)_{t \geq 0}$, defined for any $t \in \rset_+$ by
  \begin{equation*}
M_t = \rme^{A_1 t} V(X_t,Y_t) - V(x,y) - \int_0^t\defEns{A_1 \rme^{A_1 s}  V(X_s,Y_s) + \rme^{A_1 s} \generator V(X_s,Y_s)}  \rmd s \eqsp,
\end{equation*}
is a local martingale. Therefore  $(M_{t\wedge \tau_n})_{t \geq 0}$ is a martingale where for all $n \in \nsets$, $\tau_n = \inf\{t \geq 0 \, : \, \norm{X_t}+\norm{Y_t} \geq n\}$ and 
  \begin{multline*}
    \expe{ \rme^{A_1 (t\wedge \tau_n)} V(X_{t\wedge \tau_n},Y_{t\wedge \tau_n})} -  V(x,y)\\
     =  \expe{\int_0^{t\wedge\tau_n}\rme^{A_1 s}\{A_1V(X_s,Y_s)+ \generator V(X_s,Y_s)\} \rmd s} \\
    \leq \expe{\int_0^{t\wedge\tau_n}\rme^{A_1 s} A_1 A_2 \rmd s}\, \leq \, A_2\left(\rme^{A_1 t}-1\right)\eqsp.
  \end{multline*}
  Letting $n$ go to infinity concludes the proof since it yields
  \begin{equation*}
   \rme^{A_1 t}\expe{  V(X_{t},Y_{t})} \, \leq\,  V(x,y) + A_2\left(\rme^{A_1 t}-1\right)\eqsp.
  \end{equation*}
\end{proof}

\subsection{Mirror Coupling}
\label{sec:couplage}

To obtain geometric ergodicity, the classical Meyn and Tweedie
approach is, once a Lyapunov drift condition holds, to show a Doeblin condition for
some $\msc \subset \rset^d \times \msy$, i.e. that the following holds : there exist
$t >0$, $\varepsilon >0$ and $\nu\in \mcp(\rset^d \times \msy)$, such that
\begin{equation*}
 P_t((x,y), \msa) \geq \varepsilon \nu(\msa) \eqsp \text{ for all
$\msa \in \mcb{\rset^d \times \msy}\,, (x,y)\in\msc $} \eqsp. 
\end{equation*}
A set  $\msc $ that satisfies this is called a small set. 
\begin{lemma} 
\label{lem:condition-doeblin}
Assume \Cref{assum:hyp_base} and \Cref{ass:geo_ergo_1}-\ref{ass:geo_ergo_1_loiy}. Then, any compact set $\msk \subset  \rset^d \times \msy$  is a small set.
\end{lemma}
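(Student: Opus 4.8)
The plan is to obtain the Doeblin minorization directly from the transition kernel of a single BPS, exploiting the refreshment mechanism; equivalently this produces a coupling of two copies started in $\msk$ that coincide at time $t$ with probability at least $\varepsilon$. Write $\msk\subset\ball{0}{M}\times(\msy\cap\ball{0}{M})$ for some $M>0$, and fix a large horizon $t>0$, disjoint time windows $0<a_1<b_1<\dots<a_k<b_k<t$, a radius $R_0>0$ with $\loiy(\ball{0}{R_0})>0$ (replaced below by $r_0\sphere^d$ when $\loiy(r_0\sphere^d)>0$), and $k=3$ if $\loiy$ has a density $h$, $k=2$ in the spherical case. Let $\Gamma$ be the event on which the first $k$ jumps are all refreshments, the $j$-th occurring at a time $S_j\in(a_j,b_j)$ with refreshed velocity $v_j\in\ball{0}{R_0}$, and no jump occurs on $(S_k,t)$. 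On $\Gamma$ one has $Y_t=v_k$ and $X_t=x+S_1y+\sum_{j=1}^{k-1}(S_{j+1}-S_j)v_j+(t-S_k)v_k$.

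First I would bound $\PP_{(x,y)}(\Gamma)$ below, uniformly over $(x,y)\in\msk$. Since bounces preserve $\norm{Y_s}$, on $\Gamma$ the trajectory stays in $\ball{0}{M+R_0t}$ up to time $t$; by \Cref{assum:hyp_base} the map $\nabla U$ is continuous, hence bounded on that ball, so the bounce rate $\ps{Y_s}{\nabla U(X_s)}_+$ is dominated by a constant $L=L(M,R_0)$. Combining this with the Poisson law of the refreshments (rate $\rate$) and the fact that between refreshments the bounce time has survival function $\exp(-\int_0^\cdot\ps{Y_s}{\nabla U(X_s)}_+\,\rmd s)$, each requirement defining $\Gamma$ has conditional probability bounded below by a positive constant depending only on $\rate,L,t,(a_j,b_j)_j$ and $\loiy(\ball{0}{R_0})$; their product gives the desired $\beta>0$.

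Next I would isolate an absolutely continuous component of the law of $(X_t,Y_t)$ on $\Gamma$, with respect to $\Leb_{\rset^d}\otimes\loiy$. Disintegrating over $Y_t=v_k$ and integrating out the free velocities $v_1,\dots,v_{k-1}$, this component has a density $q_{x,y}(z,w)$ which, up to a uniform constant and an average over the refreshment times, equals: for $k=3$ (and fixed $s_1<s_2<s_3$), the density at $z-x-s_1y-(t-s_3)w$ of $\alpha_1v_1+\alpha_2v_2$ where $\alpha_1=s_2-s_1$, $\alpha_2=s_3-s_2$ and $v_1,v_2$ are independent and $\loiy|_{\ball{0}{R_0}}$-distributed; for $k=2$, a surface integral of the uniform density on $r_0\sphere^d$, supported on the open set where $(s_1,s_2,v_1)\mapsto x+s_1y+(s_2-s_1)v_1+(t-s_2)w$ is a submersion onto $\rset^d$ (the radial direction being supplied by $s_2$). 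In either case $q_{x,y}(z,w)$ is regular enough in $(x,y)$ — lower semicontinuous — that $\bar q:=\inf_{(x,y)\in\msk}q_{x,y}$ is measurable. The crucial claim is that $\bar q>0$ on a fixed nonempty open set $N$: in the density case the Steinhaus theorem shows $\{h>0\}+\{h>0\}$ has nonempty interior, so the rescaled convolution above is bounded below on a ball whose radius grows linearly in $t$, whereas the shift $x+s_1y+(t-s_3)w$ ranges only over a ball of radius $O(M)$ as $(x,y)$ varies over $\msk$; hence, for $t$ large enough, a whole neighbourhood $N$ of a fixed target lies in the good region for every $(x,y)\in\msk$. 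The spherical case is similar and more direct, the uniform density on $r_0\sphere^d$ being constant.

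Combining the two steps, $P_t((x,y),\cdot)\geq\beta\,\bar q\cdot(\Leb_{\rset^d}\otimes\loiy)\geq\beta Z\,\nu$ for all $(x,y)\in\msk$, where $Z:=\int_N\bar q\,\rmd(\Leb_{\rset^d}\otimes\loiy)\in(0,\infty)$ and $\nu:=Z^{-1}\1_N\bar q\cdot(\Leb_{\rset^d}\otimes\loiy)$ is a probability measure on $\rset^d\times\msy$; this is the asserted small-set property. I expect the main obstacle to lie in the second step: establishing the existence and the regularity (lower semicontinuity, so that $\bar q$ is measurable) of this transition density, and — above all — producing a target region $N$ reachable with uniformly positive density from every point of the compact set $\msk$. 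It is exactly to secure this last uniformity that one conditions the refreshed velocities to a fixed ball, uses $k\geq3$ refreshments in the density case so that the Steinhaus theorem applies to the relevant convolution, and takes the horizon $t$ large.
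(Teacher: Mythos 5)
Your route is viable, but it is not the argument the paper actually writes out: what you reconstruct is the classical ``density after a few refreshments'' minorization that the paper explicitly attributes to \cite[Lemma 5.2]{MonmarcheRTP} and \cite[Lemma 2]{Doucet2017} and invokes by reference (``after two refreshment events the distribution of $X_t$ has some density w.r.t.\ the Lebesgue measure on a ball with a radius proportional to $t$''), declining to redo it because it yields non-explicit constants with poor dimension dependence. The paper's own detailed content in \Cref{sec:couplage} is a different argument: an explicit mirror coupling of the refreshed velocities (\Cref{LemMiroir}, \Cref{LemCouplageCompact}, \Cref{lem:non_gaussian_coupling}), valid only when $\loiy$ is non-singular (Gaussian case emphasized), which produces directly a quantitative coupling/total-variation bound (\Cref{lem:condition-doeblin-precis}) — a condition slightly weaker than Doeblin but sufficient together with the drift, and whose explicitness is what the toy-model and annealing sections (\Cref{sec:prec-expl-bound}, \Cref{sec:metast-regime-anne}) require. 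So your approach buys a self-contained proof covering both branches of \Cref{ass:geo_ergo_1}-\ref{ass:geo_ergo_1_loiy} (note the spherical branch uses the rotation invariance of \Cref{assum:hyp_base} to know that $\loiy$ conditioned on $r_0\sphere^d$ is uniform, which you use implicitly), while the paper's buys explicit, trackable rates on a smaller class of refreshment laws. Two points in your sketch need tightening, though neither requires a new idea: (i) do not rely on lower semicontinuity of an exact density $q_{x,y}$ (a density of an absolutely continuous component is only defined a.e., and a chosen version need not be l.s.c.); the standard fix is to minorize from the start by an explicit continuous function, e.g.\ a constant times $\1_{A_1}*\1_{A_2}$ evaluated at the affine shift, whose continuity in all variables gives the uniform bound over $\msk$ and makes the infimum step unnecessary; (ii) the region where that convolution is bounded below has size proportional to the inter-refreshment gaps, not to $t$ itself, so you must spread the windows $(a_j,b_j)$ on a scale of order $t$ (keeping the last window short and the velocity component of $N$ a small ball or cap) for your ``large $t$'' absorption of the variation of $x+s_1y+(t-s_k)w$ over $\msk$ to go through.
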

Previous works \cite{MonmarcheRTP,Doucet2017} establish
\Cref{lem:condition-doeblin} in the case where $\msy=\sphere^d$. The
proof relies on the fact that after two
refreshment events the distribution of $X_t$
has some density \wrt~the Lebesgue density on a ball with a radius
proportional to $t$. 
Nevertheless, the latter
strategy yields a non-explicit rate of convergence. In particular the
dependence of the obtained rate in the dimension of the space is
either intractable or very rough.

For this reason, we will present a different argument, based on an explicit coupling of two BPS processes. However, this will only work under the assumption that $\loiy$ is not singular with respect to the Lebesgue measure on $\R^d$, which rules out, for example, the case of the uniform measure on $\sphere^d$. A general proof of \Cref{lem:condition-doeblin}, with no additional assumption on $\loiy$, may be obtained by a straightforward adaptation of \cite[Lemma 5.2]{MonmarcheRTP} or \cite[Lemma 2]{Doucet2017}. We will only treat the non-singular case, with a particular emphasis on the case where $\loiy$ is a $d$-dimensional non-degenerate Gaussian distribution with zero-mean and covariance matrix $\Sigma$.

 The aim of the rest of this section is to establish the following coupling condition:
for any compact set $\msc \subset \rset^d \times \msy$, there exist
$t >0$, $\varepsilon >0$ such that for all $(x,y),(\tx,\ty) \in \msc$,
\begin{equation*}
 \tvnorm{P_t((x,y), \cdot) - P_t((\tx,\ty), \cdot)}  \leq 2(1-\varepsilon) \eqsp.
\end{equation*}
This is clearly implied by \Cref{lem:condition-doeblin}. However,
 in order to get good explicit rates of convergence, it may be
more efficient to establish directly a coupling condition, which can then be
directly used to obtain quantitative estimates (see for instance \Cref{thm:ThmCVexpo} in Appendix B and the exemple in \Cref{sec:prec-expl-bound}) . 


Before stating our main result, we need the following lemma
concerning the reflexion coupling (see \cite{lindvall:rogers:1986},
\cite{eberle:2015} and references therein) between two $d$ standard
Gaussian random variables with different means.

\begin{lemma}\label{LemMiroir}
  Let $x^{(1)},x^{(2)} \in \rset^d$, $\Sigma_{\mathrm{R}}$ be a positive definite
  matrix and $(W^{(1)}_t)_{t \geq 0}$ be a standard one dimensional
  Brownian motion. Define $T_{\mrcc} = \inf \defEnsLigne{t \geq 0 \, : \, W^{(1)}_t \geq  \normLigne{\Sigma_{\mathrm{R}}^{-1/2}(x^{(2)}-x^{(1)})}/2}$, the stochastic process $(W^{(2)}_t)_{t \geq 0}$ by
    \begin{equation*}
      W^{(2)}_t =
      \begin{cases}
        -W^{(1)}_t & \text{ if $t \leq T_{\mrcc}$} \\
      -\normLigne{\Sigma_{\mathrm{R}}^{-1}(x^{(2)}-x^{(1)})} + W^{(1)}_t & \text{otherwise} \eqsp,
      \end{cases}
    \end{equation*}
and the $d$-dimensional random variables 
\begin{align*}
  G^{(1)} &= W^{(1)}_1 \mrn\defEns{\Sigma_{\mathrm{R}}^{-1/2}(x^{(2)}-x^{(1)})}  + G_{\mathrm{P}} \eqsp, \\
  G^{(2)} &=  W^{(2)}_1 \mrn\defEns{\Sigma_{\mathrm{R}}^{-1/2}(x^{(2)}-x^{(1)})} +G_{\mathrm{P}} \eqsp,\\
          G_{\mathrm{P}} &= \parenthese{\Id -\mrn\defEns{\Sigma_{\mathrm{R}}^{-1/2}(x^{(2)}-x^{(1)})} \mrn\defEns{\Sigma_{\mathrm{R}}^{-1/2}(x^{(2)}-x^{(1)})}^{\transpose}} G\eqsp,
\end{align*}
where $G$ is a standard $d$-dimensional Gaussian random variable independent of $(W^{(1)}_t)_{t \geq 0}$ and $\rmn$ is given by~\eqref{eq:definition_Refl}. Then $G^{(1)}$ and $G^{(2)}$ are $d$-dimensional standard Gaussian random variables and for all $M \geq 0$,
\begin{multline*}
  \hspace{-.2cm}
\proba{x^{(1)}+ \Sigma_{\mathrm{R}}^{1/2} G^{(1)} = x^{(2)}+\Sigma_{\mathrm{R}}^{1/2} G^{(2)} \, , \, \norm{G^{(1)}-\Sigma_{\mathrm{R}}^{-1/2}(x^{(2)}-x^{(1)})/2} \leq M } \\= \talpha(\normLigne{\Sigma_{\mathrm{R}}^{-1/2}(x^{(2)}-x^{(1)})}, M) \eqsp,
\end{multline*}
where for all $r \geq 0$,
\begin{multline}
\label{eq:def_talpha}
\talpha(r, M) = 
\frac{r}{2 (2\uppi)^{(d+1)/2}}\int_0^{1} \left\{s^{-3/2} \exp\parenthese{-r^2/(8s)} \right. \\
\left.    \int_{\rset^d} \1_{\ccint{0,M}}\parenthese{\parenthese{(1-s)w_1^2+\cdots+w_d^2}^{1/2}} \rme^{-\norm[2]{x}/2} \rmd w \right\}\rmd s   \eqsp.
\end{multline}
\end{lemma}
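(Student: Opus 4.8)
The plan is to recognise the construction as a reflection coupling of two Brownian motions at the first--passage time of a level, and then to evaluate the probability in the statement by conditioning on that passage time. Throughout, assume $x^{(1)}\neq x^{(2)}$ (if $x^{(1)}=x^{(2)}$ the statement is empty) and write $z=\Sigma_{\mathrm{R}}^{-1/2}(x^{(2)}-x^{(1)})$, $r=\norm{z}>0$, $e=\mrn(z)=z/r$ and $a=r/2$, so that $G_{\mathrm{P}}=(\Id-ee^{\transpose})G$ is the orthogonal projection of $G$ onto the hyperplane $e^{\perp}$, $T_{\mrcc}=\inf\defEnsLigne{t\geq0:\,W^{(1)}_t\geq a}$, $W^{(2)}_t=-W^{(1)}_t$ for $t\leq T_{\mrcc}$ and $W^{(2)}_t=W^{(1)}_t-2a$ for $t>T_{\mrcc}$ (here the shift equals $-2a=-\norm{\Sigma_{\mathrm{R}}^{-1/2}(x^{(2)}-x^{(1)})}$, which makes $W^{(2)}$ continuous at $T_{\mrcc}$ since $W^{(1)}_{T_{\mrcc}}=a$). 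The first step is to check that $G^{(1)}$ and $G^{(2)}$ are standard $d$-dimensional Gaussians. As $\rset^d=\rset e\oplus e^{\perp}$ orthogonally, $W^{(1)}_1\sim\mathcal N(0,1)$, and $G_{\mathrm{P}}$ is a standard Gaussian on $e^{\perp}$ independent of $W^{(1)}$ (because $G$ is independent of $W^{(1)}$), the vector $G^{(1)}=W^{(1)}_1 e+G_{\mathrm{P}}$ is a standard $d$-dimensional Gaussian. For $G^{(2)}$ the point is that $(W^{(2)}_t)_{t\geq0}$ is again a standard Brownian motion: $\hat W_t:=-W^{(1)}_t$ is a standard BM, $T_{\mrcc}$ is its first hitting time of the level $-a$, and $W^{(2)}$ is exactly the reflection of $\hat W$ about $-a$ after $T_{\mrcc}$ (indeed $-2a-\hat W_t=W^{(1)}_t-2a$ for $t>T_{\mrcc}$). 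By the reflection principle $W^{(2)}$ is a standard BM, hence $W^{(2)}_1\sim\mathcal N(0,1)$; since $W^{(2)}$ is a functional of $W^{(1)}$ it remains independent of $G_{\mathrm{P}}$, and $G^{(2)}=W^{(2)}_1 e+G_{\mathrm{P}}$ is a standard $d$-dimensional Gaussian.

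Next I would identify the event. By construction $G^{(1)}-G^{(2)}=(W^{(1)}_1-W^{(2)}_1)e$, and from the definition of $W^{(2)}$ one has $W^{(1)}_t-W^{(2)}_t=2W^{(1)}_t<2a=r$ for $t<T_{\mrcc}$ and $W^{(1)}_t-W^{(2)}_t=2a=r$ for $t\geq T_{\mrcc}$. Hence $x^{(1)}+\Sigma_{\mathrm{R}}^{1/2}G^{(1)}=x^{(2)}+\Sigma_{\mathrm{R}}^{1/2}G^{(2)}$ is equivalent to $G^{(1)}-G^{(2)}=z$, hence to $W^{(1)}_1-W^{(2)}_1=r$, hence to the event $\defEnsLigne{T_{\mrcc}\leq1}$. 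Moreover $G^{(1)}-z/2=(W^{(1)}_1-a)e+G_{\mathrm{P}}$, so $\norm{G^{(1)}-z/2}^{2}=(W^{(1)}_1-a)^{2}+\norm{G_{\mathrm{P}}}^{2}$, and the probability to compute equals $\proba{T_{\mrcc}\leq1,\ (W^{(1)}_1-a)^{2}+\norm{G_{\mathrm{P}}}^{2}\leq M^{2}}$.

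Finally I would carry out the computation by conditioning on $T_{\mrcc}$. Its law is the first--passage law of a standard Brownian motion to the level $a$, with density $s\mapsto a(2\pi s^{3})^{-1/2}\exp(-a^{2}/(2s))$ on $\rset_+$; for $a=r/2$ this is $\frac{r}{2(2\pi)^{1/2}}\,s^{-3/2}\exp(-r^{2}/(8s))$. Conditionally on $\defEnsLigne{T_{\mrcc}=s}$ with $s\leq1$, the strong Markov property gives $W^{(1)}_1-a\sim\mathcal N(0,1-s)$, independent of $G_{\mathrm{P}}\sim\mathcal N(0,\Id-ee^{\transpose})$; writing these in an orthonormal basis $(e,f_2,\dots,f_d)$ shows that $(W^{(1)}_1-a)^{2}+\norm{G_{\mathrm{P}}}^{2}$ has the law of $(1-s)w_1^{2}+w_2^{2}+\cdots+w_d^{2}$ for $w$ a standard $d$-dimensional Gaussian. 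Multiplying the resulting conditional probability by the first--passage density and integrating $s$ over $(0,1)$ produces exactly the right--hand side of \eqref{eq:def_talpha}. I expect the only genuinely delicate points to be the verification that $W^{(2)}$ is a Brownian motion --- getting the reflected level and the continuity at $T_{\mrcc}$ right --- and the bookkeeping that matches the constant $r/(2(2\pi)^{(d+1)/2})$ coming from the product of the first--passage density and the $d$-dimensional Gaussian density; everything else is routine.
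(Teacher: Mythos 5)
Your proof is correct and follows essentially the same route as the paper's: the strong Markov/reflection argument showing $(W^{(2)}_t)_{t\geq 0}$ is a standard Brownian motion (hence $G^{(1)},G^{(2)}$ are standard Gaussians), the identification of the merging event with $\{T_{\mrcc}\leq 1\}$, and the evaluation by conditioning on $T_{\mrcc}$ via the strong Markov property together with the explicit first-passage density of $T_{\mrcc}$. Your reading of the post-$T_{\mrcc}$ shift as $-\norm{\Sigma_{\mathrm{R}}^{-1/2}(x^{(2)}-x^{(1)})}$ (so that $W^{(2)}$ is continuous at $T_{\mrcc}$), rather than the $\Sigma_{\mathrm{R}}^{-1}$ appearing in the statement, is exactly the interpretation the paper's proof uses.
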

\begin{proof}
 By
  the Markov property of the Brownian motion $(W_t^{(1)})_{t \geq 0}$,
  since $T_c$ is a $(\mcf^{W}_t)_{t \geq 0}$-stopping time, where
  $\mcf^{W}_t = \sigma(W^{(1)}_s , s \leq t)$, $W^{(2)}_t$ is a
  Brownian motion. Therefore, $G^{(1)}$ and $G^{(2)}$ are $d$-dimensional standard Gaussian random variables.

  Using again the Markov property of
  $(W_t^{(1)})_{t \geq 0}$, given $T_{\mrcc} <1$,
  $W_1^{(1)}-W^{(1)}_{T_{\mrcc}}$ is independent of
  $\mcf_{T_{\mrcc}}^{W}$. Therefore, since $\{x^{(1)}+  \Sigma_{\mathrm{R}}^{1/2} G^{(1)} = x^{(2)}+ \Sigma_{\mathrm{R}}^{1/2} G^{(2)}\} = \{T_{\mrcc} \leq 1\}$ and $G$ is independent of $(W^{(1)}_t)_{t \geq 0}$, we get for all $M \geq 0$,
  \begin{align*}
&      \proba{x^{(1)}+ \Sigma_{\mathrm{R}}^{1/2} G^{(1)} = x^{(2)}+\Sigma_{\mathrm{R}}^{1/2} G^{(2)} \, , \, \norm{G^{(1)}-\Sigma_{\mathrm{R}}^{-1/2}(x^{(2)}-x^{(1)})/2} \leq M } \\
 & =   \expe{\1_{\ccint{0,1}}(T_{\mrcc}) \,  \mathbb{P}\parenthese{\left. \parenthese{(W^{(1)}_1 -W^{(1)}_{T_{\mrcc}})^2 + \norm[2]{\bG}}^{1/2 } \leq M \middle \vert \mcf_{T_{\mrcc}}^{W}  \right. }} \\
 & = (2\uppi)^{-d/2}\expe{\1_{\ccint{0,1}}(T_{\mrcc}) \int_{\rset^d} \1_{\ccint{0,M}}\defEns{\parenthese{(1-T_{\mrcc})w_1^2+\cdots+w_d^2}^{1/2}} \rme^{-\norm[2]{x}/2} \rmd w } \eqsp. 
  \end{align*}
The proof then follows from the explicit expression of the density of $T_{\mrcc}$ \wrt~the Lebesgue measure (see e.g. \cite[ p. 107]{revuz:yor:1994}).
\end{proof}

\begin{lemma}\label{LemCouplageCompact}
Assume \Cref{assum:hyp_base}, $\msy = \rset^d$ and $\loiy$ is the Gaussian measure with zero-mean and covariance matrix $\Sigma$. Then, for all $t>0$ and all compact set $\msk \subset \{(z,w) \in \rset^d \times \msy \, : \, \norm{z} + \norm{w} \leq R_{\msk} \}$ of $\rset^{d} \times \msy$, $R_\msk \geq 0$, for all  $(x,y),(\tx,\ty) \in \msk$ and for all $M \geq 0$, 
\begin{multline*}
(1/2)  \tvnorm{P_t((x,y), \cdot) -  P_t((\tx,\ty),\cdot)} \\
  \leq  1 -\expe{\1_{\ccint{0,\rate t}}(E_1+E_2) \talpha\parenthese{2(\rate+E_1)R_{\msk}\norm{\Sigma^{-1/2}} E_2,M} g(E_2/\rate) } \eqsp,
\end{multline*}
where $\talpha$ is given by \eqref{eq:def_talpha}, for all $r \geq 0$,
\begin{align}
\nonumber
  g(r) &= \proba{r \tM \sup_{z \in \ball{0}{(1+E_1/\rate)R_{\msk}+(r/\rate)\tilde{M}}}\norm{\nabla U(z)} \geq E_3} \eqsp,\\
\label{eq:def_tilde_M}
\tilde{M} &=M + \normLigne{\Sigma^{1/2}} (1+E_1/\rate)R_{\msk} \eqsp,
\end{align}
and $E_1,E_2,E_3$ are three independent exponential random variables with parameter $1$.
\end{lemma}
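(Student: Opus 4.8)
The plan is to invoke the standard coupling bound
$\tfrac12\tvnorm{P_t((x,y),\cdot)-P_t((\tx,\ty),\cdot)}\leq \proba{(X_t,Y_t)\neq(\tX_t,\tY_t)}$,
valid for \emph{any} coupling $\big((X_s,Y_s)_{s\geq0},(\tX_s,\tY_s)_{s\geq0}\big)$ of the two BPS processes started from $(x,y)$ and $(\tx,\ty)$ in $\msk$. Hence it is enough to exhibit one coupling whose coalescence probability at time $t$ is at least the claimed expectation; the coupling I would build drives the two processes to a common position at a shared refreshment time that is $\leq t$, and then runs them synchronously.

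First I would couple the two refreshment clocks so that they coincide: this is legitimate because refreshments fire at the constant rate $\rate$, independently of the current state. Write $R_1=E_1/\rate$ and $R_2=R_1+E_2/\rate$ for the first two common refreshment times, and restrict to the event $\{R_2\leq t\}=\{E_1+E_2\leq\rate t\}$, which is exactly the indicator in the statement. On $[0,R_1)$ I would run the two processes with independent bounce noise; since a bounce preserves $\norm{Y}$ and $(x,y),(\tx,\ty)\in\msk$, one has $\norm{Y_s}\vee\norm{\tY_s}\leq R_\msk$ for $s<R_1$, hence $\norm{X_{R_1}}\vee\norm{\tX_{R_1}}\leq (1+E_1/\rate)R_\msk$ and $\norm{X_{R_1}-\tX_{R_1}}\leq 2(1+E_1/\rate)R_\msk$.

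At $R_1$ both processes refresh. Here I would draw the two new Gaussian velocities jointly by the reflection coupling of \Cref{LemMiroir}, applied in the $\Sigma^{-1/2}$-rescaled coordinates and calibrated through the forthcoming free-flight interval so that, if neither process bounces on $(R_1,R_2)$, then $X_{R_2}=\tX_{R_2}$; the truncation parameter $M$ of \Cref{LemMiroir} is used to keep both refreshed velocities of norm at most $\tilde M=M+\normLigne{\Sigma^{1/2}}(1+E_1/\rate)R_\msk$. By \Cref{LemMiroir}, the probability of this ``would-coalesce and velocities truncated'' event is $\talpha$ evaluated at the rescaled inter-process distance, which together with the a priori bound on $\norm{X_{R_1}-\tX_{R_1}}$ and the explicit formula \eqref{eq:def_talpha} is at least $\talpha\big(2(\rate+E_1)R_\msk\normLigne{\Sigma^{-1/2}}E_2,M\big)$. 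On the truncation event, for $s\in(R_1,R_2)$ both trajectories stay in the ball appearing in the definition of $g$, so the accumulated bounce rate $\int_{R_1}^{R_2}\ps{Y_s}{\nabla U(X_s)}_+\rmd s$, and its analogue for the tilde process, are both bounded by $(E_2/\rate)\tilde M\sup\norm{\nabla U}$ over that ball; coupling the two bounce clocks to a single $\mathrm{Exp}(1)$ variable $E_3$, the event that neither process bounces on $(R_1,R_2)$ then has conditional probability $g(E_2/\rate)$. On the intersection of all these events $X_{R_2}=\tX_{R_2}$, and since $R_2$ is a common refreshment time I would assign both the same refreshed velocity and couple them synchronously from then on, so $(X_t,Y_t)=(\tX_t,\tY_t)$. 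Taking expectations over $E_1,E_2,E_3$ of the product of these (conditionally independent given the clocks) factors yields the asserted estimate.

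The hard part is the reflection coupling at the refreshment step: because $\loiy=\mathcal N(0,\Sigma)$ is not rotation invariant, one cannot simply reflect a refreshed velocity and preserve its law, which is exactly what \Cref{LemMiroir} is designed to circumvent — reflecting a one-dimensional Brownian component in $\Sigma^{-1/2}$-coordinates while leaving the orthogonal part untouched. The second difficulty is that the planned coalescence is fragile: it is destroyed either by a bounce during the free flight or by an atypically large refreshed velocity, which forces the construction to simultaneously truncate the velocities (through $M$), confine the trajectories to an explicit ball, and pay for the no-bounce event (the factor $g(E_2/\rate)$), all while keeping every constant explicit in $d$, $\Sigma$, $R_\msk$, $\rate$ and $t$; this is where the bookkeeping in the definitions of $\tilde M$ and $g$ comes from. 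The remaining points — measurability of the events and their conditional independence given $E_1,E_2,E_3$ — are routine.
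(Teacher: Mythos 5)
Your proposal is essentially the paper's own argument: couple the two refreshment clocks, apply the mirror coupling of \Cref{LemMiroir} in $\Sigma^{-1/2}$-coordinates at the first refreshment (with the truncation level $M$ giving the confinement radius $\tilde M$), pay for the absence of bounces on the free flight between the two refreshments via the sup of $\norm{\nabla U}$ on an explicit ball, merge the velocities at the second refreshment, and restrict to $\{E_1+E_2\leq \rate t\}$. The only deviations (independent rather than shared bounce noise before the first refreshment, and stating the no-bounce probability directly as $g(E_2/\rate)$ rather than via the intermediate events $\msa_1,\msa_2$) are immaterial to the bound, so the proof is correct and follows the same route.
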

\begin{proof}
  Let $\msk$ be a compact set of $\rset^{2d}$. Let
  $(x,y),(\tx,\ty) \in \msk$, $(x,y) \not = (\tx,\ty)$. We construct a
  non Markovian coupling $(X_t,Y_t,\tX_t,\tY_t)$ between the two distributions
  $P_t((x,y), \cdot)$ and $P_t((\tx,\ty), \cdot)$ for all $t > 0$, and
  lower bound the quantity $\mathbb{P}((X_t,Y_t)=(\tX_t,\tY_t))$,
  which will conclude the proof using the characterization of the total
  variation distance by coupling.
  
  Before proceeding to its precise definition, let us give a brief and informal description of this coupling (see  \Cref{coupl1}, \Cref{coupl2} and \Cref{coupl3}). We couple both processes to have the same two first refreshment times $H_1$ and $H_2$. At time $H_1$, the Gaussian velocities are chosen according to \Cref{LemMiroir} so that, in the absence of bounces in the meanwhile, with positive probability, the processes will reach the same position at time $H_2$. At time $H_2$, both velocities are refreshed with the same Gaussian variable. Hence, with positive probability, at time $H_2$, the processes have the same position and same velocity, in which case we can keep them equal for all times $t\geqslant H_2$.
  
\begin{figure}[!h]
\begin{tikzpicture}
\begin{axis}[
    xlabel={Time $t$},
    ylabel={Position},
    xmin=0, xmax=100,
    ymin=15, ymax=85,
    xtick={0},
    ytick={0,20,40,60,80,100,120},
    legend pos=north east,
    extra x ticks={30,60},
    extra x tick
    labels={$H_1$, $H_2$},
    extra x tick style={grid=major}
]
 
\addplot[
    color=blue,
    mark=square,
    ]
    coordinates {
    (0,70)(15,75)(18,78)(30,61)(60,50)(74,20)(89,34)(101,59)
    };
\addplot[
    color=red,
    mark=square,
    ]
    coordinates {
    (0,40)(20,45)(25,35)(30,30)(60,50)(74,20)(89,34)(101,59)
    };
    \legend{$(X_t)_{t \geq 0}$,$(\tX_t)_{t \geq 0}$};
  \end{axis}
\end{tikzpicture}
  \caption{Before the first refreshment at time $H_1$, both processes may bounce freely. At time $H_1$, the Gaussian velocities are coupled so that, at time $H_2$ (which is the next refreshment time), provided this Gaussian coupling of the velocities succeeds, and provided they have not bounced in the meanwhile, both processes reach the same position. At time $H_2$, both processes take the same velocity: they have merged, the coupling is a success.}\label{coupl1}
\end{figure}
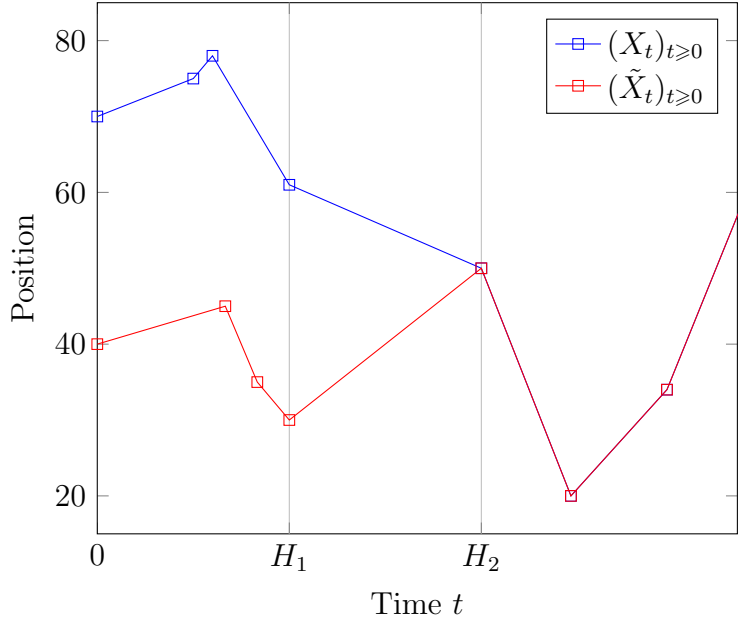

\begin{figure}[!h]
\begin{tikzpicture}
\begin{axis}[
    xlabel={Time $t$},
    ylabel={Position},
    xmin=0, xmax=100,
    ymin=15, ymax=85,
    xtick={0},
    ytick={0,20,40,60,80,100,120},
    legend pos=north east,
    extra x ticks={30,60},
    extra x tick
    labels={$H_1$, $H_2$},
    extra x tick style={grid=major}
]
 
\addplot[
    color=blue,
    mark=square,
    ]
    coordinates {
    (0,70)(15,75)(18,78)(30,61)(48,54)(60,67)(74,30)(89,24)(101,39)
    };
    
\addplot[
    color=cyan,
    dashed
    ]
    coordinates {
(48,54)(60,50)
  };
  
\addplot[
    color=red,
    mark=square,
    ]
    coordinates {
    (0,40)(20,45)(25,35)(30,30)(60,50)(70,20)(90,59)
    };
    \legend{$(X_t)_{t \geq 0}$,,$(\tX_t)_{t \geq 0}$};
  \end{axis}
\end{tikzpicture}
\caption{If one (at least) of the processes bounces between times $H_1$ and $H_2$, then the coupling fails. There may be other bounces after the first one.}\label{coupl2}
\end{figure}
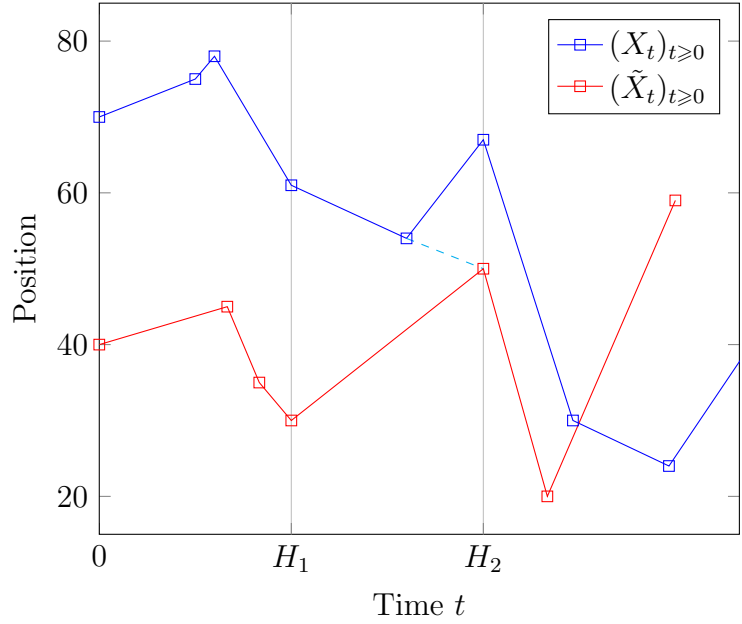

\begin{figure}[!h]
\begin{tikzpicture}
\begin{axis}[
    xlabel={Time $t$},
    ylabel={Position},
    xmin=0, xmax=100,
    ymin=15, ymax=85,
    xtick={0},
    ytick={0,20,40,60,80,100,120},
    legend pos=north east,
    extra x ticks={30,60},
    extra x tick
    labels={$H_1$, $H_2$},
    extra x tick style={grid=major}
]
 
\addplot[
    color=blue,
    mark=square,
    ]
    coordinates {
    (0,70)(15,75)(18,78)(30,61)(60,78)(74,30)(89,24)(101,39)
    };
\addplot[
    color=red,
    mark=square,
    ]
    coordinates {
    (0,40)(20,45)(25,35)(30,30)(60,40)(70,25)(90,59)
    };
    \legend{$(X_t)_{t \geq 0}$,$(\tX_t)_{t \geq 0}$};
  \end{axis}
\end{tikzpicture}
\caption{Even if none of the process bounces between time $H_1$ and $H_2$, the coupling may also fail if the Gaussian coupling of the velocities at time $H_1$ fails.}\label{coupl3}
\end{figure}
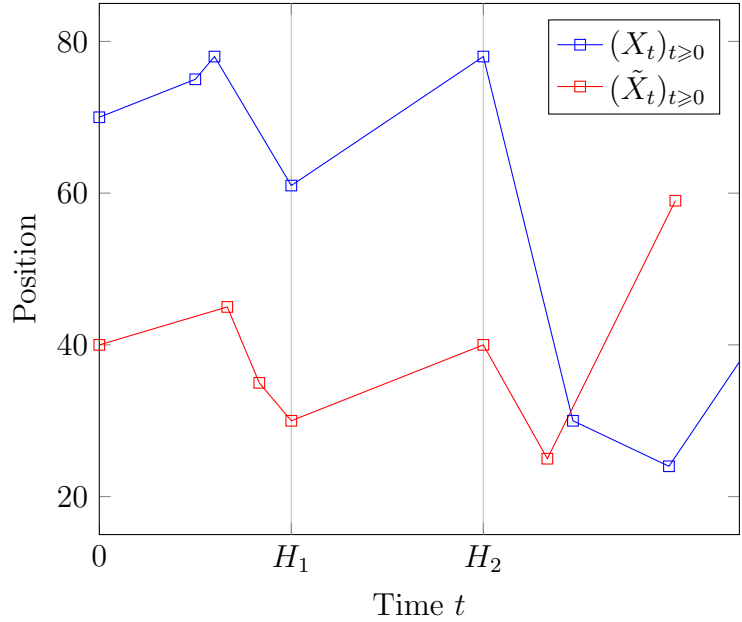
  
  More precisely, the coupling we consider is defined as follows. Let
  $(\nE_i, \nF_i,$ $\bG_{i})_{i \in \nset^*}$ be \iid~random
  variables, where for all $i \in \nset^*$, $\nE_i, \nF_i$
  are independent exponential random variables with parameter $1$ and $\bG_i$ has
  distribution $\loiy$ and is independent from $\nE_i, \nF_i$. In addition, let $G$ be a standard $d$-dimensional Gaussian random variable and $(W_t)_{t \geq 0}$ be a $d$-dimensional standard Brownian motion such that $G$, $(W_t)_{t \geq 0}$ and   $(\nE_i, \nF_i,\bG_{i})_{i \in \nset^*}$ are independent. 

  Set
  $ (X_0, Y_0) = (x,y)$, $ (\tX_0, \tY_0) = (\tx,\ty)$,
  $\S_0 =0$, $H_0=0$, $N_0=0$, $H_1=\nE_1/\rate$ and
  $N_1=1$. The process and its jump times are defined by recursion. Assume that $S_n, N_{n+1}, H_{n+1}$ and $(X_t,Y_t,\tX_t,\tY_t)_{t \in \ccint{0,S_n}}$ have been defined for some $n\in\N$. We  distinguish two cases.
  \begin{enumerate}[label=(\Alph*),wide, labelwidth=!, labelindent=0pt]
  \item If $N_{n+1} = 1$. Define
    \begin{align*}
  T_{n+1}^{(1)} &= \inf\ensemble{t \geq 0}{\int_0^t \defEns{\ps{Y_{\S_n}}{\nabla U (X_{\S_n}+sY_{\S_n})}_+} \rmd s \geq \nF_{n+1}} \eqsp,\\
      \tT_{n+1}^{(1)} &= \inf\Big\{t \geq 0 \, :  \int_0^t \defEns{\ps{\tY_{\S_n}}{\nabla U (\tX_{\S_n}+s\tY_{\bS_n})}_+} \rmd s \geq  \nF_{n+1}  \Big \}\eqsp,\\
T_{n+1} &= H_{n+1} \wedge T_{n+1}^{(1)} \wedge     \tT_{n+1}^{(1)} \eqsp.
\end{align*}
Set $S_{n+1} = S_n + T_{n+1}$, for all
$t \in \coint{S_n,S_{n+1}}$,
$(X_t,Y_t) = \phi_t(X_{S_n}, Y_{S_n})$,
$X_{S_{n+1}} = X_{S_n} + T_{n+1} Y_{S_n} $,
$(\tX_t,\tY_t) = \phi_t(\tX_{S_n}, \tY_{S_n})$,
$\tX_{S_{n+1}} = \tX_{S_n} + T_{n+1} \tY_{S_n} $. If $T_{n+1} = \bH_{n+1} $, consider the two
  random variables $G^{(1)},G^{(2)}$ defined by
  \Cref{LemMiroir}, associated with $(W_t)_{t \geq 0}$ and $G$, and for $x^{(1)} = X_{S_{n+1}}$,
  $x^{(2)} = \tX_{S_{n+1}}$, $\Sigma_{\mathrm{R}} = \nE_2 \Sigma / \rate$, and $M \geq 0$.
\begin{equation*}
  \text{ Still if $T_{n+1} = H_{n+1} $, \, set } 
  \begin{cases}
&   Y_{S_{n+1}} = \Sigma^{1/2}G^{(1)}  \eqsp, \,    \tY_{S_{n+1}} = \Sigma^{1/2} G^{(2)} \\
&    N_{n+2} = 2 \eqsp, \,
    H_{n+2}= E_{N_{n+2}}/\rate \eqsp .
  \end{cases}
\end{equation*}

Otherwise set $N_{n+2} = N_{n+1}
$, $ H_{n+2}= H_{n+1} - T_{n+1}$ and 
\begin{align*}
  &\text{ if $T_{n+1} =    T_{n+1}^{(1)} =    \tT_{n+1}^{(1)}$, \,} 
   Y_{\S_{n+1}} = \Refl(X_{\bS_n} + T_{n+1}
    Y_{S_n} , Y_{S_n}) \eqsp, \,  \\
  & \phantom{\text{ if $T_{n+1} =    T_{n+1}^{(1)} =    \tT_{n+1}^{(1)}$, \,} } \tY_{\S_{n+1}} = \Refl(\tX_{\S_n} + T_{n+1}
    \tY_{S_n} , \tY_{S_n})
    \eqsp,\\
 & \text{ if $T_{n+1} =    T_{n+1}^{(1)} <   \tT_{n+1}^{(1)}$, \,} 
   Y_{\S_{n+1}} = \Refl(X_{\S_n}  + T_{n+1}
Y_{\S_n}  , Y_{\S_n}) \eqsp, \,    \tY_{\S_{n+1}} =  \tY_{\S_{n}}
    \eqsp,\\
&  \text{ if $T_{n+1} =    \tT_{n+1}^{(1)} <   T_{n+1}^{(1)}$, \,} 
   \tY_{\S_{n+1}} = \Refl(\tX_{\S_n} + T_{n+1}
   \tY_{\S_{n}},    \tY_{\S_{n}}) \eqsp, \,    Y_{\S_{n+1}} =    Y_{\S_{n}} \eqsp,
\end{align*}
where $\Refl$ is defined by \eqref{eq:definition_Refl}.

\item If $N_{n+1} \geq 2$.
  Define
      \begin{align*}
  T_{n+1}^{(1)} &= \inf\ensemble{t \geq 0}{\int_0^t \defEns{\ps{Y_{\S_n}}{\nabla U (X_{\S_n}+sY_{\S_n})}_+} \rmd s \geq \nF_{n+1}} \eqsp,\\
      \tT_{n+1}^{(1)} &= \inf\Big\{t \geq 0 \, :  \int_0^t \defEns{\ps{\tY_{\S_n}}{\nabla U (\tX_{\S_n}+s\tY_{\bS_n})}_+} \rmd s \geq  \nF_{n+1}  \Big \}\eqsp,\\
T_{n+1} &= H_{n+1} \wedge T_{n+1}^{(1)} \wedge     \tT_{n+1}^{(1)} \eqsp.
\end{align*}
Set $S_{n+1} = S_n + T_{n+1}$, for all
$t \in \coint{S_n,S_{n+1}}$,
$(X_t,Y_t) = \phi_t(X_{S_n}, Y_{S_n})$,
$X_{S_{n+1}} = X_{S_n} + T_{n+1} Y_{S_n} $,
$(\tX_t,\tY_t) = \phi_t(\tX_{S_n}, \tY_{S_n})$,
$\tX_{S_{n+1}} = \tX_{S_n} + T_{n+1} \tY_{S_n} $
 and
\begin{equation*}
  \text{ if $\tT_{n+1} = H_{n+1} $, \,} 
  \begin{cases}
&   Y_{S_{n+1}} = \bG_{n+1}  \eqsp, \,    \tY_{S_{n+1}} = \bG_{n+1} \\
&    N_{n+2} = N_{n+1}+1 \eqsp, \,
    H_{n+2}= E_{N_{n+2}}/\rate \eqsp,
  \end{cases}
\end{equation*}

Otherwise set $N_{n+2} = N_{n+1}
$, $ H_{n+2}= H_{n+1} - T_{n+1}$ and
\begin{align*}
  &\text{ if $T_{n+1} =    T_{n+1}^{(1)} =    \tT_{n+1}^{(1)}$, \,} 
   Y_{\S_{n+1}} = \Refl(X_{\bS_n} + T_{n+1}
    Y_{S_n} , Y_{S_n}) \eqsp, \,  \\
  & \phantom{\text{ if $T_{n+1} =    T_{n+1}^{(1)} =    \tT_{n+1}^{(1)}$, \,} } \tY_{\S_{n+1}} = \Refl(\tX_{\S_n} + T_{n+1}
    \tY_{S_n} , \tY_{S_n})
    \eqsp,\\
 & \text{ if $T_{n+1} =    T_{n+1}^{(1)} <   \tT_{n+1}^{(1)}$, \,} 
   Y_{\S_{n+1}} = \Refl(X_{\S_n}  + T_{n+1}
Y_{\S_n}  , Y_{\S_n}) \eqsp, \,    \tY_{\S_{n+1}} =  \tY_{\S_{n}}
    \eqsp,\\
&  \text{ if $T_{n+1} =    \tT_{n+1}^{(1)} <   T_{n+1}^{(1)}$, \,} 
   \tY_{\S_{n+1}} = \Refl(\tX_{\S_n} + T_{n+1}
   \tY_{\S_{n}},    \tY_{\S_{n}}) \eqsp, \,    Y_{\S_{n+1}} =    Y_{\S_{n}} \eqsp,
\end{align*}
\end{enumerate}

For $t \geq \sup_{n \in \nsets} S_n$, set
$(X_t,Y_t) = (\tX_t,\tY_t)= \infty$.   
Remark that, since the conditional distribution of
$(G^{(1)},G^{(2)})$ given
$(E_i,F_i,$ $ \bG_{i})_{i \in
  \nset^*}$ depends on $E_2$, $(X_t,Y_t,\tX_t,\tY_t)_{t \geq 0}$ is not Markovian.   
However, according to \Cref{LemMiroir}, conditionally to
$(E_i, (F_{i,j})_{j\in\nset^*},G_{i})_{i \in \nset^*}$, $G^{(1)}$
and $G^{(2)}$ are both $d$-dimensional standard Gaussian random variables. 
As a consequence, from \cite[\Cref{prop:superposition}]{DurmusGuillinMonmarche:toolbox}, 
 marginally, $(X_t,Y_t)_{t \geq 0}$ and
$(\tX_t,\tY_t)_{t \geq 0}$ are two BPS processes starting from $(x,y)$
and $(\tx,\ty)$.  

Further, from the construction of the two processes, for all
$n \in \nset$ if
$(X_{S_n},Y_{S_n})=(\tX_{S_n},\tY_{S_n})$,
then $(X_t,Y_t)=(\tX_t,\tY_t)$ for all $t >S_n$. Besides,
consider $\tau = \inf\{ n \in \nset \, : \, N_{n+2} = 2\}$. Then by definition, if $T_{\tau+2} = H_{\tau+2}$ and $X_{S_{\tau+1}} + E_2 G^{(1)}/\rate = \tX_{S_{\tau+1}} + E_2 G^{(2)}/\rate$, then $(X_{S_{\tau+2}},Y_{S_{\tau+2}}) = (\tX_{S_{\tau+2}},\tY_{S_{\tau+2}})$. Finally, by definition of $\tau$, $T_{\tau+1} = H_{\tau+1}$ implies $S_{\tau+1} = E_1/\rate$ and if in addition  $T_{\tau+2} = H_{\tau+2}$, we have that $S_{\tau+2} = S$ with $S = (E_1+E_2)/\rate$. Based on these three observations, we get
for all $t >0$,
\begin{align}
\nonumber
&  \proba{(X_t,Y_t)=(\tX_t,\tY_t)}  \\
\nonumber
&\geq \proba{t \geq S_{\tau+2}, T_{\tau+2} = H_{\tau+2}, X_{S_{\tau+1}} + \frac{E_2 \Sigma^{1/2} G^{(1)}}{\rate} = \tX_{S_{\tau+1}} + \frac{E_2 \Sigma^{1/2} G^{(2)}}{\rate}} \\
\label{eq:minration_proba_1}
&  \geq  \proba{\msa \cap \{t \geq  S\}\cap\{ X_{E_1/\rate} + \frac{E_2 \Sigma^{1/2}G^{(1)}}{\rate} = \tX_{E_1/\rate} + \frac{E_2 \Sigma^{1/2} G^{(2)}}{\rate}\}} \eqsp.
\end{align}
where $\msa= \msa_1 \cap \msa_2$,
\begin{align*}
\msa_1 &= \defEns{\int_0^{E_2/\rate} \defEns{\ps{Y_{E_1/\rate}}{\nabla U (X_{E_1/\rate}+sY_{E_1/\rate})}_+} \rmd s \geq  F_{\tau+2}} \eqsp,\\
\msa_2&= \defEns{\int_0^{E_2/\rate} \defEns{\ps{\tY_{E_1/\rate}}{\nabla U (\tX_{E_1/\rate}+s\tY_{E_1/\rate})}_+} \rmd s \geq  F_{\tau+2} } \eqsp.
\end{align*}

Since for all $n \in \{1,\ldots,\tau\}$, $T_{n+1} = T^{(1)}_{n+1} \wedge \tT{(1)}_{n+1}$, $\norm{Y_{S_n}} = \norm{y}$, $\normLigne{\tY_{S_n}} = \norm{\ty}$, so for all $s \in  \ccint{0,E_1/\rate}$, 
\begin{equation}
  \label{eq:majoration_X_s_coupling}
\begin{aligned}
  \normLigne{X_s} &\leq \norm{x} + (E_1/\rate) \norm{y} \leq (1+E_1/\rate)R_{\msk} \eqsp, \\
  \normLigne{\tX_s} & \leq (1+E_1/\rate)R_{\msk}   \eqsp.
\end{aligned}
\end{equation}
For $i=1,2$, by the definition \eqref{eq:def_tilde_M} of $\tilde{M}$, we obtain that 
\begin{equation*}
\msb = \bigcap_{i=1}^2 \{\normLigne{G^{(i)}-(\Sigma^{1/2}/2)(X_{E_1/\rate}-\tX_{E_1/\rate})} \leq M \} \subset \bigcap_{i=1}^2 \{\normLigne{G^{(i)}} \leq \tilde{M}\}  \eqsp.
\end{equation*}
Using that by definition, $S_{\tau+1} = E_1/\rate$, $N_{\tau+1} =1$, so $Y_{S_{\tau+1}} =   \Sigma^{1/2}G^{(1)}$ and  $Y_{S_{\tau+1}} =   \Sigma^{1/2}G^{(1)}$, we get that $\msa_1\cap\msa_2\cap\msb \subset \tmsa$ where
\begin{equation*}
\tmsa =  \defEns{(E_2/\rate)\tM \sup_{z \in \ball{0}{(1+E_1/\rate)R_{\msk}+(E_2/\rate)M}} \norm{\nabla U(z)}  \geq  F_{\tau+2}} \eqsp.
\end{equation*}

Then, we get by \eqref{eq:minration_proba_1}
\begin{align}
\nonumber
&  \proba{(X_t,Y_t)=(\tX_t,\tY_t)}  \\
\nonumber
&  \geq  \proba{\tilde{\msa}  \cap\{t \geq  S\}\cap\Big\{ X_{E_1/\rate} + \frac{E_2 \Sigma^{1/2}G^{(1)}}{\rate} = \tX_{E_1/\rate} + \frac{E_2 \Sigma^{1/2} G^{(2)}}{\rate}\Big\}} \eqsp.
\end{align}
Denoting by
$(\bmcf_n)_{n \geq 1}$ the filtration associated with
$(E_i,F_i,\bG_{i})_{i \in
  \nset^*}$, conditioning on $\bmcf_{\tau+1}$ and $E_2$ and  using that
$F_{\tau+2}$ is independent from $G^{(1)}$, $G^{(2)}$ $E_2$ and
$\bmcf_{\tau+1}$, the definition of $G^{(1)},G^{(2)}$ conditionally to $E_2$ and $\bmcf_{\tau+1}$, \Cref{LemMiroir}  and since $S=(E_1+E_2)/\rate$ by definition, we have 
\begin{align*}
&  \proba{(X_t,Y_t)=(\tX_t,\tY_t)}  \\
& \geq  \expe{\1_{\ccint{0,t}}\defEns{\frac{E_1+E_2}{\rate}} \talpha\parenthese{\frac{\norm{\Sigma^{-1/2}(X_{E_1/\rate}-\tX_{E_1/\rate})}\rate}{E_2},M} g(\frac{E_2}{\rate}) }
\end{align*}
Combining this result with  \eqref{eq:majoration_X_s_coupling} concludes the proof.
\end{proof}

Consider the more general case where $\loiy$ is rotation invariant and not singular with respect to the Lebesgue measure on $\rset^d$.
The previous proof may be adapted to this case but the result is less explicit. 

\begin{lemma}
  \label{lem:non_gaussian_coupling}
  Assume for all $\msa \in \mcb{\rset^d}$,
  \begin{equation}
    \label{eq:hyp_lem:non_gaussian_coupling}
   \loiy(\msa) \geqslant c \nu_{r,\delta}(\msa) \eqsp, 
  \end{equation}
 for some $r,\delta,c>0$, where $\nu_{r,\delta}$ the uniform law on $\{y\in \rset^d,\ r < \norm{y} < r+ \delta\}$. Let $\msk \subset \rset^d$, be a compact set.  Then there exists two random variables $G^{(1)},G^{(2)}$ with distribution $\loiy$, $t_0 \geq 0$, $\varepsilon >0$ such that for $s \geq t_0$, there exists $M \geq 0$ satisfying for all  $x,\tx \in \msk$,
  \begin{equation*}
    \proba{x + s G^{(1)} = \tx + s G^{(2)}, \norm{G^{(1)}-(x-\tx)/2} \leqslant M} \geq \varepsilon \eqsp.
  \end{equation*}
\end{lemma}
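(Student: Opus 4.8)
The plan is to use the domination \eqref{eq:hyp_lem:non_gaussian_coupling} to reduce the problem to coupling two random variables uniformly distributed on the annulus $\mathcal{A}:=\{y\in\rset^d:r<\norm{y}<r+\delta\}$. Unlike the Gaussian situation of \Cref{LemMiroir}, where a genuine Brownian mirror coupling is required to make the two velocities coalesce with positive probability, here a plain translation (i.e.\ maximal) coupling will suffice: when the shift vector is small, a translate of $\mathcal{A}$ overlaps with $\mathcal{A}$ in a fixed positive proportion, which is exactly the lower bound we need.

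Concretely, I would first write $\loiy=c\,\nu_{r,\delta}+(1-c)\mu'$, where $c\leqslant 1$ automatically (both measures being probabilities) and $\mu':=(1-c)^{-1}(\loiy-c\,\nu_{r,\delta})\in\mcp(\rset^d)$ when $c<1$ (if $c=1$ then $\loiy=\nu_{r,\delta}$ and $\mu'$ plays no role). Set $t_0:=4\diam(\msk)/\delta$ and $M:=r+\delta+\diam(\msk)/2$. Fix $s\geqslant t_0$ and $x,\tx\in\msk$, and put $w:=(\tx-x)/s$, so that $\norm{w}\leqslant\diam(\msk)/s\leqslant\delta/4$. Writing $p$ for the density of $\nu_{r,\delta}$, I would build the standard maximal coupling $(U^{(1)},U^{(2)})$ of $\nu_{r,\delta}$ with its $w$-translate: draw $U^{(1)}$ with density $p$, and conditionally on $U^{(1)}$ set $U^{(2)}=U^{(1)}-w$ with probability $\min\{p(U^{(1)}),p(U^{(1)}-w)\}/p(U^{(1)})$, otherwise draw $U^{(2)}$ independently with density $(p(\cdot)-p(\cdot+w))_+/(1-\beta)$, where $\beta:=\int_{\rset^d}\min\{p(z),p(z-w)\}\,\rmd z=\Leb(\mathcal{A}\cap(\mathcal{A}+w))/\Leb(\mathcal{A})$. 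Since $\mathcal{A}$ is rotation invariant and $\norm{w}\leqslant\delta/4$, the sub-annulus $\{z:r+\delta/4<\norm{z}<r+3\delta/4\}$ is contained in $\mathcal{A}\cap(\mathcal{A}+w)$, so
\begin{equation*}
\beta\;\geqslant\;\frac{(r+3\delta/4)^d-(r+\delta/4)^d}{(r+\delta)^d-r^d}\;=:\;\varepsilon/c\;>\;0\eqsp,
\end{equation*}
uniformly in $x,\tx\in\msk$ and in the direction of $w$.

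Then I would assemble the two $\loiy$-distributed variables by a common Bernoulli switch: let $B\sim\mathrm{Bernoulli}(c)$ and $V\sim\mu'$ be independent of $(U^{(1)},U^{(2)})$ and of each other, and set $G^{(i)}=U^{(i)}$ on $\{B=1\}$ and $G^{(i)}=V$ on $\{B=0\}$ for $i=1,2$; by the decomposition of $\loiy$, each $G^{(i)}$ has law $\loiy$. On the event $\{B=1\}\cap\{U^{(2)}=U^{(1)}-w\}$ one has $G^{(2)}=G^{(1)}-(\tx-x)/s$, i.e.\ $x+sG^{(1)}=\tx+sG^{(2)}$, and since $G^{(1)}=U^{(1)}\in\mathcal{A}$ we get $\norm{G^{(1)}-(x-\tx)/2}\leqslant\norm{G^{(1)}}+\norm{x-\tx}/2<r+\delta+\diam(\msk)/2=M$. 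Using independence of $B$ and $(U^{(1)},U^{(2)})$ together with $\PP(U^{(2)}=U^{(1)}-w)\geqslant\beta$, the probability in the statement is at least $c\beta\geqslant\varepsilon$, with $t_0$, $\varepsilon$ and $M$ depending only on $\msk,r,\delta,c,d$ (and $M$ even independent of $s$).

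The argument is short, and the closest thing to an obstacle is the routine bookkeeping in the maximal coupling step: one must check that the residual density $(p(\cdot)-p(\cdot+w))_+$ integrates to $1-\beta$ and that $U^{(2)}$ thereby acquires the marginal density $p$, which reduces to the pointwise identity $\min\{p(u),p(u+w)\}+(p(u)-p(u+w))_+=p(u)$. It is also worth noting that, contrary to the surrounding discussion, rotation invariance of $\loiy$ is not actually used here, only the domination \eqref{eq:hyp_lem:non_gaussian_coupling}.
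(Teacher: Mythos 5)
Your argument is correct, and it rests on the same underlying idea as the paper's proof: because of the minorization \eqref{eq:hyp_lem:non_gaussian_coupling}, the laws of $x+sG$ and $\tx+sG$ both charge overlapping annuli once $s$ is large compared with $\mathrm{diam}(\msk)$, and a maximal-type coupling converts this overlap into a positive coalescence probability. The execution differs in degree of explicitness rather than in substance: the paper bounds $\bar\nu_{x,s}\wedge\bar\nu_{\tx,s}$ below by a multiple of Lebesgue measure on the (nonempty) intersection of the two scaled annuli and then invokes the abstract existence of a maximal coupling together with a continuity-compactness argument, which yields a non-explicit $\varepsilon$; you instead split $\loiy = c\,\nu_{r,\delta}+(1-c)\mu'$, build the translation (accept--reject) coupling of the annulus component by hand, and bound the overlap by the volume of an explicit sub-annulus, which gives $t_0$, $\varepsilon$ and $M$ in closed form, uniformly in $s\geq t_0$ (with $M$ even independent of $s$). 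Your bookkeeping for the maximal coupling (residual density integrating to $1-\beta$, correct marginal for $U^{(2)}$) checks out, and your remark that only the domination, not rotation invariance of $\loiy$, is used is accurate and consistent with the paper's statement.
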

\begin{proof}
  Let $x,\tilde x \in\msk \subset \ball{0}{R_\msk}$, $R_\msk \geq 0$. If $s>\norm{x-\tilde x}/(2(r+\delta))$ and $M \geq R_\msk + s(r+\delta)$,  then
$\msi(x,\tilde x, s) = \{w \in \rset^d, \: \, \norm{w} \leq M \} \cap \{w\in\rset^d\, : \,  sr < \norm{w-x} < s(r + \delta) \} \cap \{w\in\rset^d\, : \,  sr < \norm{w-\tilde x} < s( r + \delta) \} \neq \emptyset$.
Writing $\bar \nu_{x,s}$ the law of $x + s G$ where $G$ has law $\loiy$, then for all $\msa \in \mcb{\rset^d}$, by \eqref{eq:hyp_lem:non_gaussian_coupling}, there exists $\tilde{c} >0$ such that
\begin{equation}
  \label{eq:lem:non_gaussian_coupling_1}
  \bar \nu_{x,s}(\msa) \wedge \bar \nu_{\tilde x,s}(\msa) \ \geqslant \tilde{c } \, \Leb\parenthese{\msa \cap  \msi(x,\tilde x,s)} \eqsp.
\end{equation}
Besides, (see \eg~\cite{pitman:1976} or \cite{rosenthal:1995}), we can construct a pair $(G_1,G_2)$ of random variables with both $G_1$ and $G_2$ distributed according to $\loiy$, and such that $\mathbb P \po x + s G = \tilde x + s G\pf = \bar \nu_{x,s}(\msa) \wedge \bar \nu_{\tilde x,s}(\msa)$. Combining this result with \eqref{eq:lem:non_gaussian_coupling_1},  the fact the function in the right hand side of \eqref{eq:lem:non_gaussian_coupling_1} is positive and depends continuously of $x$ and $\tilde x$, hence is lower bounded on $\msk$, concludes.
\end{proof}

\begin{lemma} 
Assume \Cref{assum:hyp_base} and \eqref{eq:hyp_lem:non_gaussian_coupling} for some $r,\delta,c>0$, where $\nu_{r,\delta}$ the uniform law on $\{y\in \rset^d,\ r < \norm{y} < r+ \delta\}$. Then, for all compact set $\msk$ of $\R^{d}\times \msy$, there exists $t_0,\alpha>0$ such that for all $(x,y), (\tx,\ty) \in\msk$ and all $t\geqslant t_0$, 
\begin{equation*}
 \tvnorm{ P_t((x,y),\cdot) -  P_t((\tx,\ty),\cdot)} \leqslant  2(1 - \alpha) \eqsp.
\end{equation*}
\end{lemma}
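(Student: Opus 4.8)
The plan is to adapt the (non-Markovian) coupling constructed in the proof of \Cref{LemCouplageCompact}, replacing the Gaussian mirror coupling of \Cref{LemMiroir} by the coupling of two $\loiy$-distributed velocities furnished by \Cref{lem:non_gaussian_coupling}; the latter applies precisely under hypothesis \eqref{eq:hyp_lem:non_gaussian_coupling}. Fix a compact set $\msk\subset\{(z,w):\norm{z}+\norm{w}\leqslant R_\msk\}$ and $(x,y),(\tx,\ty)\in\msk$. First I would build $(X_t,Y_t,\tX_t,\tY_t)_{t\geqslant 0}$ exactly as in the proof of \Cref{LemCouplageCompact} (same notation $\tau$, same $H_1=E_1/\rate$ and $H_2=(E_1+E_2)/\rate$, same synchronised bounce clocks $\nF_i$ before $H_1$), the only change being that at the first refreshment time $H_1$ the new velocities $(Y_{H_1},\tY_{H_1})=(G^{(1)},G^{(2)})$ are drawn according to the coupling of \Cref{lem:non_gaussian_coupling}, applied with look-ahead time $s=E_2/\rate$ and with the compact ball $\ball{0}{(1+2/\rate)R_\msk}$; at $H_2$ both velocities are set equal to the same $\loiy$-variable $\bG_{\tau+2}$. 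Exactly as in \Cref{LemCouplageCompact}, conditionally on the exponential and $\loiy$ variables, $G^{(1)}$ and $G^{(2)}$ are each $\loiy$-distributed, so by \cite[\Cref{prop:superposition}]{DurmusGuillinMonmarche:toolbox} the marginals $(X_t,Y_t)$ and $(\tX_t,\tY_t)$ are BPS processes started from $(x,y)$ and $(\tx,\ty)$; moreover, once they coincide at a refreshment time they coincide for all later times.

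Next I would isolate a good event of fixed positive probability. Set $\mathcal E_1=\{1\leqslant E_1\leqslant 2\}\cap\{b_1\leqslant E_2\leqslant b_1+1\}$ with $b_1=(\rate+2)R_\msk/(r+\delta)+1$, and $t_0=(b_1+3)/\rate$. On $\mathcal E_1$: the velocity norms stay $\leqslant\norm{y}\leqslant R_\msk$ up to $H_1$ (unchanged by bounces), so both positions remain in the fixed ball $\ball{0}{(1+2/\rate)R_\msk}$ up to $H_1$, just as in \eqref{eq:majoration_X_s_coupling}; the look-ahead time $s=E_2/\rate$ exceeds the threshold of \Cref{lem:non_gaussian_coupling} for that ball; and $S_{\tau+2}=(E_1+E_2)/\rate\leqslant t_0$. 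Hence \Cref{lem:non_gaussian_coupling} provides $\varepsilon>0$ and $M\geqslant 0$, both uniform over $\msk$, such that, conditionally on $\mathcal E_1$ and the past, the event
\[
\mathcal E_2=\Bigl\{X_{H_1}+\tfrac{E_2}{\rate}G^{(1)}=\tX_{H_1}+\tfrac{E_2}{\rate}G^{(2)}\,,\ \norm{G^{(i)}-(X_{H_1}-\tX_{H_1})/2}\leqslant M\ \text{for }i=1,2\Bigr\}
\]
has probability at least $\varepsilon$. On $\mathcal E_1\cap\mathcal E_2$ one gets $\norm{G^{(i)}}\leqslant\tilde M$ for a fixed constant $\tilde M$ (compare \eqref{eq:def_tilde_M}), so both processes stay in a fixed ball $\ball{0}{R''}$ on $(H_1,H_2)$; therefore the event $\mathcal E_3$ that neither process bounces on $(H_1,H_2)$ contains $\{F_{\tau+2}>(b_1+1)\tilde M\sup_{z\in\ball{0}{R''}}\norm{\nabla U(z)}/\rate\}$. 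Since $\nabla U$ is continuous this supremum is finite, and $F_{\tau+2}$ being an independent unit exponential, $\mathbb P(\mathcal E_3\mid\text{past})\geqslant\varepsilon'$ for some $\varepsilon'>0$ depending only on $\msk$.

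On $\mathcal E_1\cap\mathcal E_2\cap\mathcal E_3$ the two processes have the same position at $H_2=S_{\tau+2}$ and then receive the same refreshed velocity, so $(X_{S_{\tau+2}},Y_{S_{\tau+2}})=(\tX_{S_{\tau+2}},\tY_{S_{\tau+2}})$, hence $(X_t,Y_t)=(\tX_t,\tY_t)$ for all $t\geqslant S_{\tau+2}$; since $t_0\geqslant S_{\tau+2}$ on $\mathcal E_1$, this holds for every $t\geqslant t_0$. Conditioning successively yields $\mathbb P\bigl((X_t,Y_t)=(\tX_t,\tY_t)\bigr)\geqslant\mathbb P(\mathcal E_1)\,\varepsilon\,\varepsilon'=:\alpha>0$, with $\alpha$ independent of $(x,y),(\tx,\ty)\in\msk$ and of $t\geqslant t_0$. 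The conclusion then follows from the coupling inequality $\tvnorm{P_t((x,y),\cdot)-P_t((\tx,\ty),\cdot)}\leqslant 2\,\mathbb P\bigl((X_t,Y_t)\neq(\tX_t,\tY_t)\bigr)$.

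I do not expect a genuine obstacle: the only non-trivial ingredient, the coupling of the refreshed velocities, is exactly \Cref{lem:non_gaussian_coupling}. The real work is the bookkeeping needed to transplant the already intricate construction of \Cref{LemCouplageCompact} to the non-Gaussian setting — conditioning on the exponential and refreshment randomness in the correct order, and verifying that the ball radii, the bound $M$, and the probabilities $\varepsilon,\varepsilon'$ can all be chosen uniformly over the compact $\msk$ and over $t\geqslant t_0$.
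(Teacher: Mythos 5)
Your proposal is correct and follows essentially the same route as the paper: the paper's proof of this lemma simply says to repeat the coupling construction of \Cref{LemCouplageCompact}, replacing the Gaussian mirror coupling of \Cref{LemMiroir} by the velocity coupling of \Cref{lem:non_gaussian_coupling}, which is exactly what you do (with the bookkeeping on the good events spelled out explicitly).
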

\begin{proof}
  The proof is exactly similar to the proof of \Cref{LemCouplageCompact}. 
   Indeed it suffices to consider a coupling of two BPS $(X_t,Y_t)_{ t \geq 0} $ and $(\tX_t,\tY_t)_{t \geq 0}$ defined similarly to the processes defined in the proof of  \Cref{LemCouplageCompact} but $G^{(1)},G^{(2)}$ are chosen according to \Cref{lem:non_gaussian_coupling} in place of \Cref{LemMiroir}.
\end{proof}

Finally, let us detail \Cref{lem:condition-doeblin}, in prevision of the low-temperature study of \Cref{sec:metast-regime-anne}.
\begin{lemma} 
\label{lem:condition-doeblin-precis}
Assume \Cref{assum:hyp_base}. Then, for all compact set $\msk \subset \R^{d}\times \msy$, there exist $t_0,\varepsilon,C,R>0$, which depend on $\msk$, $\loiy$ and $\rate$ but not on $U$, such that for all  $(x,y),(\tx,\ty)\in\msk$ and all $t\geqslant t_0$,
\begin{equation*}
  \tvnorm{ P_t((x,y),\cdot) -  P_t((\tx,\ty),\cdot)}  \leq  2 \parentheseDeux{1 - \varepsilon \exp \parenthese{ - C \norm{\nabla U}_{\infty, \ball{0}{R}} }}\eqsp.
\end{equation*}
\end{lemma}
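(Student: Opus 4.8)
The plan is to revisit the coupling constructed in the proof of \Cref{LemCouplageCompact} (and, when $\loiy$ is rotation invariant but singular with respect to the Lebesgue measure, its adaptation mentioned above, cf.\ \cite[Lemma 5.2]{MonmarcheRTP} or \cite[Lemma 2]{Doucet2017}), and to trace how the resulting lower bound on the coupling probability depends on the potential $U$. The key observation will be that $U$ enters only through the probability of avoiding a bounce during two short time intervals, while every other factor depends solely on $\msk$, $\loiy$ and $\rate$; this is precisely what \Cref{lem:condition-doeblin} does not keep track of.

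Fix a compact $\msk\subset\{(z,w):\norm{z}+\norm{w}\leqslant R_\msk\}$. By the characterisation of the total variation distance by couplings, it suffices to build, for each pair $(x,y),(\tx,\ty)\in\msk$, a coupling $(X_t,Y_t,\tX_t,\tY_t)_{t\geqslant0}$ of $P_t((x,y),\cdot)$ and $P_t((\tx,\ty),\cdot)$ with $\proba{(X_t,Y_t)=(\tX_t,\tY_t)}\geqslant\varepsilon\,\rme^{-C\norm{\na U}_{\infty,\ball{0}{R}}}$ for all $t\geqslant t_0$. I would use exactly the construction of \Cref{LemCouplageCompact}: both processes share the first two refreshment times $H_1=E_1/\rate$ and $H_2=H_1+E_2/\rate$ (with $E_1,E_2$ independent standard exponentials), run on $[0,H_1]$ as in that construction, refresh their velocities at $H_1$ using a coupled pair $(V^{(1)},V^{(2)})$ of $\loiy$-distributed variables chosen, given $E_1,E_2,X_{H_1},\tX_{H_1}$, so as to maximise $\proba{X_{H_1}+(E_2/\rate)V^{(1)}=\tX_{H_1}+(E_2/\rate)V^{(2)}}$, run with fresh bounce clocks on $[H_1,H_2]$, and refresh both velocities at $H_2$ with a common $\bG\sim\loiy$; on the event that neither process bounces on $[0,H_1]$, the post-refreshment positions at $H_2$ coincide, neither process bounces on $[H_1,H_2]$, and $H_2\leqslant t$, the two processes merge at $H_2$. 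I would then restrict to $\msg_0=\{E_1\leqslant1,\ E_2\leqslant1,\ \norm{V^{(1)}}\vee\norm{V^{(2)}}\leqslant M\}$ for a constant $M$ chosen, depending only on $\loiy,\msk,\rate$, large enough that the velocity coupling still succeeds with positive probability on $\{\norm{V^{(i)}}\leqslant M\}$. On $\msg_0$ one has $H_2\leqslant2/\rate$, so the fourth requirement holds for $t\geqslant t_0:=2/\rate$, and all positions $X_s,\tX_s$ with $s\leqslant H_2$ stay in $\ball{0}{R}$ with $R:=(1+1/\rate)R_\msk+M/\rate$.

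The bookkeeping then runs as follows. Conditioning successively on $(E_1,E_2)$, then on the absence of a bounce on $[0,H_1]$, then on $(V^{(1)},V^{(2)})$, the bounce clocks being standard exponentials, each no-bounce probability equals $\exp(-\int_0^{\tau}\ps{v}{\na U(\cdot)}_+\,\rmd s)$ for the relevant velocity $v$ and duration $\tau$, hence is $\geqslant\exp(-\tau\norm{v}\,\norm{\na U}_{\infty,\ball{0}{R}})$; on $\msg_0$ the relevant durations are $\leqslant1/\rate$ and the speeds $\leqslant R_\msk$ before $H_1$ and $\leqslant M$ after $H_1$, so the product of these no-bounce factors is $\geqslant\rme^{-C\norm{\na U}_{\infty,\ball{0}{R}}}$ with $C:=(2/\rate)(R_\msk+M)$. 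The two remaining factors — $\proba{E_1\leqslant1,E_2\leqslant1}$, and a uniform positive lower bound on $\{X_{H_1}+(E_2/\rate)V^{(1)}=\tX_{H_1}+(E_2/\rate)V^{(2)},\ \norm{V^{(i)}}\leqslant M\}$ obtained, as in the last lines of the proof of \Cref{lem:non_gaussian_coupling}, from continuity in the endpoints and compactness of $\msk$ (on the no-bounce event $X_{H_1},\tX_{H_1}$ are explicit functions of $E_1$ and the endpoints) — do not involve $U$. Collecting all factors gives $\proba{(X_t,Y_t)=(\tX_t,\tY_t)}\geqslant\varepsilon\,\rme^{-C\norm{\na U}_{\infty,\ball{0}{R}}}$ for $t\geqslant t_0$, with $\varepsilon,C,R,t_0$ depending only on $\msk$, $\loiy$ and $\rate$, which yields the claim.

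The main obstacle is the velocity coupling at $H_1$: when $\loiy$ is carried by a sphere, matching the two post-refreshment positions cannot be achieved with a single refreshment and a conditioned $E_2$ (the intersection of two spheres is Lebesgue-null), so one must instead allow a bounded random number of further refreshments, as in \cite[Lemma 5.2]{MonmarcheRTP} or \cite[Lemma 2]{Doucet2017}, so that the relevant position law becomes absolutely continuous. This changes only the value of $\varepsilon$ and the numerical constants entering $C,R,t_0$ — none of which depends on $U$ — so the statement is unchanged; one should in particular check that the number of extra refreshments and their total duration can be bounded in terms of $\msk,\loiy,\rate$ alone, which is where rotation invariance of $\loiy$ (hence its being a mixture of uniform laws on spheres) is used.
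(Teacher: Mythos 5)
Your proposal is correct and takes essentially the same approach as the paper: the paper's (sketched) proof reads the $U$-dependence off the coupling of \Cref{LemCouplageCompact} in the Gaussian case and, in general, adapts \cite[Theorem 5.1]{MonmarcheRTP} by first coupling two bounce-free ($U=0$) processes inside a fixed compact within a time $t_0$ with probability depending only on $\msk$, $\loiy$, $\rate$, and then declaring the coupling a success only if no bounce occurs before $t_0$, which produces exactly your factor $\exp(-C\norm{\nabla U}_{\infty,\ball{0}{R}})$. Your minor deviations (also forbidding bounces on $[0,H_1]$, and capping $E_1,E_2$ by $1$ rather than by a constant depending on $\msk,\loiy,\rate$, which is what one should do when $\loiy$ has bounded support) are harmless, since all constants are allowed to depend on $\msk$, $\loiy$ and $\rate$.
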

\begin{proof}
In the case where $\loiy$ is a Gaussian distribution, the proof follows from the statement of \Cref{LemCouplageCompact}. In the general case, we only give a sketch of proof, since this is a direct adaptation of \cite[Theorem 5.1]{MonmarcheRTP}. First, in the spirit of the proof of  \Cref{LemCouplageCompact} or of \cite[Lemma 5.2]{MonmarcheRTP}, we study a BPS with no potential, i.e. with $U=0$, and we show that  we may couple them so that, with some probability $\alpha>0$, they merge in a given time $t_0$, without leaving a given compact set. Then we add independent bounces, and say that the coupling is still a success if no bounce happens before time $t_0$, which gives the desired dependency with respect to $U$.
\end{proof}


 \subsection{Proof of \Cref{theo:geo_ergo_gene}}
\label{sec:proof_main_theo}

 
The proof follows from  \Cref{lem:lyapunov} and \Cref{lem:condition-doeblin}, and an application of \cite[Theorem 6.1]{meyn:tweedie:1993:III}. However, \cite[Theorem 6.1]{meyn:tweedie:1993:III} is non quantitative and for the proofs of \Cref{sec:metast-regime-anne} need  explicit bounds for the convergence of $(P_t)_{t \geq 0}$ to $\pi$. To this end, we give a quantitative version of \Cref{theo:geo_ergo_gene} in  \Cref{app:quantitative_bounds} based on \cite[Theorem 1.2]{HairerMattingly2011}.

\subsection{Proofs of \Cref{theo:V_geo_ergo_loi_borne}}
\label{sec:autre-demo_1}

In each case, we apply  \Cref{theo:geo_ergo_gene}. Set $H(t)=t^2$ for $t\in\rset$.
Consider $r>0$ such that $\delta = \mathbb P(|Y_1|>r) >0$ where $Y=(Y_1,\dots,Y_d)\in\msy$ is distributed according to $\loiy$. Note that \Cref{ass:geo_erg_general}-\ref{ass:geo_erg_general_eq_4} is automatically satisfied in all the cases. 

Under \Cref{ass:geo_ergo_1bis}, set $\bU(x) = U(x)$ and $\ell(x) = 1$ for all $x\in\rset^d$. All the conditions of \Cref{ass:geo_erg_general} are sastisfied and so is \eqref{eq:condition_geo_4}  by \Cref{rem:additio:ass} since $\lim_{\norm{x} \to \plusinfty} \norm{\nabla U(x) } = \plusinfty$. 

Under \Cref{ass:geo_ergo_2}, set $\bU(x) = U^\varsigma(x)$ and $\ell(x) = 1$ for any $x \in \rset^d$. Then \Cref{ass:geo_erg_general} is satisfied. In addition, \eqref{eq:condition_geo_4} holds by \Cref{rem:additio:ass} since  under  \Cref{ass:geo_ergo_2}
\begin{equation*}
  \lim_{\norm{x} \to \plusinfty} \{\ell(x) \norm{\nabla U(x)} / \norm{\nabla \bU(x) }\}= \plusinfty \eqsp.
  \end{equation*}

  Under \Cref{ass:geo_ergo_2_b}, set $\bU(x) = U^\varsigma(x)$ and $\ell(x) = 1/(1+\norm{\na \bU(x)})$ for all $x\in\rset^d$. All the conditions of \Cref{ass:geo_erg_general} are satisfied and  \eqref{eq:condition_geo_4} holds by \Cref{rem:additio:ass} since $ \lim_{\norm{x} \to \plusinfty} \ell(x) = 0$.

\subsection{Proof of \Cref{theo:V_geo_ergo_lambda_0}}
\label{sec:proof_lambda_0}
We apply  \Cref{theo:geo_ergo_gene} again.
 Set $H(t)=t^2$ for $t\in\rset$.
Consider $r>0$ such that $\delta = \mathbb P(|Y_1|>r) >0$ where $Y=(Y_1,\dots,Y_d)\in\msy$ is distributed according to $\loiy$. Note that \Cref{ass:geo_erg_general}-\ref{ass:geo_erg_general_eq_4} is automatically satisfied. 
Set $\bU(x) = U(x)$ and $\ell(x) = 1$ for any $x \in \rset^d$. Then, the conditions of \Cref{ass:geo_erg_general} hold with  $c_4$ arbitrarily small. Therefore,  \eqref{eq:condition_geo_4} is satisfied  if $\rate$ is small enough.

\subsection{Proof of \Cref{theo:V_geo_ergo_loi_non_borne}}
\label{sec:proof-crefth}

We apply  \Cref{theo:geo_ergo_gene}. Set $H(t) = \eta t^2$ for $\eta$ small enough such that \Cref{ass:geo_erg_general}-\ref{ass:geo_erg_general_eq_4} is satisfied.  Set  $\bU(x) = U^\varsigma(x)$ for any $x\in\rset^d$. Note that 
\begin{eqnarray*}
 \{\underset{y\in \msa_x}\sup\norm{y}^2\} \norm{\na^2 \bU(x)}    \leqslant  3 \eta^{-1} \bU(x) \norm{\na^2 \bU(x)} \\
 \leqslant  C U^\varsigma(x) \po  \norm{\na^2 U(x)} U^{\varsigma-1}(x) + \norm[2]{\na U(x)} U^{\varsigma-2}(x)\pf 
\end{eqnarray*}
for some $C>0$, hence is bounded. Then, the proof follows the same lines as the proof of \Cref{theo:V_geo_ergo_loi_borne} under \Cref{ass:geo_ergo_2}, and is omitted.


\section{Miscellaneous}
\label{sec:miscellaneous}
\subsection{A specific and explicit bound for a toy model}
\label{sec:prec-expl-bound}

Following carefully the proofs of  \Cref{theo:geo_ergo_gene}, it is possible to get explicit bounds on the values of $C,\rho>0$ such that \eqref{eq:def_unif_V_geo_ergo} holds. Nevertheless, the  obtained bounds are not sharp. In particular, in \Cref{sec:couplage}, when we try to couple two processes, we do not make any use of the potential $U$. In fact, at this step, $U$ only plays the role of an hindrance in the minorization condition given by \Cref{lem:condition-doeblin} based on \Cref{LemCouplageCompact}-\Cref{lem:condition-doeblin-precis}. We try to couple the processes using only the refreshment jumps, and hope that, during this attempt, no bounce occurs. We now illustrate  on a toy model how an analysis which is model specific can circumvent this flaw. It shows that the explicit bounds we obtain in \Cref{LemCouplageCompact} may be far from optimality for some problems.

Consider the smooth manifold $\msd = (\rset/ \zset ) \times (\rset/ \eta \zset )^{d-1}$ for $d \geq 2$ and $\eta >0$, and let $\projd:\rset^d \to \msd$ be the corresponding projection (also referred to as quotient map). We set in this section $\pi$ to be the uniform distribution on $\msd$, $\msy = \rset^d$  and $\loiy $ to be the zero-mean $d$-dimensional Gaussian distribution on $\rset^d$ with covariance matrix $\sigma^2 \Id$, $\sigma^2 >0$. In this setting, $U$ is simply the function which is identically equal to $0$ on $\msd$. A BPS sampler $(X_t,Y_t)_{t \geq 0}$ is defined as in \Cref{sec:presentation-bps} to target $\pi \otimes \loiy$. The construction is in all the respect the same, just by replacing the  state space $\rset^d \times \msy$ by $\msd \times \msy$ and setting $X_t = \projd(X_{S_n}+tY_{S_n})$ for $t \in \coint{S_n,S_{n+1}}$ in place of $X_t = X_{S_n}+tY_{S_n}$. To show the convergence of the corresponding semi-group $(P_t^{\msd})_{t\geq 0}$, we show a uniform Doeblin condition \cite[Chapter 16]{MeynTweedie} holds using a direct coupling argument. 

Note that $\msd$ has no boundary and therefore no reflexion has to be take care of but it is worthwhile to mention that by a deterministic transformation of this process from $\msd$ to  $\ccint{0,1}\times\ccint{0,\eta}^{d-1}$, we  end up with  the reflected PDMP process targeting the uniform distribution on $\ccint{0,1}\times\ccint{0,\eta}^{d-1}$ described in \cite{BBCDDFGV2017}.

The process that we consider in this section  can be seen as a toy model for  convex potentials.  If $\eta$ is small, which is the analogous of multi-scales problems, then the proof of \Cref{theo:geo_ergo_gene} would yield a mixing time of order $\eta^d$. Indeed, in \Cref{sec:couplage}, the coupling is considered a failure as soon as one of the processes bounce (or, here, is reflected at the boundary). Hence, a successful coupling would need that, at the first refreshment time, the new Gaussian velocity is directed mainly according to the first dimension, which is unlikely. As we will see, this is a too pessimistic bound.

\begin{prop}\label{PropTore}
For all $x,\tx\in \msd$, $y,\ty \in \rset^d$ and $t >0$,
\begin{multline*}
  \tvnorm{\updelta_{(x,y)} P_t^{\msd}- \updelta_{(\tx,\ty)} P_t^{\msd}}  \\
  \leq  2 \parentheseDeux{\proba{N_t\leq 1} + \expe{\1_{\ccint{2,\plusinfty}}(N_t) \defEns{1-2\Phibf\parenthese{\frac{(1+\eta^2(d-1))^{1/2}}{2(S_{N_t}-S_1)}}}}} \eqsp. 
  \end{multline*}
  where $\Phibf$ is the cumulative distribution function of the standard Gaussian distribution on $\rset$, $(N_t)_{t\geq 0}$ is a Poisson process with rate $\rate$ and jump times
  $(S_i)_{i\in\mathbb N}$, with $S_0=0$.  
\end{prop}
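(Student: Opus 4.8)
The plan is to prove a merging bound by an explicit coupling and then invoke the standard inequality $\tvnorm{P_t^{\msd}((x,y),\cdot)-P_t^{\msd}((\tx,\ty),\cdot)}\leq 2\,\PP((X_t,Y_t)\neq(\tX_t,\tY_t))$, valid for any coupling $(X_t,Y_t,\tX_t,\tY_t)$ of the two laws. The decisive simplification here is that $U\equiv 0$, so $\nabla U\equiv 0$, hence $T_{n+1}^{(2)}=+\infty$ in \eqref{eq:def-temps-bounce} and the process never bounces: on $\msd$ both copies merely translate at constant velocity between refreshment times, and all randomness is carried by the common Poisson stream of refreshment times $0=S_0<S_1<S_2<\dots$ (with counting process $(N_t)$) and the i.i.d.\ $\mathcal N(0,\sigma^2\Id)$ velocities drawn at them. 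I would couple the two processes by the same Poisson clock, let them move freely up to $S_1$, and then couple the velocities drawn from $S_1$ onwards so that (i) the two positions on $\msd$ coincide at the last refreshment time $S_{N_t}$ before $t$, and (ii) at $S_{N_t}$ both copies receive the \emph{same} velocity. Once (i)--(ii) hold, $(X_s,Y_s)=(\tX_s,\tY_s)$ for all $s\geq S_{N_t}$, in particular at time $t$; and (i)--(ii) force $N_t\geq 2$, which accounts for the term $\PP(N_t\leq 1)$ (on $\{N_t\leq 1\}$ one bounds the failure probability trivially by $1$).

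For step (i) I would use a reflection (``mirror'') coupling of the Gaussian velocities between $S_1$ and $S_{N_t}$, in the spirit of \Cref{LemMiroir} and of the construction in the proof of \Cref{LemCouplageCompact}: keeping the components orthogonal to a suitable direction identical in the two copies, and driving the remaining one-dimensional degree of freedom by a single Brownian motion, the event that the accumulated relative displacement reaches the prescribed target becomes, by the reflection principle, a one-sided level-crossing event over a time horizon comparable to $S_{N_t}-S_1$. The only place where the geometry of $\msd=(\rset/\zset)\times(\rset/\eta\zset)^{d-1}$ enters is the size of that target: the difference of the two positions, viewed in $\msd$, has a representative in the fundamental domain whose Euclidean norm is bounded by the diameter $(1+\eta^2(d-1))^{1/2}$; this is why only the combination $1+\eta^2(d-1)$, rather than the full dimension, appears. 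Conditionally on the Poisson path, the reflection principle then yields a lower bound on the conditional merging probability of the form $\PP\!\left(|\mathcal N(0,1)|\geq \tfrac{(1+\eta^2(d-1))^{1/2}}{2(S_{N_t}-S_1)}\right)=2\Phibf\!\left(-\tfrac{(1+\eta^2(d-1))^{1/2}}{2(S_{N_t}-S_1)}\right)$, i.e.\ the quantity in the statement (equivalently, the conditional failure probability is at most $2\Phibf\bigl(\tfrac{(1+\eta^2(d-1))^{1/2}}{2(S_{N_t}-S_1)}\bigr)-1$). One must also check that, marginally, each copy is a genuine BPS for $\pi\otimes\loiy$, which holds because a reflected Brownian motion is again a Brownian motion, so every velocity retains the law $\mathcal N(0,\sigma^2\Id)$ --- this is exactly the superposition argument already used in the proof of \Cref{LemCouplageCompact}.

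Combining the pieces, I would condition on the Poisson path, bound the failure of the coupling by the conditional bound above on $\{N_t\geq 2\}$ and by $1$ on $\{N_t\leq 1\}$, take expectations, and substitute the resulting estimate for $\PP((X_t,Y_t)\neq(\tX_t,\tY_t))$ into the coupling inequality for total variation; this produces precisely the stated bound. The main obstacle is step (i): a reflection coupling performed only at $S_1$ governs the velocity on $[S_1,S_2]$ alone, so to obtain the full horizon $S_{N_t}-S_1$ one must couple the whole sequence of refreshment velocities on $[S_1,S_{N_t}]$ in a non-Markovian way to a single auxiliary Brownian motion, while simultaneously guaranteeing that (a) the accumulated relative displacement lands on the correct coset of the lattice $\zset\times(\eta\zset)^{d-1}$ with the claimed probability and (b) each individual refresh velocity still has exactly law $\loiy$. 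Achieving (a) and (b) at once, together with the lattice bookkeeping, is the only delicate point; the absence of bounces, the identification of the marginals, and the passage from merging to total variation are then routine.
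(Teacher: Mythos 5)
Your outline follows the same strategy as the paper: both copies share one Poisson refreshment clock, fly freely up to $S_1$, the Gaussian velocities drawn after $S_1$ are mirror-coupled so that the positions agree at $S_{N_t}$ and the common velocity drawn there merges the processes (forcing $N_t\geq 2$, hence the $\proba{N_t\leq 1}$ term), the distance $\norm{X_{S_1}-\tX_{S_1}}$ is bounded by the diameter $(1+\eta^2(d-1))^{1/2}$ of $\msd$, and the coupling characterization of total variation finishes the argument. Incidentally, your conditional failure probability $2\Phibf\parenthese{D/(2(S_{N_t}-S_1))}-1=1-2\Phibf\parenthese{-D/(2(S_{N_t}-S_1))}$ is the quantity the paper actually uses downstream in \Cref{coro:toy_model}; the sign inside $\Phibf$ in the displayed statement of \Cref{PropTore} is a typo.

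However, the step you defer as ``the only delicate point'' is precisely the content of the paper's proof, and it is not routine. The paper realizes all post-$S_1$ velocities as increments of two mirror-coupled Brownian motions $(W_s)_{s\geq0}$, $(\tW_s)_{s\geq0}$ on $\msd$ read along the quadratic time change $s\mapsto (s-S_1)^2$: by \eqref{eq:def_g_i_toy}, $G_i=(W_{(S_{i+1}-S_1)^2}-W_{(S_i-S_1)^2})/(S_{i+1}-S_i)$, so that $X_{S_i}=\projd(X_{S_1}+W_{(S_i-S_1)^2})$ and merging reduces to the Brownian coupling time \eqref{eq:def_coupling_time_toy} being at most $(S_{N_t}-S_1)^2$, which via \eqref{eq:couplage_toy_model} produces exactly the factor $2(S_{N_t}-S_1)$ inside $\Phibf$; working with Brownian motions on $\msd$ itself also disposes of your ``coset bookkeeping''. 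Your sketch does not produce this horizon: if the refreshed velocities are genuinely i.i.d.\ with law $\loiy$ and independent of the clock, the accumulated relative displacement over $[S_1,S_{N_t}]$ is Gaussian with standard deviation proportional to $\parenthese{\sum_{i=1}^{N_t-1}(S_{i+1}-S_i)^2}^{1/2}$, which by Cauchy--Schwarz is in general strictly smaller than $S_{N_t}-S_1$, so a reflection-principle event over a horizon merely ``comparable to $S_{N_t}-S_1$'' does not give the stated inequality. In other words, your requirements (a) and (b) are in genuine tension with the constant you need, and it is exactly the quadratic time change (and the verification of the marginal laws that comes with it) that resolves this in the paper; your proposal reproduces the architecture but omits the construction on which the quantitative bound rests, while labelling as routine (``identification of the marginals'') the very point that is delicate.
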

\begin{proof}
  Let $(N_t)_{t\geq 0}$ be a Poisson process with rate $\rate$ and jump times
  $(S_i)_{i\in\mathbb N}$, with $S_0=0$.  
  Set first for $t \in \coint{0,S_1}$, $X_t = \projd(x + t y)$,   $Y_t = y$, $X_{S_1} = \projd(x+S_1y)$, $\tX_t = \projd(\tx + t \ty)$,   $\tY_t = \ty$, $\tX_{S_1} = \projd(\tx+S_1\ty)$.
  By \cite[Section
  2]{lindvall:rogers:1986}, given $(S_i)_{i\in\mathbb N}$, there exist
  two Brownian motions $(W_t)_{t \geq 0}$ and $(\tW_t)_{t \geq 0}$ on $\msd$ such that for any $t >0$,
  \begin{multline}
    \label{eq:couplage_toy_model}
    \probaCond{X_{S_1} + W_t = \tX_{S_1} + \tW_t }{ (S_k)_{k \geq 0}} =     \probaCond{T_{\rmcc} \leq t }{(S_k)_{k \geq 0}} \\= 1-2 \Phibf\parenthese{-\left. \norm{X_{S_1} -\tX_{S_1}}\middle/ (2t^{1/2}) \right.} \eqsp, 
  \end{multline}
 and
  \begin{equation}
    \label{eq:def_coupling_time_toy}
    T_{\rmcc} = \inf\{s \geq 0 \, : \, X_{S_1} + W_s = \tX_{S_1} + \tW_s\} \eqsp.
  \end{equation}
  We can define then for any $i \in \nset^*$,
  \begin{equation}
    \label{eq:def_g_i_toy}
    \begin{aligned}
    G_i = (W_{(S_{i+1}-S_1)^2} - W_{(S_{i}-S_1)^2})/(S_{i+1}-S_i) \eqsp, \\ \tilde{G}_i = (\tW_{(S_{i+1}-S_1)^2} - \tW_{(S_{i}-S_1)^2})/(S_{i+1}-S_i)      \eqsp.
    \end{aligned}
  \end{equation}
  Note that by the Markov property of $(W_t)_{t \geq 0}$ and  $(\tW_t)_{t \geq 0}$, $(G_i)_{ i\in\nset^*}$ and $(\tG_i)_{ i\in\nset^*}$ are sequences of \iid~$d$-dimensional standard Gaussian random variables.

Define $Y_{S_i} = G_1$, $\tY_{S_1} = \tG_1$ and now  assume that $(X_t,Y_t)$, $(\tX,\tY_t)$ are defined for $t \in \ccint{0,S_k}$, $k \geq 1$.  
Set for $t \in \ccint{S_k,S_{k+1}}$, $X_t = \projd(X_{S_k} + (t-S_k) G_{k+1})$, $\tX_t =\projd( \tX_{S_k} + (t-S_k) \tG_{k+1})$, for $t \in \coint{S_k,S_{k+1}}$, $Y_{t} = Y_{S_{k}}$, $\tY_t = \tY_{S_k}$ and $Y_{S_{k+1}} = G_{k+1}$, $\tY_{S_{k+1}} = \tG_{k+1}$.
It follows then by construction that for any $t \geq 0$, $(X_t,Y_t)_{t \geq 0}$ is distributed according to $P_t^{\msd}((x,y),\cdot)$ and $(\tX_t,\tY_t)_{t \geq 0}$ is distributed according to $P_t^{\msd}((\tx,\ty),\cdot)$.
Then it remains to bound $    \proba{(X_t,Y_t) = (\tX_t,\tY_t)}$ by definition of the total variation norm. 

Note that if $(S_{i+1}-S_1)^2 \geq (t-S_1)^2 \geq (S_i - S_1)^2 \geq T_{\rmcc} \geq (S_{i-1}-S_1)^2$, $i \geq 2$, we have by  \eqref{eq:def_coupling_time_toy}-\eqref{eq:def_g_i_toy} and construction $(X_t,Y_t) = (\tX_t,\tY_t)$.
  Therefore, we get $\{(S_{N_t}-S_1)^2 \geq T_{\rmcc} \} \cap \{N_t >1\} \subset \{(X_t,Y_t) = (\tX_t,\tY_t)\}$  and we obtain
  \begin{multline*}
    \proba{(X_t,Y_t) = (\tX_t,\tY_t)} \leq \proba{\{ S_{N_t} \leq S_1+T_{\rmcc} \} \cap \{N_t \leq 1\}} \\
    \leq \proba{N_t\leq 1} + \proba{\{N_t \geq 2\} \cap\{(S_{N_t}-S_1)^2 \geq T_{\rmcc}\} } \eqsp.
  \end{multline*}
The proof is then concluded by conditioning with respect to $(S_k)_{k \in \nset}$ using \eqref{eq:couplage_toy_model} and for any $x \in \msd$, $\norm{x} \leq (1+\eta^2(d-1))^{1/2}$.

\end{proof}

\begin{corollary}
  \label{coro:toy_model}
There exist $C \geq 0$ and $\varepsilon \in \ocint{0,1}$ independent of $d$ such that setting $t_{\rmcc} = Cd^{1/2}$, for all $x,\tx\in \msd$ and $y,\ty \in \rset^d$,
\begin{equation*}
  \tvnorm{\updelta_{(x,y)} P_{t_{\rmcc}}^{\msd}- \updelta_{(\tx,\ty)} P_{t_{\rmcc}}^{\msd}}  \leq (1-\varepsilon)  \eqsp. 
  \end{equation*}  
\end{corollary}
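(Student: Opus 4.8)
The plan is to combine the coupling bound of \Cref{PropTore} with a few
elementary estimates on the Poisson process $(N_t)_{t\geq0}$ and its jump
times $(S_i)_{i\in\nset}$, the key point being that the horizon
$t_{\rmcc}=Cd^{1/2}$ is chosen exactly so that the dimension-dependent ratio
governing the Gaussian reflexion coupling stays bounded. So I would fix
$t=t_{\rmcc}=C\sqrt d$, with $C=C(\rate,\eta)>0$ to be chosen at the end, and
set $D_d=(1+\eta^2(d-1))^{1/2}$. By \Cref{PropTore} it suffices to show that
both $\proba{N_t\leq1}$ and $\expe{\1_{\ccint{2,\plusinfty}}(N_t)\,q_d(S_{N_t}-S_1)}$
are small, where $q_d(s)=1-2\Phibf\parenthese{-D_d/(2s)}=2\Phibf\parenthese{D_d/(2s)}-1\in\ccint{0,1}$
is the coupling-failure factor appearing in \Cref{PropTore}; note that $q_d$
is non-increasing (since $\Phibf$ is non-decreasing) and that $q_d(s)\to0$ as
$D_d/s\to0$.

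First, since $N_t$ has the Poisson law with parameter $\rate t$, one has
$\proba{N_t\leq1}=\rme^{-\rate t}(1+\rate t)$, which is non-increasing in
$t$; as $d\geq2$ gives $t\geq\sqrt2\,C$, this yields
$\proba{N_t\leq1}\leq\rme^{-\sqrt2\rate C}(1+\sqrt2\rate C)=:\alpha_1(C)$.
Second, I would control the probability that the first and last refreshments
before $t$ are close: if $S_1\leq t/4$ and $S_{N_t}\geq3t/4$ then
$S_{N_t}-S_1\geq t/2$, whence $\{S_{N_t}-S_1<t/2\}\subseteq\{S_1>t/4\}\cup\{S_{N_t}<3t/4\}$;
since $S_1$ is exponential with rate $\rate$ and $\{S_{N_t}<3t/4\}$ is
contained in the event that $(N_t)$ has no jump on $\ccint{3t/4,t}$, we get
$\proba{S_{N_t}-S_1<t/2}\leq2\rme^{-\rate t/4}\leq2\rme^{-\sqrt2\rate C/4}=:\alpha_2(C)$
for $d\geq2$. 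Third, because $1+\eta^2(d-1)\leq(1+\eta^2)d$ for every $d\geq1$,
we have $D_d/t\leq\sqrt{1+\eta^2}/C$, so on the event $\{S_{N_t}-S_1\geq t/2\}$,
by monotonicity of $q_d$,
\begin{equation*}
q_d(S_{N_t}-S_1)\leq q_d(t/2)=2\Phibf\parenthese{D_d/t}-1\leq2\Phibf\parenthese{\sqrt{1+\eta^2}/C}-1=:\beta(C).
\end{equation*}
Using $q_d\leq1$ off that event together with the second estimate gives
$\expe{\1_{\ccint{2,\plusinfty}}(N_t)\,q_d(S_{N_t}-S_1)}\leq\beta(C)+\alpha_2(C)$.

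Combining the three bounds with \Cref{PropTore}, for every $d\geq2$ and all
$x,\tx\in\msd$, $y,\ty\in\rset^d$,
\begin{equation*}
\tvnorm{\updelta_{(x,y)}P^{\msd}_{t_{\rmcc}}-\updelta_{(\tx,\ty)}P^{\msd}_{t_{\rmcc}}}\leq2\parenthese{\alpha_1(C)+\alpha_2(C)+\beta(C)}.
\end{equation*}
Since $\alpha_1(C),\alpha_2(C)\to0$ and $\beta(C)\to2\Phibf(0)-1=0$ as
$C\to\plusinfty$, I would then fix $C=C(\rate,\eta)$ large enough that
$\alpha_1(C)+\alpha_2(C)+\beta(C)<1/2$ and set
$\varepsilon:=1-2\parenthese{\alpha_1(C)+\alpha_2(C)+\beta(C)}\in\ocint{0,1}$,
so that $t_{\rmcc}=Cd^{1/2}$ and $\varepsilon$ do not depend on $d$, which
proves \Cref{coro:toy_model}. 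The only model-specific step, and the one I
expect to be the crux, is the third estimate: for a generic potential the
Lyapunov/Doeblin route of \Cref{theo:geo_ergo_gene} would force $t$ to be of
order $\eta^{-d}$ to absorb the cost of a bounce-free coupling, whereas here
$U\equiv0$, so the Gaussian coupling succeeds with a probability bounded below
uniformly in $d$ as soon as $t\gtrsim\diam(\msd)\asymp\sqrt d$; everything
else is a routine Poisson tail computation.
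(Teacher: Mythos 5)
Your proposal is correct and follows essentially the same route as the paper's own proof: apply \Cref{PropTore}, bound $\proba{N_t\leq 1}$ and the probability that the gap between the first and last refreshment times before $t$ is smaller than $t/2$ by elementary Poisson/exponential tail estimates, and control the reflection-coupling failure term via the Gaussian CDF once $t=C\sqrt d$ makes $(1+\eta^2(d-1))^{1/2}/t$ bounded uniformly in $d$. The only differences are cosmetic (you split on $S_1$ where the paper uses $S_2$, and you let $C\to\infty$ in $2\Phibf(\sqrt{1+\eta^2}/C)-1$ instead of invoking the bound $1/2-\Phibf(-s)\leq s/(2\uppi)^{1/2}$), and you correctly interpret the $\Phibf$ term in \Cref{PropTore} as the failure probability $2\Phibf\parenthese{(1+\eta^2(d-1))^{1/2}/(2(S_{N_t}-S_1))}-1$ of the reflection coupling, which is how the paper itself uses it.
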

\begin{proof}
  By \Cref{PropTore} and using the same notations, for all $x,\tx\in \msd$, $y,\ty \in \rset^d$ and $t >0$, we have since for any $s\geq 0$, $1/2-\Phibf(-s) \leq 1\wedge\{s/(2\uppi)^{1/2}\}$,
  \begin{align*}
& 2^{-1}  \tvnorm{\updelta_{(x,y)} P_t^{\msd}- \updelta_{(\tx,\ty)} P_t^{\msd}}  \\
&\leq  \proba{S_2 \geq t/4} +\proba{S_2 \leq t/4, S_{N_t}-S_2 \leq t/2} \\
    &  \qquad + \expe{\1_{\ccint{0,t/4}}(S_2) \1_{\coint{t/2,\plusinfty}}( S_{N_t}-S_2) \defEns{1-2\Phibf\parenthese{\frac{(1+\eta^2(d-1))^{1/2}}{2(S_{N_t}-S_1)}}}}\\
& \leq \proba{S_2 \geq t/4} +\proba{ S_{N_t} \leq 3t/4}  + \frac{\{2(1+\eta^2(d-1))\}^{1/2}}{t\uppi^{1/2}}  \eqsp. 
  \end{align*}
  Since $\{S_{N_t} \leq 3t/4\} \subset \{ N_t - N_{3t/4} = 0 \}$, and $N_t - N_{3t/4}$ follows a Poisson distribution with parameter $t\lambda/4$, we get
for all $x,\tx\in \msd$, $y,\ty \in \rset^d$ and $t >0$
  \begin{equation*}
    2^{-1}  \tvnorm{\updelta_{(x,y)} P_t^{\msd}- \updelta_{(\tx,\ty)} P_t^{\msd}}  \leq \proba{S_2 \geq t/4} + \rme^{-\lambda t/4}  + \frac{\{2(1+\eta^2(d-1))\}^{1/2}}{t\uppi^{1/2}} \eqsp. 
  \end{equation*}
  The proof then follows from a straightforward computation. 
\end{proof}

A direct consequence of \Cref{coro:toy_model} is that, with the same notations, for all $\nu \in \mcp(\msd\times\rset^d)$  and $t \geq 0$,
\begin{equation*}
  \tvnorm{\nu P_{t}^{\msd}- \pi \otimes \loiy }  \leq (1-\varepsilon)^{\floor{t/t_{\rmcc}}}  \eqsp. 
  \end{equation*} 
As a conclusion, for the considered toy model, we get that the rate of convergence scales only as $d^{1/2}$. Note that this result is optimal since the process has unit constant speed and the diameter of $\msd$ is $d^{1/2}$.

\subsection{The metastable regime and annealing}
\label{sec:metast-regime-anne}
The simulated annealing me\-thodology (see \cite{haario:saksman:1991}
and references therein) aims at finding a global minimum of a function
$U$ and not sampling to the target distribution $\pi$ given by
\eqref{eq:density_pi}. However, roughly, these methods need to
approximately sample from the family of distributions
$\{\pi_{\upbeta} \, : \, \upbeta >0\}$, where $\pi_{\upbeta}$ is the
distribution on $\rset^d$ associated with the potential
$x \mapsto \upbeta U(x)$, for $\upbeta >0$. To do so, we will study in this
section a simulated annealing algorithm based on the BPS, extending
the results of \cite[Theorem 1.5]{MonmarcheRTP} on the torus $(\rset/\zset)^d$.  For the sake of simplicity, the study is restricted to
the following case:
\begin{assumption}
  \label{ass:recuit}
  \begin{enumerate}[label=(\roman*)]
  \item The potential $U \in \mrc^2(\rset^d)$ satisfies
    \begin{align*}
      \int_{\rset^d} \exp(- U(x)/2) \rmd x  &< \infty \eqsp, \qquad &\lim_{\norm{x} \to \plusinfty} U(x) &= \plusinfty, \\ \liminf_{\norm{x}\rightarrow\infty} \norm{\nabla U(x)} &>  0 \eqsp, \qquad  &  \sup_{x \in \rset^d} \norm{\nabla^2 U(x)} &< \infty \eqsp.
    \end{align*}
Moreover, without loss of generality, $U(0)=\min_{\rset^d} U = 0$.
\item $\msy = \ball{0}{M}$ for $M >0 $ and  the distribution $\loiy$ on $\msy$ is rotation invariant.
  \end{enumerate}
\end{assumption}

In the rest of this section, \Cref{ass:recuit} is enforced . However, note the arguments also work under \Cref{ass:geo_erg_general} (in particular when $Y=\rset^d$, $\loiy$ has a Gaussian moment and $U$ is a perturbation of an $\chi$-homogeneous potential with $\chi>1$, as in \Cref{propo:alpha_homogeneous}), which is not implied by \Cref{ass:recuit}. 

For a measurable function $\beta : \rset_+ \to \rset_+$, referred to in the following as the cooling schedule, we consider
in this section the simulated annealing BPS process
$(X_t^{(\beta)},Y_t^{(\beta)})$ defined as follows.  Consider some initial
point $(x,y) \in \rset^{d} \times \msy$, and the family of \iid~random
variables $(E_i, F_i, G_i)_{i \in \nset^*}$ introduced in
\Cref{sec:presentation-bps}. Let $\rate >0$, $(X^{(\beta)}_0, Y^{(\beta)}_0) = (x,y)$ and $S^{(\beta)}_0=0$. We define by
recursion the jump times of the process and the process itself. For
all $n \geq 0$, consider
\begin{align*}
T_{n+1}^{(1,\beta)} &=   E_{n+1}/ \rate \notag\\
T_{n+1}^{(2,\beta)} &= \inf\ensemble{t \geq 0}{\int_0^t\left\{\beta(s) \ps{Y^{(\beta)}_{\S^{(\beta)}_n}}{\nabla U (X^{(\beta)}_{\S^{(\beta)}_n}+s Y^{(\beta)}_{\S^{(\beta)}_n})}_+ \right\} \rmd s \geq E^{2}_{n+1}} \\
T^{(\beta)}_{n+1} &= T_{n+1}^{(1,\beta)} \wedge T_{n+1}^{(2,\beta)} .\notag
\end{align*}
Set $S^{(\beta)}_{n+1} = S^{(\beta)}_n + T^{(\beta)}_{n+1}$, $(X^{(\beta)}_t,Y^{(\beta)}_t) = ( X^{(\beta)}_{\S^{(\beta)}_n} + t Y^{(\beta)}_{\S^{(\beta)}_n},Y^{(\beta)}_{\S^{(\beta)}_n})$,  for all $t \in \cointLigne{S^{(\beta)}_n,S^{(\beta)}_{n+1}}$,
$X^{(\beta)}_{S^{(\beta)}_{n+1}} = X^{(\beta)}_{\S^{(\beta)}_n} + T^{(\beta)}_{n+1} Y^{(\beta)}_{\S^{(\beta)}_n} $ and 
\begin{equation*}
 Y^{(\beta)}_{\S^{(\beta)}_{n+1}} = 
  \begin{cases}
     G_{n+1} &  \text{ if $T^{(\beta)}_{n+1} = T_{n+1}^{(1,\beta)}$} \\
 \Refl(X^{(\beta)}_{\S^{(\beta)}_{n+1}} , Y^{(\beta)}_{\S^{(\beta)}_n}) & \text{ otherwise} \eqsp,
  \end{cases}
\end{equation*}
where $\Refl$ is defined by 
\eqref{eq:definition_Refl}.
Note that under \Cref{ass:recuit}, $\msy$ is bounded and therefore by \cite[\Cref{prop:non-explosion_BPS}]{DurmusGuillinMonmarche:toolbox}, $ \sup_{n \in \nset} \S^{(\beta)}_n = \plusinfty$.

 Therefore almost surely $(X^{(\beta)}_t,Y^{(\beta)}_t)_{t \geq 0}$ is a $(\rset^{d} \times \msy)$-valued \cadlag~process. By \cite[Theorem
25.5]{davis:1993}, the BPS process $(X^{(\beta)}_t,Y^{(\beta)}_t)_{t \geq 0}$ defines
a non-homogeneous strong Markov semi-group $(P_t)_{t\geqslant 0}$ given for all
$s,t \in \rset_+$, $(x,y)\in \rset^d \times \msy$ and $\msa \in \mcb{\rset^d \times \msy}$ by
\begin{equation*}
P_{t,t+s}^{(\beta)} ((x,y),\msa)  =  \proba{(X^{(\beta)}_{s},Y^{(\beta)}_{s}) \in \msa} \eqsp, 
\end{equation*}
where $\sequenceD{X^{(\beta)}_u,Y^{(\beta)}_u}[u][ \rset_+]$ is the annealed BPS process started
from $(x,y)$ and cooling schedule $s \mapsto \beta(t+s)$. 
As it is usual in simulated annealing if $t \mapsto \beta(t)$ goes to infinity sufficiently slowly for the process $(X^{(\beta)}_t,Y^{(\beta)}_t)$ to approach its instantaneous equilibrium $\exp(-\beta(t) U) \otimes \loiy$, then $X^{(\beta)}_t$ should be close to a global minimum of $U$ with high probability. 
\begin{assumption}
   \label{ass:recuit-beta}
  The function $t \mapsto \beta(t)$ is increasing,  satisfies $\lim_{t \to \plusinfty} \beta(t) = \plusinfty$, $\beta(0) \geq 1$ and  there exist $s_0,D_1,D_2>0$ with $ D_1\geqslant D_2$ such that for all $t$ large enough, $\beta(t) \geqslant D_2 \ln t$ and $\beta(t+s_0) - \beta(t)  \leq D_1/t$.
\end{assumption}

We can then adapt well-known techniques from the simulated annealing literature to extend the result of \cite{MonmarcheRTP} which restricts its study to the torus $(\rset/\zset)^d$. A crucial step is to show that for fixed $s,t \geq 0$, $s \leq t$, the Markov kernel $P_{s,t}^{\beta}$ is a contraction in an appropriate metric with constants which have to be explicit in $s,t$ and the cooling schedule $\beta$. However, using our approach for the proof of the geometric ergodicity of BPS, we were able to complete such a task. 
\begin{theorem}\label{thm:ThmRecuit}
Assume  \Cref{ass:recuit}. There exists $\theta >0$ such that if \Cref{ass:recuit-beta} holds with  $D_1 \leq \theta^{-1}$,
then
 for any $(x,y) \in \rset^d \times \msy$  and  any levels $\eta>\eta'>0$, there exists  $A>0$ such that, for all $t>0$, 
\begin{equation*}
\proba{ U(X^{(\beta)}_t) > \eta + \min_{\rset^d} U }  \leq A\exp(U(x)/2)/t^p \eqsp,
\end{equation*}
where $p = ( 1 - \theta D_1 ) \wedge( D_2\eta')>0$ and $(X^{(\beta)}_t,Y_t^{(\beta)})$ is the annealed BPS process starting from $(x,y)$.
\end{theorem}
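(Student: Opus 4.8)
The plan is to follow the classical two-step strategy for simulated annealing. Write $\pi_\beta$ for the probability measure with density proportional to $\exp(-\beta U)$ and $m_t := \pi_{\beta(t)} \otimes \loiy$ for the instantaneous equilibrium. One shows: (i) $m_t$ concentrates, $m_t(\{U > \eta + \min U\}\times\msy) \lesssim t^{-D_2 \eta'}$; and (ii) the law $\nu_t$ of $(X_t^{(\beta)}, Y_t^{(\beta)})$ stays close to $m_t$, with $\Vnorm[\lyap]{\nu_t - m_t} \lesssim \exp(U(x)/2)\, t^{-(1 - \theta D_1)}$ for a suitable weight $\lyap$. Since $\proba{U(X_t^{(\beta)}) > \eta + \min U} \le m_t(\{U>\eta+\min U\}\times\msy) + \Vnorm[\lyap]{\nu_t - m_t}$, combining (i) and (ii) yields the claim with $p = (1-\theta D_1)\wedge(D_2\eta')$.

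Step (i) is a standard Laplace estimate: for $\beta \ge 1$ and $\epsilon>0$, bounding the numerator of $\pi_\beta(U > \eta + \min U)$ via $\int_{\{U > \eta + \min U\}} e^{-\beta U}\,\rmd x \le e^{-(\beta - 1/2)(\eta+\min U)}\int e^{-U/2}\,\rmd x$ (finite by \Cref{ass:recuit}) and the denominator via $\int e^{-\beta U}\,\rmd x \ge e^{-\beta(\min U + \epsilon)}\Leb(\{U < \min U + \epsilon\})$, one gets $\pi_\beta(U > \eta + \min U) \le C_\epsilon\, e^{-\beta(\eta - \epsilon)}$; taking $\epsilon = \eta - \eta'$ and $\beta = \beta(t) \ge D_2\ln t$ gives the bound $C\, t^{-D_2\eta'}$.

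The heart of the proof is Step (ii): a quantitative contraction estimate for the inhomogeneous kernels $P_{s,s+t_0}^{(\beta)}$, with rate explicit in $\beta$, obtained by transposing \Cref{lem:lyapunov} and \Cref{lem:condition-doeblin-precis} to the frozen dynamics at fixed inverse temperature $\beta \ge \beta(0)$, whose generator $\generator_\beta$ is \eqref{eq:extended_generator_expression} with bounce rate $(\ps y{\nabla U(x)})_+$ replaced by $\beta(\ps y{\nabla U(x)})_+$. Two facts are used. First, a Foster–Lyapunov function $\lyap = V_{\beta(0)}$ of the form \eqref{eq:def_lyap_gene} can be chosen, using boundedness of $\msy$ and the scalings \eqref{eq:def_a}--\eqref{eq:def_vareps} (which keep $\kappa\beta(0)\le 1/2$), so that $\lyap(x,y) \le C\exp(U(x)/2)$, and one has the drift $\generator_\beta \lyap \le -c_0\lyap + d_0\1_\msk$ for a fixed compact $\msk$ and all $\beta \ge \beta(0)$ — indeed $\generator_\beta \lyap \le \generator_{\beta(0)}\lyap$, since the extra bounce rate $(\beta-\beta(0))(\ps y{\nabla U(x)})_+ \ge 0$ multiplies $\lyap(x,\Refl(x,y)) - \lyap(x,y) \le 0$ on the relevant set. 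Second, \Cref{lem:condition-doeblin-precis} applied with $\beta U$ in place of $U$ gives a minorization on $\msk$ over any window of fixed length $t_0$ with constant at least $\varepsilon\exp(-C_\star\beta\norm{\nabla U}_{\infty,\ball 0R})$. Feeding these into the quantitative machinery of \Cref{app:quantitative_bounds} (based on \cite{HairerMattingly2011}) yields that $P_{s,s+t_0}^{(\beta)}$ contracts $d_\lyap(\mu,\nu) := \Vnorm[\lyap]{\mu - \nu}$ (up to an equivalent distance) by a factor $1 - \rho_\beta$ with $\rho_\beta \ge c_\star\exp(-c_\star'\beta)$; and passing to the genuine inhomogeneous kernel costs an additional error $d_\lyap(\mu P_{s,s+t_0}^{(\beta)}, \mu P_{s,s+t_0}^{(\beta(s))}) \le C\,(\beta(s+t_0) - \beta(s))\,\mu(\lyap) \le C(D_1 t_0/s)\mu(\lyap)$, obtained by coupling the two dynamics so that they differ only through their bounce clocks.

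It then remains to iterate. Since $m_s$ is invariant for the frozen kernel $P_{s,s+t_0}^{(\beta(s))}$, and $d_\lyap(m_s, m_{s+t_0}) \le C|\beta(s+t_0) - \beta(s)| \le C D_1 t_0/s$ by a Lipschitz-in-$\beta$ bound for $\pi_\beta$ (uniform for $\beta\ge1$, using monotonicity of $\beta\mapsto\pi_\beta(U)$ and $\int e^{-U/2}<\infty$, which also gives $\sup_t m_t(\lyap)<\infty$), a triangle inequality over $[s, s+t_0]$ gives $d_\lyap(\nu_{s+t_0}, m_{s+t_0}) \le (1 - \rho_{\beta(s)})d_\lyap(\nu_s, m_s) + CD_1 t_0/s$. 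Writing $s = kt_0$ and using $\rho_{\beta(kt_0)} \ge c_\star(kt_0)^{-c_\star' a}$ with $a \le D_1/s_0$ (from $\beta(t+s_0) - \beta(t) \le D_1/t$, which forces $\beta(t)\le (D_1/s_0)\ln t + O(1)$), and $c_\star' a < 1$ whenever $D_1 < \theta^{-1}$ with $\theta := c_\star'/s_0$, the contribution of the initial condition decays faster than any power of $t$ times $d_\lyap(\nu_0, m_0) \le \lyap(x,y) + m_0(\lyap) \le Ce^{U(x)/2}$, while the accumulated errors telescope to $\lesssim D_1\, t^{-(1 - c_\star' a)} \le D_1\, e^{U(x)/2}\, t^{-(1 - \theta D_1)}$, which proves Step (ii). I expect the main obstacle to be exactly this uniform contraction estimate for the time-varying kernel: tracking the $\beta$-dependence of the quantitative constants (in particular inheriting the $\exp(-c\beta)$ minorization from \Cref{lem:condition-doeblin-precis}, whose statement is stated $U$-independent for this purpose) and controlling the discrepancy between the frozen and the genuine dynamics over a window while keeping the explicit dependence on $s$ — the $\lyap$-weighted coupling of the two bounce clocks driven by the close but distinct rates $\beta(s)(\ps y{\nabla U})_+$ and $\beta(r)(\ps y{\nabla U})_+$, $r\in[s,s+t_0]$, is the delicate point; everything else (the Laplace bounds and the telescoping sum) is routine but calculation-heavy.
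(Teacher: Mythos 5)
Your overall architecture is essentially the paper's: split $\PP(U(X_t^{(\beta)})>\eta+\min U)$ into concentration of the instantaneous equilibrium (a Laplace estimate giving $t^{-D_2\eta'}$) plus a weighted distance between the law of the annealed process and that equilibrium; control the latter by a windowed Harris argument in which the drift is transported to higher temperatures through $\generator_\beta \lyap\leq\generator_{\beta_0}\lyap$ (the extra bounce rate multiplies a nonpositive increment), the minorization constant degrades like $\exp(-C\int\beta)$ (this is exactly why \Cref{lem:condition-doeblin-precis} is stated with constants independent of $U$), the frozen-versus-annealed discrepancy over a window is handled by coupling the two bounce clocks, and the recursion is telescoped using $\beta(t)\lesssim (D_1/s_0)\ln t$. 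This is the content of \Cref{lem:lyap-recuit}, \Cref{corollary-recuit}, \Cref{lem:couplage-recuit}, \Cref{lem:recuit-en} and \Cref{lem:recuit-an}; whether one contracts the genuine inhomogeneous kernel directly (paper) or the frozen kernel plus a comparison term (your version) is immaterial.

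Two steps, however, do not hold as written. First, you anchor the uniform drift at $\beta(0)$: the monotonicity $\generator_\beta\lyap\leq\generator_{\beta(0)}\lyap$ is fine, but you still need the Foster--Lyapunov inequality at the base temperature, and under \Cref{ass:recuit} (only $\liminf\norm{\nabla U}>0$ and bounded Hessian) with an arbitrary refreshment rate $\rate$ and $\beta(0)$ possibly equal to $1$, condition \eqref{eq:condition_geo_4} can fail at $\beta(0)$ --- this is precisely the regime of \Cref{theo:V_geo_ergo_lambda_0}, where $\rate$ must be small. The paper fixes $\beta_*$ large enough (possible because $c_3\propto\beta$), gets the drift only for $\beta\geq\beta_*$, and controls the initial phase $\{t:\beta(t)<\beta_*\}$ with a second Lyapunov bound $\generator_\beta V_2\leq A_3V_2$ valid for all $\beta\geq1$ (\Cref{lem:lyap-recuit} and the first part of \Cref{corollary-recuit}); this is also where the $\exp(U(x)/2)$ prefactor enters. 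Second, your ``uniform for $\beta\geq1$'' Lipschitz bound $d_\lyap(m_s,m_{s+t_0})\leq C(\beta(s+t_0)-\beta(s))$ is not uniform in the $\rme^{U/2}$-weighted norm: differentiating in $\beta$ leaves you with $\int \rme^{U/2}U\,\rmd\pi_\beta$, whose normalization $1/\rmZ_\beta$ contributes a factor of order $\beta^{d/2}$, so one must either truncate at a level $\{U\leq l\}$ and pay a tail term $\rme^{-(\beta-1)l/2}$ (this is what \Cref{lem:recuit-en} does, via Pinsker, at the price of a $\sqrt{\beta_k-\beta_{k-1}}$ per-step error) or track the polylogarithmic loss explicitly. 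Both defects are repairable along the paper's lines, but as stated they are gaps in the argument.
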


  The proof is postponed to  Appendix A.


 \section*{Acknowledgements}
Alain Durmus acknowledges support from Chaire BayeScale ``P. Laffitte". Pierre Monmarch\'e acknowledges support from the French
ANR project ANR-12-JS01-0006 - PIECE and the European Research Council under the European Union's Seventh Framework Programme (FP/2007-2013) / ERC Grant Agreement number 614492. Arnaud Guillin and Pierre Monmarch\'e acknowledge support from the French
ANR-17-CE40-0030 - EFI - Entropy, flows, inequalities.
\bibliographystyle{plain}
\bibliography{../Bibliography/biblioBouncy}

  \appendix


\section*{Appendix A. Postponed proofs}


\subsection*{Proof of \Cref{propo:alpha_homogeneous}}

Note that since for all $x \in \rset^d$, $\norm{x} \geq 1$,
  \begin{equation}
  \label{propo:alpha_homogeneous_eq_0}    
    U_1(x) = \norm[\alpha]{x} U_1(x/\norm{x}) \eqsp,    
  \end{equation}
 that it is sufficient to show that there exists $C_1,C_2 >0$ such that for all $x \in \rset^d$, $\norm{x} \geq 1$ such that
  \begin{align}
  \label{propo:alpha_homogeneous_eq_1}    
    C_1 \norm[\alpha-1]{x} \leq \norm{\nabla U_1(x)} &\leq C_2 \norm[\alpha-1]{x} \\
      \label{propo:alpha_homogeneous_eq_2}    
    \norm{\nabla^2 U_1(x)} &\leq C_2 \norm[\alpha-2]{x}  \eqsp.
  \end{align}
  \eqref{propo:alpha_homogeneous_eq_1}  is just a consequence of \cite[Lemma 4.5]{jarner:hansen:2000}. As for   \eqref{propo:alpha_homogeneous_eq_2},  we have by \eqref{propo:alpha_homogeneous_eq_0}  for all $x \in \rset^d$, $\norm{x} \geq 1$,
  \begin{align*}
    & \nabla U_1(x) = \alpha \norm[\alpha-2]{x} x U_1(x/\norm{x}) + \norm[\alpha]{x}\defEns{\Id-x x^{\transpose}/\norm[2]{x}} \nabla U_1(x/\norm{x})\\
&    \nabla U_2(x) =  \alpha \defEns{\norm[\alpha-2]{x} + (\alpha-2) \norm[\alpha-4]x x^{\transpose}} U_1(x/\norm{x}) \\
    &   \qquad \qquad + \alpha \norm[\alpha-2]{x} \Big[\defEns{\Id-x x^{\transpose}/\norm[2]{x}} \nabla U_1(x/\norm{x}) x^{\transpose} \\
    & \qquad \qquad \qquad \qquad \qquad\qquad \qquad+ x  \nabla U_1(x/\norm{x})^{\transpose} (x)\defEns{\Id-x x^{\transpose}/\norm[2]{x}} \Big]\\
                  & \qquad +  \norm[\alpha-2]{x} \defEns{\nabla U_1(x/\norm{x}) x^{\transpose} + x  \nabla U_1(x/\norm{x})^{\transpose}  +2 \nabla U_1(x/\norm{x})^{\transpose}x x x^{\transpose}} \\
    & \phantom{   \nabla U_2(x) =} \qquad + \norm[\alpha]{x}\defEns{\Id-x x^{\transpose}/\norm[2]{x}} \nabla^2 U_1(x/\norm{x})  \eqsp.
  \end{align*}
  Since the $U_1$ is assumed to be twice continuously differentiable, the proof is finished.

\subsection*{Proof of \Cref{thm:ThmRecuit}}

\label{sec:proof-simulated}
The semi-group $(P_{s,t})_{t \geq s \geq 0 }$ is associated with the family of generator $(\generator_{\beta(t)})_{t \geq 0}$ where for any $\beta >0$, $\generator_{\beta}$ is defined  for any  $f\in\mrc^1(\rset^d\times\msy)$ by
\begin{align}
\nonumber
&  \generator_{\beta} f(x,y) = \ps{y}{\nabla f(x,y)} + \beta (\ps{y}{\nabla U(x)})_+ \defEns{f(x,\Refl(x,y)) - f(x,y)} \\
    \label{eq:generator-recuit}
& \qquad   + \rate \defEns{\int_{\msy} f(x,w) \rmd \loiy(w)  - f(x,y)} \eqsp.
\end{align}
First, we establish a Foster-Lyapunov drift condition for $\generator_{\beta}$ uniformly on $\beta\geqslant1$.
\begin{lemma}
\label{lem:lyap-recuit}
Assume \Cref{ass:recuit}. There exist $A_1,A_2,A_3>0$, $\beta_*\geqslant 1$ and $V_1,V_2\in \mrc^1(\rset^d\times\msy)$, with $V_i \exp(-U/2)$ bounded above and below by positive constants for $i=1,2$, such that for all $\beta\geqslant \beta_*$,
\begin{equation*}
\generator_{\beta} V_1  \leqslant  A_1( A_2 - V_1) \eqsp,
\end{equation*}
and for all $\beta \geqslant 1$,
\begin{equation*}
\generator_{\beta} V_2  \leqslant  A_3  V_2  \eqsp.
\end{equation*}
\end{lemma}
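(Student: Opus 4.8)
The plan is to deduce both inequalities from the general drift estimate of \Cref{lem:lyapunov}, exploiting that $\generator_{\beta}$ is \emph{monotone in $\beta$} when evaluated on the relevant Lyapunov functions. First I would record this monotonicity: if $\bU$ is an increasing function of $U$, then $\nabla\bU$ and $\nabla U$ are positively proportional, so the reflection $\Refl(x,y)$ flips the sign of $\ps{y}{\nabla\bU(x)}$ (hence of the argument of $\varphi$) while preserving $\norm{y}$; consequently, for any $f$ of the form \eqref{eq:def_lyap_gene}, $f(x,\Refl(x,y))-f(x,y)\leq 0$ on $\{\ps{y}{\nabla U(x)}>0\}$. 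Since the transport and refreshment parts of $\generator_{\beta}f$ do not involve $\beta$ and the bounce part is $\beta\,(\ps{y}{\nabla U(x)})_+\{f(x,\Refl(x,y))-f(x,y)\}$, it follows that $\generator_{\beta}f\leq\generator_{\beta'}f$ pointwise whenever $\beta\geq\beta'$.

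For $V_1$: observe that, on $\mrc^1$ functions, $\generator_{\beta}$ coincides with the extended generator \eqref{eq:extended_generator_expression} of the (non-annealed) BPS associated with the potential $\beta U$ and refreshment rate $\rate$ (the reflection is unchanged since $\rmn(\nabla(\beta U))=\rmn(\nabla U)$). I would apply \Cref{lem:lyapunov} to this generator for the potential $\beta_* U$, with $\beta_*$ to be fixed large, choosing $\psi$ affine so that $\bU=\psi\circ(\beta_* U)$ is a fixed positive affine function of $U$ with $\bU\asymp U/2$, $\ell\equiv 1$, and $\poty(t)=t^2$. Under \Cref{ass:recuit} the hypotheses of \Cref{ass:geo_erg_general} are routine to verify: the conditions on $\loiy$ hold because $\msy$ is bounded; \eqref{ass:geo_erg_general_eq_1}--\eqref{ass:geo_erg_general_eq_2} because $U\to\infty$, $\int\rme^{-U/2}<\infty$ and $\sup\norm{\nabla^2U}<\infty$; \eqref{ass:geo_erg_general_eq_3} with $c_1$ proportional to $\liminf\norm{\nabla U}>0$, $c_2=1$ and, since $\norm{\nabla(\beta_*U)}/\norm{\nabla\bU}$ is the \emph{constant} $2\beta_*$, with $c_3=2\beta_*$; \eqref{ass:geo_erg_general_eq_5}--\eqref{ass:geo_erg_general_eq_6} because $\nabla\ell\equiv0$ and $\sup_{y\in\msa_x}\norm{y}\leq M$. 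The crucial point is that the left-hand side of \eqref{eq:condition_geo_4} is independent of $\beta_*$, while its right-hand side contains the factor $\{c_3/(4c_2)\}\wedge\{\rate\delta c_3/(100rc_1)\}^{1/2}$, which tends to $+\infty$ as $c_3=2\beta_*\to\infty$; hence \eqref{eq:condition_geo_4} holds once $\beta_*$ is large enough, and one checks from \eqref{eq:def_a} and \eqref{eq:def_kappa} that then $\kappa=1$. \Cref{lem:lyapunov} therefore yields $A_1,A_2>0$ and a function $V_1(x,y)=\rme^{\kappa\bU(x)}\varphi_0(\dots)+\rme^{\poty(\norm{y})}$ with $\beta$-independent parameters, satisfying that $V_1\rme^{-U/2}$ is bounded above and below by positive constants and $\generator_{\beta_*}V_1\leq A_1(A_2-V_1)$; by the monotonicity above, $\generator_{\beta}V_1\leq\generator_{\beta_*}V_1\leq A_1(A_2-V_1)$ for all $\beta\geq\beta_*$.

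For $V_2$: I would take $V_2(x,y)=\rme^{U(x)/2}\varphi_2(\ps{y}{\nabla U(x)})+\rme^{\norm{y}^2}$, with $\varphi_2$ a bump function as in \eqref{eq:def_varphi_1} but of amplitude $c>1$ (for instance $a=1$, $b=2$, $c=3$); again $V_2\rme^{-U/2}$ is bounded above and below by positive constants and $V_2\in\mrc^1$. By the monotonicity of $\generator_{\beta}$ one has $\generator_{\beta}V_2\leq\generator_1V_2$ for all $\beta\geq1$, so it suffices to bound $\generator_1V_2$. A direct computation shows that the bracket $J_1$ in $\generator_1V_2=\rme^{U/2}J_1+\rate\{\int_{\msy}\rme^{\norm{w}^2}\loiy(\rmd w)-\rme^{\norm{y}^2}\}$ is bounded above: for $\theta:=\ps{y}{\nabla U(x)}>0$ the transport and bounce terms combine into $\theta\{\varphi_2(-\theta)-\tfrac12\varphi_2(\theta)\}$, which is $\leq\tfrac{1-c}{2}\theta$ and hence tends to $-\infty$ as $\theta\to+\infty$ (since $c>1$) and stays bounded on bounded sets; for $\theta\leq0$ the transport term is nonpositive and the bounce term vanishes; the $\nabla^2U$ term is bounded since $\sup\norm{\nabla^2U}<\infty$, $\varphi_2'$ is bounded with compact support and $\norm{y}\leq M$; and the refreshment term is bounded by $\rate(1+c)$. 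With the bounded contribution of $\rme^{\norm{y}^2}$ and the estimate $V_2\geq\rme^{U/2}+1$, this gives $\generator_1V_2\leq A_3V_2$ for a suitable $A_3$.

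The main obstacle is the uniformity in $\beta$: \Cref{lem:lyapunov} applied to $\beta U$ would only produce a $\beta$-dependent Lyapunov function — in particular a $\beta$-dependent exponential weight $\rme^{\kappa\bU}$ — whereas a single pair $V_1,V_2$ is required. This is resolved by combining the monotonicity $\generator_{\beta}f\leq\generator_{\beta'}f$ for $\beta\geq\beta'$, which reduces the range $\beta\geq\beta_*$ to $\beta=\beta_*$ and the range $\beta\geq1$ to $\beta=1$, with the observation that, after rescaling $\psi$ so that $\bU$ is a fixed affine function of $U$, the constant $c_3$ grows linearly in $\beta_*$, which both makes \eqref{eq:condition_geo_4} hold for $\beta_*$ large and forces $\kappa=1$, pinning the exponential weight to $\rme^{U/2}$ independently of $\beta_*$.
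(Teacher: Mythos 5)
Your proposal is correct and follows essentially the same route as the paper: for $V_1$, verify \Cref{ass:geo_erg_general} for the scaled potential with $\bU$ proportional to $U/2$, $\ell\equiv1$, $\poty(t)=t^2$, note that $c_3\propto\beta_*$ makes \eqref{eq:condition_geo_4} hold and forces $\kappa=1$ for $\beta_*$ large, apply \Cref{lem:lyapunov} at $\beta_*$, and extend to $\beta\geq\beta_*$ via the sign of the bounce term (the paper computes $(\generator_\beta-\generator_{\beta_*})V_1\leq0$ exactly as in your monotonicity remark); for $V_2$ the paper likewise takes $\rme^{U/2}\varphi_2(\ps{y}{\nabla U})$ and bounds the generator directly. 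The only blemish is your intermediate claim that $\theta\{\varphi_2(-\theta)-\tfrac12\varphi_2(\theta)\}\leq\tfrac{1-c}{2}\theta$ for all $\theta>0$ (false for small $\theta$, where the bracket is positive), but since only an upper bound uniform in $\theta$ is needed and the bracket is bounded with the negative linear behaviour for large $\theta$, the conclusion $\generator_\beta V_2\leq A_3V_2$ stands.
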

\begin{proof}
  We check that \Cref{ass:geo_erg_general}  holds for $\beta$ large enough, with $\bU = U/2$ and the potential  $x \mapsto  U_{\beta}(x)$. Indeed, set  $\ell(x) = 1$ for all $x\in\rset^d$ and $H(t) = t^2$ for $t\in\rset$. Then all the conditions of \Cref{ass:geo_erg_general}  are clearly satisfied, with $c_1$, $c_2$ and $c_4$ which does not depend on $\beta$, and $c_3 = \beta$. Let $\beta_*$ be large enough so that \eqref{eq:condition_geo_4} holds for $\beta \geqslant \beta_*$ and $\kappa$ defined in \eqref{eq:def_kappa} is equal to $1$.

Let $\lyap_1$ be the function defined by \eqref{eq:def_lyap_gene}. According to \Cref     {lem:lyapunov}, there exist $A_1,A_2>0$ such that
     \begin{equation*}
\generator_{\beta_*} V_1  \leqslant  A_1( A_2 - V_1).
\end{equation*}
Now, for $\beta \geqslant \beta_*$, keeping the notations of \Cref{sec:Lyapunov},
\[( \generator_{\beta} - \generator_{\beta_*}) V_1(x,y)  \ = \ (\beta-\beta_*)e^{ U(x)/2}\ps{y}{\na U(x)}_+( \varphi(-\theta) - \varphi(\theta) ) \ \leqslant \ 0\eqsp.\]

Second, set for any $(x,y)\in \rset^d\times \msy$, $V_2(x,y) = \exp(U(x) /2)   \varphi_2( \ps{y}{ \na U(x)}  )$, where $\varphi_2\in\mrc^1(\rset)$ is an increasing function such that $\varphi(s) = 1$ for $s\leqslant -1$ and $\varphi(s)=3$ for $s\geqslant 1$. Then, for all $\beta\geqslant 1$, 
\begin{align*}
e^{-  U(x)/2}\generator_{\beta} V_2(x,y) & \leqslant    \ps{y}{ \na U(x)} \varphi_2( \ps{y}{ \na U(x)}  ) + M^2 \norm{\na^2 U}_\infty  \norm{\varphi_2'}_\infty + 2\rate \\
&  + \beta \ps{y}{\na U(x)}_+\defEns{ \varphi( - \ps{y}{ \na U(x)}) - \varphi( \ps{y}{ \na U(x)} ) } \\
& \leqslant  3 + M^2 \norm{\na^2 U}_\infty  \norm{\varphi_2'}_\infty + 2\rate\eqsp,
\end{align*}
and we conclude by noting that $\exp(U(x)/2)\leqslant V_2(x,y)$ for any $(x,y) \in \rset^d \times \msy$.
\end{proof}

\begin{corollary}
\label{corollary-recuit}
Assume that the assumptions of \Cref{thm:ThmRecuit} hold. Then there exists $A_4>0$ such that for all $t,s\geqslant 0$ and $(x,y) \in \rset\times \msy$,
\begin{equation*}
P_{t,t+s} V_1(x,y)  \leqslant  A_4 e^{A_3 s}V_1(x,y)\eqsp,
\end{equation*}
and for all $t\geqslant 0$ such that $\beta(t)\geqslant \beta_*$,
\begin{equation*}
P_{t,t+s} V_1(x,y)  \leqslant  e^{-A_1 s} V_1(x,y) + ( 1 - e^{-A_1 s} ) A_2\eqsp,
\end{equation*}
where $V_1 \in \rmc^1(\rset^d \times \msy)$, $A_1,A_2,A_3$ are given by \Cref{lem:lyap-recuit}. 
\end{corollary}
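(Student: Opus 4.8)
The plan is to mimic the proof of \Cref{Corollary-lyap}, integrating along the time-inhomogeneous semi-group the two drift inequalities of \Cref{lem:lyap-recuit}; the only new ingredients are the monotonicity of $\beta$ supplied by \Cref{ass:recuit-beta} and the comparability of $V_1$ and $V_2$. First I would fix $(x,y)\in\rset^d\times\msy$ and $t\geq 0$, and let $(X_u,Y_u)_{u\geq 0}$ be the annealed BPS process started from $(x,y)$ with cooling schedule $u\mapsto\beta(t+u)$, so that $P_{t,t+s}V_i(x,y)=\expe{V_i(X_s,Y_s)}$ for $i=1,2$. Under \Cref{ass:recuit} one has $\msy=\ball{0}{M}$, hence $\norm{Y_u}\leq M$ and $\norm{X_u}\leq\norm{x}+uM$ for all $u$; in particular the process does not explode (cf. \cite[\Cref{prop:non-explosion_BPS}]{DurmusGuillinMonmarche:toolbox}) and the stopping times $\tau_n=\inf\{u\geq 0:\norm{X_u}\geq n\}$ increase to $+\infty$ almost surely. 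Since $V_1,V_2\in\mrc(\rset^d\times\msy)$ are $\mrc^1$ in the $x$-variable and $\sup_{w\in\msy}\abs{V_i(x,w)}<\infty$ for each $x$ (as $\msy$ is bounded), the domain description recalled in \Cref{sec:Generator}, applied with the potential $\beta U$, gives $V_i\in\domain(\generator_\beta)$ for every $\beta\geq 1$.

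Next I would handle the growth bound. As $\beta$ is non-decreasing and $\beta(0)\geq 1$, we have $\beta(t+v)\geq 1$ for all $v\geq 0$, so \Cref{lem:lyap-recuit} yields $\generator_{\beta(t+v)}V_2\leq A_3V_2$ along the whole trajectory. Following \cite[Section~31.5]{davis:1993} (applied to the space--time process, to accommodate the time-dependence), the process
\[
u \mapsto \rme^{-A_3u}V_2(X_u,Y_u)-V_2(x,y)-\int_0^u\rme^{-A_3v}\parenthese{\generator_{\beta(t+v)}V_2-A_3V_2}(X_v,Y_v)\,\rmd v
\]
is a local martingale; localizing at $\tau_n$, using that the integrand is nonpositive, and letting $n\to\infty$ (Fatou, $V_2$ being bounded below by a positive constant) gives $P_{t,t+s}V_2(x,y)\leq\rme^{A_3s}V_2(x,y)$ for all $s\geq 0$. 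By \Cref{lem:lyap-recuit} there are $0<c\leq C$ with $c\,\rme^{U/2}\leq V_i\leq C\,\rme^{U/2}$ for $i=1,2$, hence $V_1\leq (C/c)V_2$ and $V_2\leq (C/c)V_1$; composing the three bounds yields $P_{t,t+s}V_1(x,y)\leq (C/c)^2\rme^{A_3s}V_1(x,y)$, which is the first assertion with $A_4=(C/c)^2$.

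For the decay bound I would proceed in the same way, but now assuming $\beta(t)\geq\beta_*$: monotonicity of $\beta$ then gives $\beta(t+v)\geq\beta_*$ for all $v\geq 0$, so \Cref{lem:lyap-recuit} provides $\generator_{\beta(t+v)}V_1\leq A_1(A_2-V_1)$ along the trajectory. Running the same Dynkin and localization argument with $V_1$ in place of $V_2$ and the weight $\rme^{A_1u}$, the drift becomes $A_1V_1+\generator_{\beta(t+v)}V_1\leq A_1A_2$, and one obtains $\rme^{A_1s}\expe{V_1(X_s,Y_s)}\leq V_1(x,y)+A_2(\rme^{A_1s}-1)$, that is $P_{t,t+s}V_1(x,y)\leq\rme^{-A_1s}V_1(x,y)+(1-\rme^{-A_1s})A_2$, as announced.

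The only slightly delicate points are the local martingale property in the non-homogeneous setting---handled by the standard device of adjoining time as a deterministic coordinate before invoking \cite[Section~31.5]{davis:1993}---and the exchange of limit and expectation, legitimate because $\tau_n\uparrow+\infty$ almost surely and $V_1,V_2$ are bounded below by positive constants. I do not anticipate any genuine obstacle here: the argument is an essentially mechanical adaptation of \Cref{Corollary-lyap}, the role of \Cref{ass:recuit-beta} being merely to guarantee that the relevant drift inequality is valid at every time $t+v$, $v\geq 0$.
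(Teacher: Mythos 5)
Your proposal is correct and follows essentially the same route as the paper: the paper's proof is precisely the argument of \Cref{Corollary-lyap} applied with the two drift inequalities of \Cref{lem:lyap-recuit} (the $V_2$ bound for all $\beta\geq 1$, the $V_1$ bound once $\beta(t)\geq\beta_*$ together with monotonicity of $\beta$), combined with the fact that $V_1/V_2$ is bounded above and below by positive constants. Your explicit treatment of the localization, the Fatou step, and the time-inhomogeneity (via the space--time process) just spells out details the paper leaves implicit.
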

\begin{proof}
The proof follows the same line as  the proof of \Cref{Corollary-lyap}, using \Cref{lem:lyap-recuit} and $V_1/V_2$ is bounded above and below by positive constants.
\end{proof}

\begin{lemma}
\label{lem:couplage-recuit}
Assume that the assumptions of \Cref{thm:ThmRecuit} hold. Then, for all compact set $\mathsf K$ of $\R^{d}\times \msy$, there exist $s_1,\chi,A_5>0$  which depend on $\mathsf K$, $\loiy$, $\rate$ and $U$ but not on $t \mapsto \beta(t)$, such that for all  $(x,y),(\tx,\ty)\in\mathsf K$, all $t\geqslant 0$ and all $s\geqslant s_1$, 
\begin{equation*}
 \tvnorm{ P_{t,t+s}((x,y),\cdot) -  P_{t,t+s} ((\tx,\ty),\cdot)} \leq 2\parentheseDeux{1 - \chi \exp \parenthese{ - A_5 \int_{t}^{t+s} \beta(u) \rmd u }} \eqsp. 
\end{equation*}
\end{lemma}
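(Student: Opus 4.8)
The plan is to establish this as the time-inhomogeneous counterpart of \Cref{lem:condition-doeblin-precis}, reusing the coupling of \Cref{LemCouplageCompact} and \Cref{lem:condition-doeblin-precis}. The structural fact that makes this possible is that, in the annealed BPS, only the \emph{bounce} intensity $\beta(t)\ps{y}{\nabla U(x)}_+$ depends on the cooling schedule, whereas the \emph{refreshment} mechanism still occurs at the constant rate $\rate$ with the fixed distribution $\loiy$. Since the coupling that merges two trajectories is built entirely from the refreshment events, it is insensitive to $\beta$; the schedule enters only through the probability that the coupling attempt is not spoiled by a bounce.

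First I would treat the BPS with zero potential, $U\equiv 0$. Started from $(x,y),(\tx,\ty)\in\mathsf{K}\subset\ball{0}{R_{\mathsf{K}}}\times\msy$, these processes have only refreshment jumps at rate $\rate$, and I would couple them as in the proof of \Cref{lem:condition-doeblin-precis}: via \Cref{LemMiroir} when $\loiy$ is Gaussian, and by the argument of \cite[Theorem 5.1]{MonmarcheRTP} in general. They share the first two refreshment times; at the first one the velocities are chosen by a reflection coupling so that, in the absence of bounces, the two positions coincide at the second refreshment; from the second refreshment onward both take the same refreshed velocity, hence stay equal forever. Using that under \Cref{ass:recuit} every velocity has norm at most $M$, one obtains $s_1>0$ and $\alpha\in(0,1)$, depending only on $\mathsf{K},\loiy,\rate$, such that for all $s\geqslant s_1$ this $U\equiv 0$ coupling merges before time $s_1$ with probability at least $\alpha$, the two trajectories staying in $\ball{0}{R}\times\msy$ with $R=R_{\mathsf{K}}+Ms_1$.

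Next I would reinstate the bounces by thinning, exactly as in the sketch of \Cref{lem:condition-doeblin-precis}: in the true process with potential $\beta(\cdot)U$ started at time $t$, bounces occur along a Poisson clock whose intensity, along any trajectory confined to $\ball{0}{R}$, is bounded by $\beta(t+u)M\norm{\nabla U}_{\infty,\ball{0}{R}}$ at time $t+u$. I would declare the attempt successful iff the underlying $U\equiv 0$ construction merges before time $s_1$ \emph{and} no bounce occurs, for either process, in $[t,t+s_1]$; since $[t,t+s_1]\subset[t,t+s]$ and $\beta\geqslant 0$, the conditional probability of the latter event given the refreshment data is at least $\exp\parenthese{-A_5\int_{t}^{t+s}\beta(u)\,\rmd u}$ with $A_5$ a constant depending on $M$ and $\norm{\nabla U}_{\infty,\ball{0}{R}}$ only. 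Because the $U\equiv 0$ merge is built from data independent of the bounce clock, this yields $\proba{(X_{t+s},Y_{t+s})=(\tX_{t+s},\tY_{t+s})}\geqslant \alpha\exp\parenthese{-A_5\int_{t}^{t+s}\beta(u)\,\rmd u}$; setting $\chi=\alpha$ and using the coupling characterisation $\tvnorm{P_{t,t+s}((x,y),\cdot)-P_{t,t+s}((\tx,\ty),\cdot)}\leqslant 2\proba{(X_{t+s},Y_{t+s})\neq(\tX_{t+s},\tY_{t+s})}$ concludes. All three constants $s_1,\chi,A_5$ depend on $\mathsf{K},\loiy,\rate$ and $U$, but not on $\beta$.

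The main obstacle is bookkeeping rather than a new idea: one must verify that $s_1$ and $\alpha$ can be chosen uniformly in $t\geqslant 0$ and in $s\geqslant s_1$, and — crucially — independently of the cooling schedule, which works precisely because the refreshment dynamics are autonomous; and that the reflection coupling of the velocities at the first refreshment succeeds with probability bounded below uniformly over configurations in $\mathsf{K}$, which requires the random gap between the first two refreshments to exceed a fixed positive threshold, an event of fixed positive probability that is absorbed into $\alpha$. One should also keep explicit track of the confining ball $\ball{0}{R}$ so that $\norm{\nabla U}_{\infty,\ball{0}{R}}$ is the correct bound on the bounce rate along the attempt.
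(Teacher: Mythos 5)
Your proposal follows essentially the same route as the paper's own (sketched) proof: first merge the two trajectories using only the refreshment mechanism in the zero-potential case, obtaining $s_1$, a confining compact set, and a success probability $\chi>0$ independent of $\beta$, and then reinstate the bounces by thinning, declaring success when no bounce occurs before $s_1$, an event whose conditional probability is bounded below by $\exp\parenthese{-A_5\int_t^{t+s}\beta(u)\,\rmd u}$ with $A_5$ determined by $M$ and $\norm{\nabla U}_{\infty}$ on the confining set. The argument and the bookkeeping (uniformity over $\mathsf K$, independence from the cooling schedule, the coupling characterisation of total variation) match the paper's proof, so no further comment is needed.
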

\begin{proof}
The arguments are exactly those of the proof of \Cref{lem:condition-doeblin-precis}, hence of \cite[Theorem 5.1]{MonmarcheRTP}, so that we only give a sketch of proof. First, considering the case $\beta=0$, we have already shown in \Cref{sec:couplage} that, starting from two different points in a given compact $\mathsf K$, it is possible to merge two processes in a time $s_1>0$ while staying in a compact $\mathsf K'$, with some probability $\chi>0$. Call $\mathsf E$ this event. Then, considering the case $\beta>0$, we follow the same coupling up to the first bounce time. The processes have merged if this first bounce happens after time $s_1$, which occurs with probability 
\begin{multline*}
\probaCond{ \int_t^{t+s_1} \beta(u) \ps{Y^{(\beta)}_{u}}{\nabla U (X^{(\beta)}_{u})}_+  \rmd u \geqslant E^{2}_{1} } {\mse}  \\ \geq  \exp \parenthese{ - M\norm{\nabla U}_{\infty, K'}\int_{t}^{t+s_1} \beta(u) \rmd u }\eqsp,
\end{multline*}
where $M = \sup_{(w,z) \in \msk'} \norm{z}$.
\end{proof}

Let $V_1$ and $A_i$, $i\in \{1,\ldots,4\}$, be given by Corollary \ref{corollary-recuit}. Then, let $s_1,\chi,A_5>0$ be given by \Cref{lem:couplage-recuit}, with $\mathsf K = \{(x,y)\in\rset^d\times\msy,\ V_1(x) \leqslant 2 A_2\}$. Let
\begin{equation}
  \label{eq:def_t_0}
  t_0 = \inf\{t\geqslant 0,\ \beta(t) \geqslant \beta_* \} \eqsp,
\end{equation}
and for $t\geqslant t_0$, define
\begin{equation}
  \label{eq:def_bfn}
 \bfn(t)= \lfloor (t-t_0)/s_1\rfloor\eqsp.
\end{equation}
Consider the following decomposition,
\begin{equation*}
P_{0,t}   = P_{0,t-\bfn(t) s_1}Q_0Q_1   \cdots  Q_{\bfn(t)-1} Q_{\bfn(t)} \eqsp, 
\end{equation*}
where $Q_0$ is the identity kernel and for $k\in \{ 1 ,\ldots, n(t)\}$, we set
\begin{equation}
  \label{eq:def_q_k}
  Q_k = P_{t- (\bfn(t)-k+1)s_1,t-(\bfn(t)-k)s_1} \eqsp.
\end{equation}
For any measurable function $\varphi : \rset^d \times \msy \to \rset$ and $\zeta \geqslant 0$, we set
\begin{equation*}
\|\varphi\|_{\zeta,V_1}  =\underset{(x,y)\in \rset^d \times \msy} \sup  \defEns  { \frac{|\varphi(x,y)|}{1+\zeta V_1(x)}} \eqsp,
\end{equation*}
and consider the weighted $V_1$-norm on $\mathcal P_{V_1}(\rset^d \times \msy)=\{\mu\in\mathcal P(\rset^d \times \msy) \, :\,  \mu(V_1) < \infty\}$, defined for $\mu_1, \mu_2 \in \mcp_{V_1}(\rset^d \times \msy)$ by
\begin{equation}
  \label{eq:def_rho_zeta}
\rho_\zeta(\mu_1,\mu_2)  =  \sup\left\{ \mu_1( \varphi) - \mu_2( \varphi) \, : \,  \|\varphi\|_{\zeta,V_1} \leqslant 1\right\}.
\end{equation}
Note that $\rho_\zeta(\mu_1,\mu_2) $ increases with $\zeta$ and that $\rho_0 = \tvnorm{\cdot}$. In addition, for any $\mu_1,\mu_2 \in  \mcp_{V_1}(\rset^d \times \msy)$,
\begin{equation*}
  \rho_\zeta(\mu_1,\mu_2) \leq \Vnorm[V_1]{\mu_1-\mu_2} \leq (1+\zeta)^{-1} \rho_\zeta(\mu_1,\mu_2) \eqsp.
\end{equation*}
\begin{lemma}
  \label{lem:convergence}
  Assume that the conditions of \Cref{thm:ThmRecuit} hold. Then for all $\nu_1,\nu_2\in\mcp_{V_1}(\R^{d} \times \msy)$, $t \geq t_0$ and  all $k\in \{1,\ldots,\bfn(t)\} $,
\begin{equation}\label{eq:recuit-contraction}
\rho_{\epsilon_k}( \nu_1 Q_k,\nu_2 Q_k )  \leqslant  \kappa_k  \rho_{\epsilon_k}( \nu_1 ,\nu_2 ) \eqsp,
\end{equation}
where 
\begin{align*}
   \epsilon_k  &=  \frac{  \chi  }{(1-\gamma)A_2}   \exp \parenthese{ - A_5 \int_{t-(n-k+1)s_1}^{t-(n-k)s_1} \beta_u \rmd u } \eqsp,\\
\kappa_k & =  1 - \parenthese{ \frac{\chi}2 \wedge \frac{1-\gamma}{4}} \exp \parenthese{ - A_5 \int_{t-(n-k+1)s_1}^{t-(n-k)s_1} \beta_u \rmd u }\eqsp,  \gamma &= \exp(-s_1 A_1) \eqsp.
\end{align*}
\end{lemma}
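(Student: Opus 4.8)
The plan is to deduce \Cref{lem:convergence} from the two estimates established just above it --- the uniform drift of \Cref{corollary-recuit} and the quantitative minorization of \Cref{lem:couplage-recuit} --- via the weighted-norm form of Harris' ergodic theorem \cite[Theorem 1.2]{HairerMattingly2011}, exactly as in the proof of the quantitative counterpart of \Cref{theo:geo_ergo_gene}. By duality, $\rho_\zeta(\mu_1,\mu_2) = \int (1+\zeta V_1)\,\rmd |\mu_1-\mu_2|$, so $\rho_\zeta$ is precisely the weighted norm appearing in \cite{HairerMattingly2011}; hence a one-step contraction in $\rho_{\epsilon_k}$ for the single kernel $Q_k$ is exactly what that theorem delivers, once $Q_k$ is shown to satisfy a geometric drift together with a one-step minorization on a sub-level set of $V_1$.

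First I would fix $t \geq t_0$ and $k \in \{1,\ldots,\bfn(t)\}$ and note that the kernel $Q_k$ of \eqref{eq:def_q_k}, which is the annealed transition over $I_k := [\,t-(\bfn(t)-k+1)s_1,\ t-(\bfn(t)-k)s_1\,]$, runs under a cooling schedule that already exceeds $\beta_*$: since $k \geq 1$ and $\bfn(t) = \floor{(t-t_0)/s_1}$ by \eqref{eq:def_bfn}, the left endpoint of $I_k$ is at least $t-\bfn(t)s_1 \geq t_0$, so $\beta(u) \geq \beta_*$ for every $u \in I_k$ by \eqref{eq:def_t_0}. Then \Cref{corollary-recuit} gives the geometric drift $Q_k V_1 \leq \gamma V_1 + (1-\gamma)A_2$ with $\gamma = \rme^{-A_1 s_1} \in \ooint{0,1}$, while \Cref{lem:couplage-recuit}, applied on the compact sub-level set $\msk_0 = \{(x,y) \in \rset^d \times \msy : V_1(x) \leq 4 A_2\}$, gives $\tvnorm{Q_k((x,y),\cdot) - Q_k((\tx,\ty),\cdot)} \leq 2(1-\alpha_k)$ for all $(x,y),(\tx,\ty)\in\msk_0$, with $\alpha_k = \chi\exp(-A_5\int_{I_k}\beta(u)\,\rmd u)$. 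These are exactly the hypotheses of \cite[Theorem 1.2]{HairerMattingly2011} --- the level $4A_2$ exceeding the threshold $2A_2$ required there for the offset $(1-\gamma)A_2$ --- and that theorem produces a weight $\epsilon_k > 0$ and a rate $\kappa_k \in \ooint{0,1}$ with $\rho_{\epsilon_k}(\nu_1 Q_k, \nu_2 Q_k) \leq \kappa_k \rho_{\epsilon_k}(\nu_1,\nu_2)$, that is, \eqref{eq:recuit-contraction}. The last step would be to re-run the (explicit) argument of that theorem with the standard auxiliary choices --- $\alpha_0$ a fixed fraction of $\alpha_k$ and $\gamma_0 = (3+\gamma)/4$ --- so as to read off the stated values $\epsilon_k = \chi\{(1-\gamma)A_2\}^{-1}\exp(-A_5\int_{I_k}\beta)$ and $\kappa_k = 1 - \{(\chi/2)\wedge((1-\gamma)/4)\}\exp(-A_5\int_{I_k}\beta)$.

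The genuinely new content is already packaged in \Cref{corollary-recuit} and \Cref{lem:couplage-recuit}, so what remains is bookkeeping rather than a real obstacle; the two points requiring care are (i) choosing the level of $\msk_0$ strictly above the Harris threshold $2A_2$ --- enlarging it merely changes the constant $A_5$ in \Cref{lem:couplage-recuit}, harmlessly --- and choosing $s_1$ large enough for \Cref{lem:couplage-recuit} to be applicable with $s=s_1$; and (ii) keeping the explicit dependence of $\epsilon_k$ and $\kappa_k$ on $k$. Indeed $\alpha_k$, hence $\epsilon_k$, degrades as $k$ grows --- a later refreshment attempt is more likely to be interrupted by a bounce once $\beta$ is large --- so \eqref{eq:recuit-contraction} is a contraction in a $k$-dependent norm; reconciling these norms along the composition $P_{0,t} = P_{0,\,t-\bfn(t)s_1}\,Q_0 Q_1 \cdots Q_{\bfn(t)}$ is deferred to the remainder of the annealing argument and is not needed for \Cref{lem:convergence} itself.
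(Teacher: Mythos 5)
Your proposal is correct and follows essentially the same route as the paper: the paper's proof simply applies \Cref{thm:ThmCVexpo} (its Appendix~B quantitative version of \cite[Theorem 1.2]{HairerMattingly2011}) to each $Q_k$, using the drift from \Cref{corollary-recuit} (valid since $\beta\geq\beta_*$ on the relevant interval, as you check) and the minorization from \Cref{lem:couplage-recuit} on a sub-level set of $V_1$, and then reads off $\epsilon_k$ and $\kappa_k$ from the explicit constants with $C_1=A_2$, $\gamma=\rme^{-A_1 s_1}$ and $\alpha_k=\chi\exp(-A_5\int_{I_k}\beta)$. The only difference is cosmetic: instead of re-running the Hairer--Mattingly argument you can cite \Cref{thm:ThmCVexpo} directly (with $C_2=4C_1$), and your choice of the sub-level set at level $4A_2$ rather than $2A_2$ is a harmless (indeed slightly more careful) variant of the paper's bookkeeping.
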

\begin{proof}
  It is a direct application  to $Q_k$ for all $k$ of   \Cref{thm:ThmCVexpo} based on \Cref{lem:couplage-recuit} and  \Cref{corollary-recuit}.
\end{proof}
 
For a fixed $\beta\geqslant 0$,  let $(P_t^{(\beta)})_{t\geq 0}$ be the semi-group of the BPS sampler associated with  the potential $x \mapsto \beta U(x)$ and, for $t\geqslant t_0$ and  $k\in\{0,\ldots,\bfn(t)\}$, let
\begin{equation}
  \label{eq:def_Q_'}
  Q'_k = P_{s_1}^{(\beta_{k})} \eqsp,
\end{equation}
where for ease of notation simplicity we denote
\begin{equation}
  \label{eq:def_beta_k}
  \beta_k = \beta_{t-(\bfn(t)-k)s_1} \eqsp.
\end{equation}
 In other words, $Q'_k$ is similar to $Q_k$ except that the inverse temperature is frozen. Let $\tpi_k$ be the invariant measure of $Q'_k$, namely 
\[\tpi_k  =   \pi_k\otimes \loiy\eqsp,\]
where $\pi_k$ admits a density with respect to the Lebesgue measure given for any $x \in \rset^d$ by
\[\pi_k( x) =  \rmZ_k^{-1} \exp (-\beta_{k} U(x)) \rmd x  \eqsp, \qquad \rmZ_k = \int_{\rset^d}\exp (-\beta_{k} U(\tx)) \rmd \tx\eqsp.\]

We know that the mass of $\pi_k$ concentrates, as $k\rightarrow \infty$, around the vicinity of the global minima of $U$. To get the same with $ P_{0,t}((x,y),\cdot)$, we need to show that $\tvnorm{\tpi_{\bfn(t) }- P_{0,t}((x,y),\cdot)}$ vanishes as $t\rightarrow\infty$. Denoting, for $t\geqslant t_0$ and $k\in\{0,\ldots,\bfn(t)\}$, $\nu_{k} = \updelta_{(x,y)} P_{0,t-\bfn(t)s_1}Q_0Q_1   \cdots  Q_{k-1} Q_{k}$, where $Q_k$ is defined in \eqref{eq:def_q_k}, it is then natural to study
\begin{equation}
  \label{eq:def_an}
  u_k = \rho_{\epsilon_k}( \nu_k,\tpi_k)\eqsp.
\end{equation}
From \eqref{eq:recuit-contraction}, for any $t \geq t_0$, $k \in \{1,\ldots,\bfn(t)\}$
\begin{eqnarray}\label{eq:recuit-an-en}
u_k  \leqslant  \kappa_k  \rho_{\epsilon_k}( \nu_{k-1},\tpi_{k-1} ) + \rho_{\epsilon_k}( \tpi_{k-1} Q_k,\tpi_{k} ) \ \leqslant \ \kappa_k u_{k-1} + e_k 
\end{eqnarray}
where 
\begin{equation*}
e_k = \rho_{\epsilon_k}( \tpi_{k-1} Q_k,\tpi_{k} )
\end{equation*}
and used that $\rho_{\epsilon_k}( \nu_{k-1},\tpi_{k-1} )  \leqslant \rho_{\epsilon_{k-1}}( \nu_{k-1},\tpi_{k-1} ) $ since  $(\epsilon_k)_{k \geq 0}$ is non-increasing.

%
%
%
\begin{lemma}\label{lem:recuit-en}
Assume that the conditions of \Cref{thm:ThmRecuit} hold. Then, there exists $A_6>0$ such that for all $t\geqslant t_0$, all $k \in \{ 1,\ldots, \bfn(t)\}$ and $l\geqslant 1$, there exists $A_l>0$ such that
\begin{equation*}
e_k  \leqslant  A_l (  \sqrt{\beta_{k} - \beta_{k-1}} + \beta_k - \beta_{k-1} )  + A_6  \rme^{-\frac12( \beta_{k-1}-1) l}\eqsp,
\end{equation*}
where $\beta_k$, $\bfn$ and $t_0$ are defined by \eqref{eq:def_beta_k}, \eqref{eq:def_bfn} and \eqref{eq:def_t_0} respectively.
\end{lemma}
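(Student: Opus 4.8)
The plan is to bound $e_k = \rho_{\epsilon_k}(\tpi_{k-1}Q_k, \tpi_k)$ by inserting the frozen-temperature kernel $Q'_k = P_{s_1}^{(\beta_k)}$ (see \eqref{eq:def_Q_'}) as an intermediate point and applying the triangle inequality:
\[
e_k \;\leqslant\; \rho_{\epsilon_k}\!\left(\tpi_{k-1}Q_k, \tpi_{k-1}Q'_k\right) \;+\; \rho_{\epsilon_k}\!\left(\tpi_{k-1}Q'_k, \tpi_k\right)\eqsp.
\]
For the \emph{first} term, $Q_k$ and $Q'_k$ are two BPS semi-groups run for the same time $s_1$, one with the time-dependent cooling schedule $\beta(\cdot)$ restricted to $[t-(\bfn(t)-k+1)s_1, t-(\bfn(t)-k)s_1]$ and one with the frozen value $\beta_k$. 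Since $\beta$ is increasing, on this interval $\beta(u)$ ranges between $\beta_{k-1}$ and $\beta_k$, so the two generators differ only through the factor multiplying $\ps{y}{\nabla U(x)}_+$, and the difference is controlled by $\beta_k - \beta_{k-1}$. First I would build a synchronous coupling of the two processes (common refreshment clock, common exponential thresholds, common refreshment Gaussians) and estimate the probability that a bounce time is "misplaced" between the two processes before $s_1$; this probability is $O(\beta_k-\beta_{k-1})$ thanks to \Cref{ass:recuit} (boundedness of $\msy$ and of $\nabla^2 U$, so $\norm{\nabla U}$ grows at most linearly and the relevant integrals are finite after weighting by $V_1$). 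Combined with the drift bound of \Cref{corollary-recuit} to control the $V_1$-weight, this yields a contribution $\leqslant A_l'(\sqrt{\beta_k-\beta_{k-1}} + \beta_k-\beta_{k-1})$; the square-root appears because the bounce-time perturbation enters through a first-passage time of an integral whose fluctuation scales like the square root of the rate perturbation.

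For the \emph{second} term, $\tpi_{k-1}Q'_k$ is the law obtained by starting from the invariant measure of $Q'_{k-1}$ (temperature $\beta_{k-1}$) and running the frozen chain at temperature $\beta_k$ for time $s_1$, while $\tpi_k$ is its stationary distribution. I would first bound $\rho_{\epsilon_k}(\tpi_{k-1}Q'_k, \tpi_k Q'_k)$ by the contraction property of $Q'_k$ — here is where one invokes \Cref{thm:ThmCVexpo}, exactly as in \Cref{lem:convergence}, to get a factor $\kappa_k<1$ — reducing matters to $\rho_{\epsilon_k}(\tpi_{k-1}, \tpi_k)$, the distance between two Gibbs measures $\pi_{k-1}\otimes\loiy$ and $\pi_k\otimes\loiy$ at nearby inverse temperatures. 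This last quantity is standard in the simulated-annealing literature: under \Cref{ass:recuit} ($U$ coercive, $U(0)=\min U = 0$, $\beta_{k-1}\geqslant 1$), the total-variation (and weighted-$V_1$, since $V_1 e^{-U/2}$ is bounded) distance between $\pi_{k-1}$ and $\pi_k$ is $O(\sqrt{\beta_k-\beta_{k-1}})$ by a Pinsker-type estimate on the relative entropy, which is itself controlled by $|\beta_k-\beta_{k-1}|$ times a second moment of $U$ under $\pi_{k-1}$ that is uniformly bounded for $\beta_{k-1}\geqslant 1$. The truncation term $A_6 \rme^{-\frac12(\beta_{k-1}-1)l}$ arises when one discretizes or truncates an unbounded test function $\varphi$ with $\|\varphi\|_{\epsilon_k,V_1}\leqslant 1$ at level $l$: the tail of $\pi_{k-1}$ beyond $\{U \leqslant l\}$ is at most $\rme^{-\beta_{k-1}l}\big/\rme^{-(\beta_{k-1}-1)\cdot 0}$ up to the normalizing constant $\rmZ_{k-1}$, which since $\beta_{k-1}\geqslant 1$ is comparable to $\int e^{-U}$, giving the stated exponential decay rate $\tfrac12(\beta_{k-1}-1)$ after absorbing a factor $1/2$.

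The main obstacle I anticipate is the \emph{first} term: controlling the discrepancy between the time-inhomogeneous kernel $Q_k$ and its frozen counterpart $Q'_k$ requires a careful pathwise coupling of two BPS processes whose jump rates differ by the (small) amount $\beta(u)-\beta_k$, and tracking how an error in a bounce time propagates — a bounce changes the velocity reflection, hence the subsequent trajectory — while simultaneously keeping the $V_1$-weights under control via \Cref{corollary-recuit}. The cleanest route is to couple the two processes so that they agree until the first bounce of either one and to note that, conditionally on the refreshment times, the difference in the accumulated bounce-rate integrals up to time $s_1$ is bounded by $(\beta_k-\beta_{k-1})s_1 \sup_{\mathsf K'}\norm{\nabla U}\sup_{\msy}\norm{y}$, so that the probability that the two first-bounce times straddle a refreshment (the only way the coupling can fail before $s_1$) is $O(\sqrt{\beta_k-\beta_{k-1}})$; everything else is a routine combination with the drift estimate. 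I would finish by choosing $A_l$ to absorb the $l$-dependence coming from the truncation in the second term and $A_6$ independent of $k$ and $l$, using $\beta_{k-1}\geqslant\beta(t_0)\geqslant\beta_*\geqslant 1$.
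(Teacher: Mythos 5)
Your plan follows essentially the paper's route, up to where the frozen kernel is inserted: the paper bounds $e_k \leq \rho_{\epsilon_k}(\tpi_{k-1},\tpi_k) + \rho_{\epsilon_k}(\tpi_{k-1}Q_k,\tpi_{k-1})$ and uses that $\tpi_{k-1}$ is invariant for the kernel frozen at $\beta_{k-1}$, so no contraction property of the frozen kernel is ever needed, whereas your variant (freezing at $\beta_k$) additionally requires $\rho_{\epsilon_k}(\mu Q'_k,\nu Q'_k)\leq \kappa_k\,\rho_{\epsilon_k}(\mu,\nu)$; that ingredient is indeed available (via \Cref{thm:ThmCVexpo}, or even the weaker bound \eqref{eq:rmq-rho-beta} suffices), and the two building blocks — synchronous coupling of the annealed and frozen dynamics, and Pinsker plus truncation for the two Gibbs measures — are exactly those of the paper.

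However, two of your supporting claims would fail as stated. First, the tail estimate rests on the assertion that $\rmZ_{k-1}=\int_{\rset^d}\rme^{-\beta_{k-1}U}$ is ``comparable to $\int \rme^{-U}$'' for $\beta_{k-1}\geq 1$. This is false: by a Laplace-type argument $\rmZ_{k-1}$ decays polynomially in $\beta_{k-1}$, and the paper must lower bound it by $C\beta_{k-1}^{-d/2}$ (using $U(x)\leq \norm{\nabla^2U}_\infty\norm{x}^2$ and $U(0)=0$); the resulting factor $\beta_k^{d/2}$ is precisely why the exponent in the statement is $(\beta_{k-1}-1)l/2$ rather than $(\beta_{k-1}-1)l$ — half of the exponential is sacrificed to absorb the polynomial so that $A_6$ is independent of $l$ and $k$. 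The same lower bound on $\rmZ_{k-1}$ is needed to make your ``uniformly bounded moment of $U$ under $\pi_{k-1}$'' claim rigorous in the Pinsker step. Second, the coupling term is underspecified exactly where the difficulty lies: the decoupling probability is controlled by the integrated rate difference, which involves $\norm{\nabla U}$ along the trajectory and is unbounded on $\rset^d$, and this must be integrated against the weight $1+\epsilon_k V_1\asymp \rme^{U/2}$. The paper handles this by splitting the initial condition into $\{U<l\}$ (where $\norm{\nabla U}$ is bounded on an enlarged compact, giving an $l$-dependent constant times $\beta_k-\beta_{k-1}$, linearly — no square root arises here) and $\{U\geq l\}$ (giving a second exponential tail contribution); your plan assigns the whole exponential term to the Gibbs comparison and offers only the ``first-passage fluctuation'' heuristic for a $\sqrt{\beta_k-\beta_{k-1}}$ rate, which is not a valid argument. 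One can avoid the compact splitting by Cauchy--Schwarz with uniform $\rme^{U}$-moments under $\tpi_{k-1}$, but that again requires the $\rmZ_{k-1}$ lower bound you are missing.
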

\begin{proof}
Let $t \geq t_0$, $k\in \{1,\ldots,\bfn(t)\}$ and $l \geq 1$.
  In the proof, $C$ stands for a constant which may change from line to line but does not depend on $k$, $l$ and $\beta$.
We bound 
\begin{eqnarray}\label{eq:borne-en-recuit}
e_{k}  \leqslant  \rho_{\epsilon_k}( \tpi_{k-1} ,\tpi_{k} ) + \rho_{\epsilon_k}( \tpi_{k-1} Q_{k},\tpi_{k-1} )
\end{eqnarray}
and deal with each terms of the right hand side  apart.
Indeed, for the first one, the first marginal of $\tpi_{k-1}$ and $\tpi_{k}$ having an explicit density, and their second marginal being equal, we bound
\begin{align}
  \nonumber
 \rho_{\epsilon_k}( \tpi_{k-1} ,\tpi_{k} ) & =  \int_{\rset^d\times\msy} ( 1 + \epsilon_{k} V_1(x,y)) |\pi_{k}(x) - \pi_{k-1}(x)| \rmd x \loiy(\rmd y)\\
  \nonumber
                                           & \leqslant   C\int_{\rset^d} \rme^{U(x)/2} \left|\frac{\rme^{-\beta_{k} U(x)}}{\rmZ_{k}} - \frac{\rme^{-\beta_{k-1} U(x)}}{\rmZ_{k-1}} \right|\dd x\\
    \nonumber
                                           & \leqslant  C \rme^{ l/2}\int_{\rset^d} \left|\frac{\rme^{-\beta_{k} U(x)}}{\rmZ_{k}} - \frac{\rme^{-\beta_{k-1} U(x)}}{\rmZ_{k-1}} \right|\dd x\\
  \label{eq:recuit_1}
&   + C \int_{\{U>l\}} \frac{\rme^{-(\beta_{k}-\frac12) U(x)}}{\rmZ_{k}} + \frac{\rme^{-( \beta_{k-1}-\frac12) U(x)}}{\rmZ_{k-1}} \dd x\eqsp.
\end{align}
We treat the two terms in the right-hand-side apart. 
The first term is the  total variation distance between  $\pi_{k}$ and $\pi_{k-1}$. Since $\beta_{k-1} \leq \beta_k$ since $\beta$ is non-decreasing, $\rmZ_{k-1} \geqslant \rmZ_{k}$. Using  Pinsker's inequality and this result, we get
\begin{align}
  \nonumber
  \parenthese{ \int_{\rset^d} \left|\frac{\rme^{-\beta_{k} U(x)}}{\rmZ_{k}} - \frac{\rme^{-\beta_{k-1} U(x)}}{\rmZ_{k-1}} \right|\dd x }^2 &\leqslant   2
                                                                                                                                             \int_{\rset^d} \ln\parenthese{ \frac{\rme^{-\beta_{k-1} U(x)}\rmZ_{k}}{\rme^{-\beta_{k} U(x)}\rmZ_{k-1}}  } \pi_{k-1}(x) \rmd x \\
    \nonumber
                                                                                                                                           &\leqslant  2 ( \beta_{k} - \beta_{k-1}) \int_{\rset^d} U(x) \frac{\rme^{-\beta_{k-1} U(x)}}{\rmZ_{k-1}} \dd x\\
    \nonumber
& \leqslant   2  ( \beta_{k} - \beta_{k-1}) ( 1 + C \sqrt{\beta_{k-1}} \rme^{-\beta_{k-1}+1} ) \\
\label{eq:recuit_masse_0}
                                                                                                                                         & \leqslant   C  ( \beta_{k} - \beta_{k-1})\eqsp,
\end{align}
where we used for the two last inequalities that 
\[\int_{\{U>1\}} U(x) \rme^{-U(x)}  \dd x \ \leqslant \ 2 \int_{\rset^d}   \rme^{-U(x)/2}  \dd x\ < \ \infty\]
and since $U(0) = 0$, $U(x) \leq \norm{\nabla^2 U}_{\infty} \norm{x}^2$, for any $x \in \rset^d$ by \Cref{ass:recuit}, 
\[\rmZ_{k-1} \ \geqslant \ \int_{\rset^d} \rme^{- \beta_{k-1} \|\na^2 U\|_\infty \norm[2]{x}} \rmd x \ \geqslant\ C \beta_{k-1}^{-d/2} >0 \eqsp.\]

Similarly, for the second term of \eqref{eq:recuit_1} we obtain 
\begin{equation*}
 \int_{\{U>l\}} \frac{\rme^{-(\beta_{k}-\frac12) U(x)}}{\rmZ_{k}}  \dd x \leqslant   C \beta_k^{d/2}\rme^{-(\beta_{k}-1)l} \int_{\rset^d} \rme^{- U(x)/2} \rmd x\eqsp.
\end{equation*}
Using that for any $t \geq 1$, $t^{d/2}\exp(-l(t-1)/2) \leq  (d/l)^{d/2}\exp(-(d-l)/2)$ if $d \geq \ell$ and $t^{d/2}\exp(-l(t-1)/2) \leq 1$ otherwise, there exists $A_{6,1}$ which does not depend on $l$ such that 
\begin{equation}
  \label{eq:recuit-masse}
 \int_{\{U>l\}} \frac{\rme^{-(\beta_{k}-\frac12) U(x)}}{\rmZ_{k}}  \dd x \leq A_{6,1} \rme^{-(\beta_{k}-1)l/2} \eqsp.
\end{equation}
Combining this bound and~\eqref{eq:recuit_masse_0} in \eqref{eq:recuit_1}, we get that there exists $A_{l,1} \geq 0$ such that  
\begin{equation}
  \label{eq:recuit_first_bound_main_lemma}
   \rho_{\epsilon_k}( \tpi_{k-1} ,\tpi_{k} ) \leq A_{l,1}\sqrt{\beta_k-\beta_{k-1}} + A_{6,1} \rme^{-(\beta_k-1)l/2} \eqsp.
\end{equation}

The second term of \eqref{eq:borne-en-recuit} is treated through a synchronous coupling similar to \cite[Proposition 11]{DurmusGuillinMonmarche:toolbox}. Indeed, $\tpi_{k-1}$ being invariant for $Q'_{k-1}$ defined in \eqref{eq:def_Q_'} and by \eqref{eq:def_rho_zeta},
\begin{align}
\nonumber
  \rho_{\epsilon_{k}}( \tpi_{k-1} Q_{k},\tpi_{k-1} )& = \rho_{\epsilon_{k}}( \tpi_{k-1} Q_{k},\tpi_{k-1}Q_{k-1}' )\\
  \label{eq:bound_rho_eps_k}
 & =  \underset{\norm{\varphi}_{\epsilon_{k},V_1}\leqslant 1}\sup\left\{ \mathbb E[ \varphi( X_{s_1},Y_{s_1}) - \varphi( X_{s_1}',Y_{s_1}')]\right\}\eqsp,
\end{align}
where $(X_t,Y_t)_{t\geqslant0}$ (resp. $(X_t',Y_t')_{t\geqslant0}$) is a BPS process with a fixed temperature $\beta_{k-1}$ (resp. a annealed BPS process with  cooling schedule $s \mapsto \beta(t-(\bfn(t)-k+1)s_1 +s)$) and $(X_0,Y_0) = (X_0',Y_0')$ is distributed according to  $\tpi_{k-1}$. Following \cite[Proposition 11]{DurmusGuillinMonmarche:toolbox}, we construct such processes in such a way $(X_t,Y_t) = (X_t',Y_t')$ up to time $T^{'}_{\rmb}$, where $T^{'}_{\rmb}$ is the first time $(X_t',Y_t')_{t \geq 0}$ bounces while $(X_t,Y_t)_{t \geq 0}$ does not, defined by
\begin{equation*}
T^{'}_{\rmb} =  \inf\left\{\tau \geqslant 0,\ E < \int_0^\tau ( \beta_{k} - \beta_{t-(\bfn(t)-n)s_1 + s}) \ps{Y^{(\beta)}_s}{\nabla U(X^{(\beta)}_s)}_+ \rmd s\right\}\eqsp,
\end{equation*}
where $E$ is a standard exponential random variable independent of $Z$.

Consider the compact sets $\mathsf K = \{(x,y)\in\rset^d\times\msy \, : \, U(x)<l\}$ and $\tmsk = \{(x,y) \in\rset^d\times\msy \, : \, \dist((x,y),\msk) \leq M s_1\}$, where $\dist(\cdot,\msk)$ is the distance from $\msk$ and $M = \sup_{z \in \msy} \norm{z}$. That way, if a BPS with refreshment law $\loiy$ over $\msy$ have an initial condition in $\mathsf K$, then on the time interval $[0,s_1]$ it necessarily stays in $\tmsk$.
 
Consider $\varphi:\rset^d\times \msy : \rightarrow\rset$ with $\norm{\varphi}_{\epsilon_{n+1},V_1}\leqslant 1$ and the following decomposition
\begin{align}
  \nonumber
  \mathbb E[ \varphi( X_{s_1},Y_{s_1}) - \varphi( X_{s_1}',&Y_{s_1}')] =  \mathbb E[\1_{\msk}(X_0,Y_0)\{ \varphi( X_{s_1},Y_{s_1}) - \varphi( X_{s_1}',Y_{s_1}')\}] \\ & +  \mathbb E[\1_{\rset^d \times \msy \setminus \msk}(X_0,Y_0)\{ \varphi( X_{s_1},Y_{s_1}) - \varphi( X_{s_1}',Y_{s_1}')\}] \eqsp.
\label{eq:decomp_recuit}
\end{align}
We bound the two terms separately. First, using that if $(X_0,Y_0) \in \msk$, then for any $t \in \ccint{0,s_1}$, $(X_t',Y_t') \in \tmsk$, we have 
 \begin{align}
\nonumber
      \mathbb E[ \1_{\msk}(X_0,Y_0) & \{\varphi( X_{s_1},Y_{s_1}) - \varphi( X_{s_1}',Y_{s_1}')\}]\\
\nonumber
   =   2 ( 1 + \epsilon_1)& \| V_1\|_{\infty, \tmsk} \proba{(X_0,Y_0) \in \msk, (X_{s_1},Y_{s_1}) \neq (X_{s_1}',Y_{s_1}')}\\
   \nonumber
   =   2 ( 1 + \epsilon_1) &\| V_1\|_{\infty, \tmsk} \proba{(X_0,Y_0) \in \msk, T'_{\rmb} < s_1} \\
\nonumber
    \leqslant   2 ( 1 + \epsilon_1) & \| V_1\|_{\infty, \tmsk}  \proba{ E < M \norm{\na U}_{\infty,\tmsk} \int_0^{s_1} ( \beta_{k} - \beta_{t-(\bfn(t)-n)s_1 +s})  \rmd s}\\
   \label{eq:first_bound_decomp_recuit}
 & \leqslant   2 ( 1 + \epsilon_1) \| V_1\|_{\infty, \tmsk}  M \norm{\na U}_{\infty,\tmsk} s_1 ( \beta_{k} - \beta_{k-1})   \eqsp,
 \end{align}
 where $M = \sup_{z \in \msy}\norm{z}$.
Note that $\norm{\na U}_{\infty,\tmsk} $ depends on $\tmsk$, hence on $l$.
 
 Next using \Cref{lem:lyap-recuit} and the Markov property, we get
 \begin{align*}
  \mathbb E[\1_{\rset^d \times \msy \setminus \msk}(X_0,Y_0) & \{ \varphi( X_{s_1},Y_{s_1}) - \varphi( X_{s_1}',Y_{s_1}')\}] \\
 & \leqslant   ( 1 + \epsilon_1) \mathbb E[\1_{\rset^d \times \msy \setminus \msk}(X_0,Y_0)\{ V_1( X_{s_1}) + V_1( X_{s_1}')\}]\\
                                                             &  \leqslant  2 ( 1 + \epsilon_1) \parenthese{  \expe{\1_{\rset^d \times \msy \setminus \msk}(X_0,Y_0) V_1(X_0)} + A_2}\\
                                                             & \leq 2 ( 1 + \epsilon_1) \parenthese{ C \int_{U\geqslant l} \rme^{ U(x)/2}  \tpi_{k-1}(\rmd x)  + A_2} \\
   & \leq 2 ( 1 + \epsilon_1) \parenthese{ C \rme^{-(\beta_{k}-1)l/2} +A_2} \eqsp.
 \end{align*}
where we used for the penultimate inequality  that $(X_0,Y_0)$ is distributed according to $\tpi_{k-1}$,
Combining this result and  \eqref{eq:first_bound_decomp_recuit}  in \eqref{eq:decomp_recuit} and \eqref{eq:bound_rho_eps_k}, we get there exist $A_{6,2} \geq 0$ independent of $l$ and $A_{l,2} \geq 0$ satisfying 
\begin{equation*}
  \rho_{\epsilon_{k}}( \tpi_{k-1} Q_{k},\tpi_{k-1} ) \leq A_{l,2}(\beta_{k}-\beta_{k-1}) + A_{6,2}   \rme^{-(\beta_{k}-1)l/2} \eqsp.
\end{equation*}
The proof is concluded combining this result and \eqref{eq:recuit_first_bound_main_lemma} in \eqref{eq:borne-en-recuit}. 
\end{proof}

\begin{lemma}
  \label{lem:recuit-an}
Assume  \Cref{ass:recuit}. There exists $\theta >0$ such that if \Cref{ass:recuit-beta} holds with  $D_1 \leq \theta^{-1}$,
then
 there exists $A_7 >0$ satisfying for all $t\geqslant t_0$,  $k\leqslant \bfn(t)$ and $(x,y) \in \rset^d\times \msy$,
$
u_{k} \leq A_7V_1(x,y) /k^{q_1}$
where $u_{k}$ is given in \eqref{eq:def_an}  and  $q_1 = (1/2) (1 - \theta D_1)$.
\end{lemma}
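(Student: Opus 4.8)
The plan is to iterate the one-step estimate \eqref{eq:recuit-an-en}, $u_k \leq \kappa_k u_{k-1} + e_k$, balancing the deterioration of the contraction factors $\kappa_k\uparrow 1$ against the decay of the errors $e_k$. The first step is to convert \Cref{ass:recuit-beta} into information on the sequence $\beta_k = \beta(\tau_0 + k s_1)$, where $\tau_0 = t - \bfn(t)s_1 \in \ccint{t_0,t_0+s_1}$ is bounded. Since $\beta$ is nondecreasing, iterating $\beta(u+s_0)-\beta(u)\leq D_1/u$ over $\lceil s_1/s_0\rceil$ consecutive steps gives, for $u$ large, an at-most-logarithmic growth $\beta(u)\leq C_\beta + (D_1/s_0)\ln u$ and an increment bound $\beta(u+s_1)-\beta(u)\leq CD_1/u$; together with $\beta(u)\geq D_2\ln u$ this yields, for $k$ large,
\begin{equation*}
D_2\ln k - C \ \leq\ \beta_{k-1}\ \leq\ \beta_k\ \leq\ C + (D_1/s_0)\ln k \eqsp,\qquad \beta_k - \beta_{k-1}\ \leq\ CD_1/k \eqsp.
\end{equation*}

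\textbf{Consequences for $\kappa_k$, $e_k$ and $u_0$.} As $\beta$ is nondecreasing, $s_1\beta_{k-1}\leq \int_{t-(\bfn(t)-k+1)s_1}^{t-(\bfn(t)-k)s_1}\beta_u\,\rmd u\leq s_1\beta_k$, so writing $\kappa_k = 1-\delta_k$ with $\delta_k$ the quantity in \Cref{lem:convergence} (hence $\kappa_k\leq \rme^{-\delta_k}$), the upper bound on $\beta_k$ gives $\delta_k\geq c_\delta\, k^{-\alpha}$ with $\alpha = A_5 s_1 D_1/s_0$. I would then choose $\theta$ to be a suitable multiple of $A_5 s_1/s_0$, so that the hypothesis $D_1\leq\theta^{-1}$ forces $\alpha\leq \theta D_1/2\leq 1/2$ and $q_1 = (1/2)(1-\theta D_1) = 1/2-\alpha\geq 0$ (with $q_1>0$ when $D_1<\theta^{-1}$). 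Next, fixing an integer $l$ with $D_2 l/2\geq 1/2$, \Cref{lem:recuit-en} together with the bounds on $\beta_k-\beta_{k-1}$ and $\beta_{k-1}$ gives $e_k\leq C_e\, k^{-1/2}$ for $k$ large, the dominant contribution being $\sqrt{\beta_k-\beta_{k-1}}$. Finally, since $\nu_0 = \updelta_{(x,y)}P_{0,\tau_0}$ with $\tau_0$ bounded, the drift bound $P_{0,\tau_0}V_1\leq A_4\rme^{A_3\tau_0}V_1$ of \Cref{corollary-recuit}, the elementary inequality $\rho_\zeta(\mu_1,\mu_2)\leq 2+\zeta(\mu_1(V_1)+\mu_2(V_1))$, and the boundedness of $\epsilon_0$ give $u_0\leq C_0 V_1(x,y)$ (using $V_1\geq 1$ after rescaling).

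\textbf{Unrolling.} Iterating \eqref{eq:recuit-an-en},
\begin{equation*}
u_k\ \leq\ \Big(\prod_{j=1}^k\kappa_j\Big)u_0 + \sum_{i=1}^k\Big(\prod_{j=i+1}^k\kappa_j\Big)e_i\ \leq\ \rme^{-\Phi(k)}u_0 + \sum_{i=1}^k\rme^{-(\Phi(k)-\Phi(i))}e_i \eqsp,
\end{equation*}
with $\Phi(k) = \sum_{j=1}^k\delta_j\geq c_\delta\sum_{j=1}^k j^{-\alpha}\geq c_\delta'(k^{1-\alpha}-1)$ since $\alpha<1$. The first term is $\leq C_0 V_1(x,y)\rme^{-c_\delta'(k^{1-\alpha}-1)}$, which decays faster than $k^{-q_1}$. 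For the sum, split at $i=\floor{k/2}$: when $i\leq k/2$ one has $\Phi(k)-\Phi(i)\geq (k/2)c_\delta k^{-\alpha}\geq c''k^{1-\alpha}$, so this part is at most $C_e\rme^{-c''k^{1-\alpha}}\sum_{i\leq k/2}i^{-1/2}$, again negligible; when $i>k/2$, bound $e_i\leq C_e i^{-1/2}\leq C k^{-1/2}$, and use $\rme^{\Phi(i)}-\rme^{\Phi(i-1)}\geq \rme^{-1}\delta_i\rme^{\Phi(i)}\geq \rme^{-1}c_\delta i^{-\alpha}\rme^{\Phi(i)}$ together with $i^{\alpha}\leq k^{\alpha}$ and telescoping to get $\sum_{i=1}^k\rme^{\Phi(i)}\leq C k^{\alpha}\rme^{\Phi(k)}$, whence $\sum_{i>k/2}\rme^{-(\Phi(k)-\Phi(i))}\leq C k^{\alpha}$ and this part is $\leq C k^{\alpha-1/2} = C k^{-q_1}$. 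The finitely many indices $k$ below the threshold where the asymptotics apply are handled by the crude drift bound $u_k\leq C V_1(x,y)$, absorbed into $A_7$. Combining these estimates gives $u_k\leq A_7 V_1(x,y)/k^{q_1}$.

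\textbf{Main obstacle.} The technical heart is the last summation estimate: one must show that the slow build-up $\Phi(k)\sim k^{1-\alpha}$ of the contraction deficit still beats the $i^{-1/2}$ decay of the errors down to the exact exponent $q_1 = 1/2-\alpha$, and it is precisely here that the smallness condition $D_1\leq\theta^{-1}$ (which keeps $\alpha<1/2$) enters. This is the familiar tension in quantitative simulated annealing: the cooling schedule must be slow enough for the temperature-change errors $e_i$ to remain controllable against the worsening mixing rate, yet fast enough that $\Phi(k)$ still grows and forces decay.
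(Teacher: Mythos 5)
Your proposal is correct and follows essentially the same route as the paper: unroll $u_k\leq\kappa_k u_{k-1}+e_k$, get $e_k\lesssim k^{-1/2}$ from \Cref{lem:recuit-en} with a fixed level $l\asymp 1/D_2$, get $1-\kappa_k\gtrsim k^{-\theta D_1/2}$ from the logarithmic growth of $\beta$ with $\theta$ chosen proportional to $A_5 s_1$ (you keep the $s_0$ factor the paper glosses over, which is fine since only existence of $\theta$ is claimed), and conclude with $u_0\lesssim V_1(x,y)$ from \Cref{corollary-recuit}. The only difference is cosmetic: you estimate the weighted error sum by a discrete telescoping bound $\sum_i \rme^{\Phi(i)}\lesssim k^{\alpha}\rme^{\Phi(k)}$, where the paper compares with an integral and integrates by parts; both give the exponent $k^{\alpha-1/2}\leq k^{-q_1}$.
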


\begin{proof}
  Let $l \geq 1$, $t \geq t_0$ and $k \in \{1,\ldots \bfn(t)\}$.
  In the proof, $C$ stands for a constant which may change from line to line but does not depend on $k$, $l$ and $\beta$.  Denoting $d_0=0$ and $d_{k} = \kappa_{k}  d_{k-1} + e_{k} $, \eqref{eq:recuit-an-en} reads
\begin{equation*}
u_{k} - d_{k}  \leqslant  \kappa_{k} ( u_{k-1} - d_{k-1})
\end{equation*}
and yields
\begin{equation}
  \label{eq:bound_a_k}
u_{k}  \leqslant    u_0 \prod_{j=1}^k \kappa_j +   \sum_{i=1}^k \defEns{ e_i \prod_{j=i}^{k-1} \kappa_j }\eqsp,
\end{equation}
with the convention that $\prod_{j=k}^{k-1} \kappa_j = 1$. From \Cref{lem:recuit-en} applied with $l=1/D_2$, and bounding
\[\beta_{k} - \beta_{k-1} \ \leqslant \ \frac{D_1\lceil s_1/s_0\rceil }{t - (\bfn(t)-k+1)s_1}\]
for $k$ large enough, we get
\begin{equation}
  \label{eq:bound_ek}
  e_{k} \leqslant C/\sqrt{k} \eqsp
\end{equation}
Let  $\theta = 2A_5 s_1$,  so that, by definition of $\kappa_k$ given in \Cref{lem:convergence} the condition $\beta$,  and using $1-s \leq \rme^{-s}$, we have 
\[\kappa_{k}\ \leqslant \ 1 - \parenthese{ \frac{\chi}2 \wedge \frac{1-\gamma}{4}}\exp ( - \theta \beta_{k}/2 ) \ \leqslant \ \exp( - C n^{-\theta D_1/2}) \]
 Hence, for $i\in\{1,\ldots,k\}$,
 \begin{equation*}
   \prod_{j=i}^k \kappa_j \ \leqslant \exp\parenthese{ -C \sum_{j=i}^n j^{-\theta D_1/2}} \leqslant \exp\parenthese{ -(C/q)\{  (n+1)^{q} - i^{q}\}} \eqsp,
 \end{equation*}
with $q= 1 - \theta D_1/2 \in\ooint{1/2,1}$ by assumption. Thus, combining this result and \eqref{eq:bound_ek} in \eqref{eq:bound_a_k}, we get
\begin{equation*}
u_{k}  \leqslant    u_0 \rme^{-(C/q) (n^{q}-1)}+  C ( k^{-1/2} + I(k))  
\end{equation*}
with
\begin{align*}
I(k) & =  \rme^{-(C/q) k^q }\int_1^{k} \frac1{\sqrt{s}} \rme^{(C/q) s^q}\rmd s \\
& = \frac1{C}\parenthese{ \frac{1}{k^{q-\frac12}} - \rme^{-(C/q) (k^q -1)} }+ \frac{\rme^{-(C/q) k^q }}{C}( q - 1/2) \int_1^k s^{q-\frac32} \rme^{(C/q) s^q}\rmd s\\
&\leqslant   \frac{1}{C k^{q-\frac12}}  + \frac{\rme^{-(C/q) k^q }}{C} \int_1^{k_0} s^{q-\frac32} \rme^{(C/q) s^q}\rmd s + \frac{I(k)}{C k_0^{1-q}}
\end{align*}
for all $k_0\geqslant 1$. In particular, for $k_0\geqslant (2/C)^{1/(1-q)}$, this means $I(k) \leqslant Ck^{1/2-q}$.

Finally, from the first part  \Cref{corollary-recuit}, $u_0 \leqslant C  V_1(x,y)$.
\end{proof}
%

\begin{proof}[Proof of  \Cref{thm:ThmRecuit}]
Let $t>t_0$, $n= \bfn(t)$, $\eta>\eta'>0$.   In the proof, $C$ stands for a constant which may change from line to line but does not depend on $n$, $\eta,\eta',t$ and $\beta$. First,
\begin{equation*}
\mathbb P ( U(X^{(\beta)}_t) > \eta + \min U )   \leqslant  \int_{\{U\geqslant \eta\}} \tpi_{k}(\rmd x,\rmd y) + (1/2) \tvnorm{ P_{0,t}((x,y),\cdot) - \tpi_{k}}\eqsp.
\end{equation*}
  Similarly to \eqref{eq:recuit-masse},
\begin{equation*}
\int_{\{U\geqslant \eta\}} \tpi_{k}(\rmd x,\rmd y)    \leqslant   C \rme^{-\beta_{k} \eta'} \ \leqslant \  C t^{-D_2\eta'}\eqsp.
\end{equation*}
We conclude, with \Cref{lem:recuit-an} and the first part of \Cref{corollary-recuit}, by
\begin{equation*}
\tvnorm{\nu P_{0,t} - \tpi_{k}}  \leqslant   u_{k} \ \leqslant C V_1(x,y) /t^q \eqsp.
\end{equation*}
\end{proof}

   \section*{Appendix B. Quantitative contraction rates for Markov chains}
\label{app:quantitative_bounds}
In this section, we give for completeness a quantitative version of  \cite[Theorem 1.2]{HairerMattingly2011} which is used in \Cref{sec:metast-regime-anne}. Let $Q$ be a Markov operator on a smooth finite dimension manifold $\msm$ (in our applications $Q=P_{t_0}$ for some $t_0>0$, with $\msm= \R^{d} \times \msy$) and $V:\msm \to \coint{1,\plusinfty}$ (which can be thought as the one given by \eqref{eq:def_lyap_gene}). 

For any measurable function $\varphi : \msm \to \rset$ and $\zeta \geqslant 0$, we set
\begin{equation*}
\|\varphi\|_{\zeta,V}  =\underset{x\in \msm} \sup  \defEns  { \frac{|\varphi(x)|}{1+\zeta V(x)}} \eqsp,
\end{equation*}
and consider the weighted $V$-norm on $\mathcal P_{V}(\msm)=\{\mu\in\mathcal P(\msm) \, :\,  \mu(V) < \infty\}$, defined for $\mu_1, \mu_2 \in \mcp_{V}(\msm)$ by
\begin{equation}
  \label{eq:def_rho_zeta_D}
\rho_\zeta(\mu_1,\mu_2)  =  \sup\left\{ \mu_1( \varphi) - \mu_2( \varphi) \, : \,  \|\varphi\|_{\zeta,V} \leqslant 1\right\}.
\end{equation}

\begin{theorem}\label{thm:ThmCVexpo}
Suppose that there exist $\alpha,\gamma\in(0,1)$, $C_1>0$ and $C_2 > 2C_1$ such that for all $x,y,z \in \msm$, $V(x)+ V(y) \leq C_2$, 
\begin{equation*}
 \tvnorm{ Q(x,\cdot) -  Q(y,\cdot)} \leqslant 2( 1 - \alpha) \eqsp, \qquad QV(z) \leqslant  \gamma V(z) + C_1(1-\gamma)\eqsp.
\end{equation*} 
Then there exists $\zeta>0$ and $\kappa\in \ooint{0,1}$ such that for all $\mu_1,\mu_2\in\mathcal P_V( \msm)$,
\begin{equation*}
\rho_\zeta(\mu_1 Q,\mu_2 Q) \leqslant  \kappa \rho_\zeta(\mu_1,\mu_2) \eqsp,
\end{equation*}
where $\rho_{\zeta}$ is defined by \eqref{eq:def_rho_zeta_D}.
More precisely, if $C_2 = 4C_1$, then this holds with
\begin{align*}
\zeta  = \alpha((1-\gamma)C_1)^{-1} \eqsp, \qquad \qquad \kappa  =  ( 1 - \alpha/ 2) \vee ( (3+\gamma)/4) \eqsp.
\end{align*}
\end{theorem}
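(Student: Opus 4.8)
The plan is to follow the proof of \cite[Theorem 1.2]{HairerMattingly2011} and keep track of the constants. The first step is to recognize $\rho_\zeta$ as a Wasserstein-type distance. Set $d_\zeta(x,y) = \mathbb{1}_{\{x\neq y\}}\bigl(2 + \zeta V(x) + \zeta V(y)\bigr)$ on $\msm\times\msm$; this is a genuine metric precisely because $V\geqslant 1$ (the constant $2$ makes the triangle inequality hold with room to spare), and Kantorovich duality gives $\rho_\zeta(\mu_1,\mu_2) = \inf_\pi \int d_\zeta\,\mathrm d\pi$, the infimum over couplings $\pi$ of $\mu_1,\mu_2$ — the point being that a function is $1$-Lipschitz for $d_\zeta$ if and only if it agrees, up to an additive constant, with a function of $\|\cdot\|_{\zeta,V}$-norm at most $1$. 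Consequently, by gluing couplings, $\rho_\zeta(\mu_1 Q,\mu_2 Q) \leqslant \int \rho_\zeta\bigl(Q(x,\cdot),Q(y,\cdot)\bigr)\,\mathrm d\pi(x,y)$ for any coupling $\pi$ of $\mu_1,\mu_2$, so it is enough to prove the one-step pointwise estimate $\rho_\zeta\bigl(Q(x,\cdot),Q(y,\cdot)\bigr) \leqslant \kappa\, d_\zeta(x,y)$ for all $x,y$. The case $x=y$ is trivial; fix $x\neq y$ and split into two regimes.

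In the regime $V(x)+V(y) > C_2$, I would use the drift alone: coupling $Q(x,\cdot)$ and $Q(y,\cdot)$ in an arbitrary way and bounding $\mathbb{1}_{\{X'\neq Y'\}}\leqslant 1$ gives $\rho_\zeta\bigl(Q(x,\cdot),Q(y,\cdot)\bigr) \leqslant 2 + \zeta(QV(x)+QV(y))$. Since $QV(x)+QV(y)\leqslant \gamma(V(x)+V(y)) + 2C_1(1-\gamma)$ and $V(x)+V(y) > C_2 \geqslant 2C_1$, the additive term is absorbed: $QV(x)+QV(y) \leqslant \bigl(\gamma + 2C_1(1-\gamma)/C_2\bigr)(V(x)+V(y))$, which equals $\tfrac{1+\gamma}{2}(V(x)+V(y))$ when $C_2=4C_1$. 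Comparing with $d_\zeta(x,y) = 2 + \zeta(V(x)+V(y))$, the resulting ratio is decreasing in $V(x)+V(y)$, so the worst case is $V(x)+V(y)=C_2$; substituting $\zeta C_2 = 4\alpha/(1-\gamma)$ yields a ratio at most $\bigl(1-\gamma+\alpha(1+\gamma)\bigr)/\bigl(1-\gamma+2\alpha\bigr)$, which one checks — by splitting at a threshold $\alpha\asymp 1-\gamma$ — to be bounded by $(1-\alpha/2)\vee\tfrac{3+\gamma}{4}$.

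In the regime $V(x)+V(y)\leqslant C_2$, the minorization enters: the hypothesis $\tvnorm{Q(x,\cdot)-Q(y,\cdot)}\leqslant 2(1-\alpha)$ furnishes a coupling $(X',Y')$ of $Q(x,\cdot),Q(y,\cdot)$ with $\mathbb{P}(X'\neq Y')\leqslant 1-\alpha$, and then $\rho_\zeta\bigl(Q(x,\cdot),Q(y,\cdot)\bigr) \leqslant 2\mathbb{P}(X'\neq Y') + \zeta\,\mathbb{E}\bigl[\mathbb{1}_{\{X'\neq Y'\}}(V(X')+V(Y'))\bigr] \leqslant 2(1-\alpha) + \zeta(QV(x)+QV(y))$, and once more $QV(x)+QV(y)\leqslant\gamma(V(x)+V(y)) + 2C_1(1-\gamma)$. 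The choice $\zeta = \alpha\bigl((1-\gamma)C_1\bigr)^{-1}$ is dictated exactly so that $2\zeta C_1(1-\gamma) = 2\alpha$ cancels against $2(1-\alpha)$, leaving a bound of the form $\kappa\, d_\zeta(x,y)$ with the same $\kappa$ as above (when $C_2 = 4C_1$); the general case $C_2 > 2C_1$ is identical, keeping $2C_1/C_2\in(0,1)$ as a parameter. Combining the two regimes gives the announced $\zeta$ and $\kappa$.

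The delicate part is this joint calibration of $\zeta$ against the two mechanisms: $\zeta$ must be small enough that on the small set the total-variation gain $2(1-\alpha)$ is not swamped by the heavy-tail contribution $\zeta(QV(x)+QV(y))$, yet large enough that on the large set the Lyapunov contraction $\gamma<1$ is genuinely felt in the $\rho_\zeta$-metric (which weights $V$); the requirement $C_2 > 2C_1$ is precisely what makes the additive drift constant strictly absorbable on the large set. Everything else — the coupling inequalities, the reduction from measures to point masses, and the elementary optimisation collapsing the two case rates into $(1-\alpha/2)\vee\tfrac{3+\gamma}{4}$ — is routine, and I would cite \cite{HairerMattingly2011} for the constant bookkeeping and only reproduce what is needed for the explicit values used in \Cref{sec:metast-regime-anne}.
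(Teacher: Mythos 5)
Your framework coincides with the paper's up to duality: the paper bounds $|Q\varphi|_{\zeta}$ for test functions with $|\varphi|_{\zeta}\leq 1$ (via \cite[Lemma 2.1]{HairerMattingly2011}), while you pass to the Kantorovich/coupling formulation for the metric $d_\zeta(x,y)=\1_{\{x\neq y\}}(2+\zeta V(x)+\zeta V(y))$ and reduce to the pointwise estimate $\rho_\zeta(Q(x,\cdot),Q(y,\cdot))\leq \kappa\, d_\zeta(x,y)$; that reduction is sound, and your large-set computation is correct: it recovers exactly the paper's $\kappa_1= 1-\alpha(1-\gamma)/(1-\gamma+2\alpha)$ and its bound by $(1-\alpha/2)\vee\{(3+\gamma)/4\}$.

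The gap is in the small-set regime. Your chain gives $\rho_\zeta(Q(x,\cdot),Q(y,\cdot))\leq 2(1-\alpha)+\zeta\{QV(x)+QV(y)\}\leq 2(1-\alpha)+2\zeta C_1(1-\gamma)+\zeta\gamma S$ with $S=V(x)+V(y)$, and with $\zeta=\alpha((1-\gamma)C_1)^{-1}$ the claimed cancellation $2(1-\alpha)+2\alpha=2$ leaves the bound $2+\zeta\gamma S$. But $2+\zeta\gamma S\leq\kappa(2+\zeta S)$ evaluated at $S=2$ (take $V(x)=V(y)=1$) forces $\kappa\geq(1+\zeta\gamma)/(1+\zeta)$, which tends to $1$ as $\zeta\to 0$, i.e.\ as $C_1$ grows with $\alpha,\gamma$ fixed; so no $\kappa<1$ depending only on $\alpha$ and $\gamma$ can close this step, and in particular the stated $\kappa=(1-\alpha/2)\vee\{(3+\gamma)/4\}$ fails. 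The whole purpose of the Doeblin step is to contract the constant ``$2$''; by spending the entire total-variation gain $2\alpha$ on absorbing the drift constant you are left with no contraction of that term. The cure is to retain a strictly positive fraction of $\alpha$: with the honest bound $\expe{\1_{\{Z_x\neq Z_y\}}(V(Z_x)+V(Z_y))}\leq QV(x)+QV(y)$, choose $\zeta$ so that $2\zeta C_1(1-\gamma)\leq\alpha$, e.g.\ $\zeta=\alpha/(2(1-\gamma)C_1)$ (this is Hairer--Mattingly's $\beta=\alpha_0/K$ with $\alpha_0=\alpha/2<\alpha$), which yields a small-set factor $(1-\alpha/2)\vee\gamma$ while the same large-set computation still closes with $\kappa_1=1-\alpha(1-\gamma)/(2((1-\gamma)+\alpha))<1$; this proves the theorem with explicit constants of the same shape, though not the exact pair $(\zeta,\kappa)$ announced. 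Note that even the paper's proof spends only part of the gain (its $\kappa_2=(1-\alpha+\zeta C_1(1-\gamma)/2)\vee\gamma$), so full cancellation is not what dictates the choice of $\zeta$.
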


\begin{proof}
 \cite[Lemma 2.1]{HairerMattingly2011} shows that 
\begin{equation*}
\rho_\zeta(\mu_1,\mu_2)  =  \sup\left\{ \mu_1( \varphi) - \mu_2 (\varphi) \, :\,   |\varphi |_{\zeta} \leqslant 1\right\} \eqsp,
\end{equation*}
where $\rho_\zeta$ is defined by \eqref{eq:def_rho_zeta} and
\begin{equation*}
 |\varphi |_{\zeta}  =  \underset{x\neq y}\sup\  \defEns{\frac{|\varphi(x) - \varphi(y)|}{2+ \zeta V(x) + \zeta V(y)} } =  \underset{c\in\R }\inf \| \varphi +c\|_{\zeta,V}.
\end{equation*}
Let $\varphi$ be a measurable function  such that $|\varphi |_{\zeta} = \| \varphi  \|_{\zeta,V} = 1$. We aim to show that $|Q \varphi |_{\zeta}\leqslant \kappa$ or, in other words, that
\begin{equation*}
|Q\varphi(x) - Q\varphi(y)|  \leqslant  \kappa ( 2 + \zeta V(x) + \zeta V(y)) 
\end{equation*}
for all $x,y\in \msm$.

First, consider the case where $V(x)+V(y) \geqslant C_2$. For $\zeta>0$, set
 $\kappa_1 = \gamma + (1-\gamma)\frac{1+\zeta C_1}{1+ \zeta C_2/2}$. Note that $\gamma < \kappa_1 <1$, and 
\[ 2(1-\kappa_1) + (\gamma - \kappa_1)\zeta C_2 + 2\zeta C_1(1-\gamma) \leqslant 0 \eqsp.\]
Hence,
\begin{align*}
&|Q\varphi(x) - Q\varphi(y)|  \leqslant   2 + \zeta QV(x) + \zeta QV(y)  \leqslant  2 + \zeta \gamma V(x) + \zeta \gamma V(y) + 2\zeta C_1  \\
& \leqslant  \kappa_1 ( 2 + \zeta  V(x) + \zeta  V(y)) + 2(1-\kappa_1) + (\gamma - \kappa_1)\zeta ( V(x) + V(y)) + 2\zeta C_1\\
& \leqslant  \kappa_1 ( 2 + \zeta  V(x) + \zeta  V(y)) \eqsp.
\end{align*}

Second, consider the case where $V(x)+V(y) \leqslant C_2$. Let $( Z_x, Z_y)$ be an optimal coupling of $Q(x,\cdot)$ and $Q(y,\cdot)$. 
Then, writing $\kappa_2 = ( 1 - \alpha +  \zeta C_1(1-\gamma)/2)  \vee \gamma$ (which is smaller than 1 for $\zeta$ small enough),
\begin{align*}
  |Q\varphi(x) - Q\varphi(y)| & \leqslant    \mathbb P( Z_x \neq Z_y ) \expe{ |\varphi(Z_x) - \varphi(Z_y)| |\ Z_x \neq Z_y}\\
& \leqslant  \mathbb P( Z_x \neq Z_y ) ( 2 + \zeta \, \expe{ V(Z_x) + V(Z_y) }) \\
& \leqslant  2 (1-\alpha) + \zeta\gamma ( V(x) + V(y)) + \zeta C_1 (1-\gamma)\\
& \leqslant   \kappa_2 ( 2 + \zeta V(x) + \zeta V(y) ) \eqsp,
\end{align*}
which concludes the general proof.

For $C_2=4C_1$, we chose $\zeta = \frac{\alpha}{(1-\gamma)C_1}$, so that $\kappa_2 = 1 - \alpha/2$ and
\[\kappa_1 \ = \ \gamma + (1-\gamma)\parenthese{ 1 - \frac{\alpha}{1-\gamma + 2\alpha}} \ = \ 1 - \frac{\alpha (1-\gamma)}{1-\gamma + 2\alpha} \eqsp. \]
Using that, for $a,b>0$,
\[ \frac{ab}{a+b} \ = \ \frac{a\wedge b}{1 + \frac{a\wedge b}{a\vee b}} \ > \ \frac{a\wedge b}2 \eqsp,\]
we get
\[\kappa_1 \ \leqslant \  1 - \frac{(2\alpha)\wedge (1-\gamma)}{4} =  ( 1 - \alpha/ 2) \vee ( (3+\gamma)/4)  \eqsp . \]
\end{proof}

Remark that, under the same assumptions that \Cref{thm:ThmCVexpo} but with $\alpha=0$, the same proof yields, for all $\zeta>0$ and all $\mu_1,\mu_2\in\mathcal P_V( \msm)$,
\begin{equation}
\label{eq:rmq-rho-beta}
\rho_\zeta(\mu_1 Q,\mu_2 Q)  \leqslant  (1+\zeta C_1) \rho_\zeta(\mu_1,\mu_2) \eqsp.
\end{equation}

\end{document}